     \def\section{\@startsection{section}{1}%
     \z@{.7\linespacing\@plus\linespacing}{.5\linespacing}%
     {\bfseries
     \centering
     }}
     \def\@secnumfont{\bfseries}
\def\R{\mathbb{R}}
\def\P{\mathbb{P}}
\def\C{\mathbb{C}}
\def\wt{\widetilde}
\def\wh{\widehat}
\def\N{\mathbb{N}}
\def\H{\mathfrak{H}}
\def\F{\mathscr{F} }
 \def\E{\mathbb{E}}
\def\i{\mathbf{i}}
\def\to{\rightarrow}
\def\lmto{\longmapsto}
\def\bv{\big\vert}
 \def\e{\varepsilon}
\def\H{\mathfrak{H}}
\def\Var{{\rm Var}}
\newtheorem{theorem}{Theorem}[section]
\newtheorem{lemma}[theorem]{Lemma}
\newtheorem{proposition}[theorem]{Proposition}
\newtheorem{corollary}[theorem]{Corollary}
\newtheorem{definition}[theorem]{Definition}
\theoremstyle{remark}
\newtheorem{example}[theorem]{Example}
\theoremstyle{remark}
\newtheorem{remark}[theorem]{Remark}
\numberwithin{equation}{section}
\begin{document}

 \title[Averaging Gaussian functionals]{Averaging Gaussian functionals}


\author{David Nualart \and Guangqu Zheng}

\address{David Nualart: University of Kansas, Mathematics department, Snow Hall, 1460 Jayhawk blvd, Lawrence, KS 66045-7594, United States} \email{nualart@ku.edu} \thanks{D. Nualart is supported by NSF Grant DMS 1811181.}

\address{Guangqu Zheng: University of Kansas, Mathematics department, Snow Hall, 1460 Jayhawk blvd, Lawrence, KS 66045-7594, United States} \email{zhengguangqu@gmail.com}

 \subjclass[2010]{60H15, 60H07, 60G15, 60F05. }

\keywords{Breuer-Major theorem, Malliavin calculus, stochastic heat equation, Dalang's condition, Riesz kernel, central limit theorem. }

\maketitle

\vspace*{-1cm}

\begin{abstract}    This paper consists of two parts. In the first part, we focus on the average of a functional over shifted Gaussian homogeneous noise  and as the averaging domain covers the whole space,
we establish a Breuer-Major type Gaussian fluctuation based on   various assumptions on the covariance kernel and/or the spectral measure.  Our methodology for the first part begins with the application of  Malliavin calculus  around Nualart-Peccati's Fourth Moment Theorem, and in addition we apply the Fourier techniques as well as a soft approximation argument based on  Bessel functions of first kind.

   The same methodology leads us to investigate a closely related problem in the second part. We study the spatial average of a linear stochastic heat equation driven by space-time Gaussian colored noise.  The temporal  covariance kernel $\gamma_0$ is  assumed to be {\it locally integrable} in this paper.   If the spatial covariance kernel is  {\it nonnegative and integrable on the whole space}, then  the spatial average admits Gaussian fluctuation; with some extra mild integrability condition on $\gamma_0$, we are able to provide a functional central limit theorem.  These results complement recent studies on the spatial average for SPDEs.  Our analysis also allows us to consider the case where the spatial covariance kernel is not integrable: For example, in the case of  the Riesz kernel, the first chaotic component of the spatial average is dominant so that the Gaussian fluctuation also holds true.  
    \end{abstract}

\allowdisplaybreaks

\tableofcontents

\section{Introduction}

Motivated by the  Breuer-Major central limit theorem (CLT) \cite{BM83} and recent studies on the  spatial averages of SPDEs \cite{HNV18, HNVZ19, DNZ18},  we devote this paper to seeking   general conditions that lead to  the Gaussian fluctuations of averages of Gaussian functionals.

 Let us briefly introduce our framework. Let $W$ be a  $d$-dimensional homogenous Gaussian noise  with covariance kernel  $\gamma$,   that is, $W= \big\{ W(\phi) ,\phi\in C_c^\infty(\R^d) \big\}$  is a centered Gaussian family of real random variables,
defined on a probability space $(\Omega, \mathcal{F}, \P)$,  with covariance structure given by 
\begin{align}\label{nncov}
 \E\big[ W(\phi) W(\varphi) \big] = \int_{\R^{2d}} \phi(x) \varphi(y) \gamma(x-y)\, dx \, dy\,,~ \forall \phi, \varphi\in C_c^\infty(\R^d),
\end{align}
 where $\gamma:\R^d\to\R\cup\{+\infty\}$ is symmetric with  $\gamma^{-1}(\{ \infty\})\subset \{0\}$ and  
$
   \gamma(x) = (\F\mu)(x)= \int_{\R^d} e^{-\i x \cdot \xi} \mu(d\xi)
$  
for some  nonnegative  tempered measure $\mu$ on $\R^d$.  These assumptions on $\gamma$ ensure that \eqref{nncov} defines a nonnegative definite covariance functional and $\mu$ is known as the spectral measure. Notice that    $\gamma(0)\in\R$ is equivalent to the finiteness of $\mu(\R^d)$.

It is clear that \eqref{nncov} defines an inner product, under which the space $ C_c^\infty(\R^d)$ can be extended into a real   Hilbert space $\H$. Furthermore, the mapping $\phi\in C_c^\infty(\R^d)\mapsto W(\phi)$ extends to a linear isometry between $\H$ and the Gaussian Hilbert space spanned by $W$. We write 
$
W(\phi) = \int_{\R^d} \phi(x) \, W(dx)
$
and 
$\E[ W(\phi) W(\varphi) \big] = \langle \phi, \varphi \rangle_\H$, for any $\phi, \varphi\in\H$.
This gives us an {\it isonormal Gaussian process}  over $\H$.

Now consider a real random variable $F\in L^2(\Omega)$ that is measurable with respect to $W$ and has the following Wiener chaos expansion:
\begin{align}\label{Fchaos}
F(W) = \E[F] + \sum_{p\geq 1} I_p^W(f_p)\,,
\end{align}
where $I_p^W(\cdot)$ denotes the $p$th multiple stochastic integral with respect to $W$ and $f_p$ belongs to the  symmetric subspace $\H^{\odot p}$ of the $p$th tensor product $\H^{\otimes p}$,   $\forall p\in\N$; see  \cite{Nualart06} for more details. Along the paper we will denote by $\Pi_p F$ the orthogonal projection of $F$ onto the $p$th Wiener chaos.

 In order to formulate our results, we need to introduce   the {\it spatial shifts} $\{U_x, x\in\R^d\}$. For each $x\in\R^d$ and $F$ given as in \eqref{Fchaos}, $U_xF$ is defined by 
 \begin{align}
U_xF :=  \E[F] + \sum_{p\geq 1} I_p^W(f_p^x), \label{Fchaosx}
 \end{align}
 with\footnote{  For   a generalized function $f\in\H$, we can define $f^x$ as follows.  Let $\{f_n, n\in\N\}\subset C^\infty_c(\R^d)$ be an approximating sequence of $f$ in $\H$, we can define $f_n^x$ for each $n\in\N$ and  $f^x$ to be the limit of the Cauchy sequence $\{ f_n^x,n\in\N\}$ in $\H$.  It is routine to verify that the definition of $f^x$ does not depend on the particular choice of the approximating sequence.   } $ f_p^x(y_1,\ldots, y_p) = f_p(y_1-x,\ldots, y_p-x)$ for any $x, y_1,\ldots, y_p\in\R^d$ and $p\in\N$. Here is another look at the above definition.
For any $x\in \R^d$  and any $\varphi\in C_c^\infty(\R^d)$, we write   
$ \varphi^x(y) =\varphi(y-x)$ and we  introduce $W_x$,  the shifted Gaussian field,  defined by $W_x(\phi) = W(\phi^x)$, for any $\phi\in C^\infty_c(\R^d)$, and by extension for any $\phi\in\H$.
The family $W_x$ has the same covariance structure as $W$ and  the associated multiple stochastic integrals satisfy
$
I^{W_x}_p(f)  = I_p^W(f^x)
$
for any    $f\in \H^{\odot p} $, so that $U_xF(W) = F(W_x)$ shall give us \eqref{Fchaosx}.

\medskip

Let $F$ be given as in \eqref{Fchaos}. We are interested in the spatial averages of $U_xF$  over $B_R=\{ x\in\R^d: \| x \| \leq R \}$,
with the particular aim at general conditions on the kernels $\{f_p,p\in\N\}$ and the covariance kernel $\gamma$ (and/or the associated spectral measure $\mu$) that imply
\begin{align}\label{Q0}
\frac{1}{\sigma(R)}\int_{B_R} U_xF \, dx \xrightarrow[R\to+\infty]{\rm law} N(0,1)\,,
\end{align}
where $\sigma(R)$ is a normalization constant and $N(m, v^2)$ stands for a real normal distribution with mean $m$ and variance $v^2$.

\smallskip

To illustrate how this spatial averaging is related to the aforementioned Breuer-Major theorem and to give a flavor of our results, we provide below a particular case (see Example \ref{ex12})  and refer to Section \ref{sec2} for more general results.   Let us first recall the continuous-time Breuer-Major theorem (in a slightly different form).

\begin{theorem}\label{thm0}   Suppose $g\in L^2(\R, e^{-x^2/2}dx)$ has the following orthogonal expansion in Hermite polynomials $\{ H_p = (-1)^p e^{x^2/2} \frac{d^p}{dx^p}e^{-x^2/2}, p\in\N  \}:$
\[
\qquad\qquad\qquad g = \sum_{p\geq m} c_p H_p \quad\text{with $c_m\neq 0$, $m\geq 1$ known as the Hermite rank of $g$}.
\]
Let $Y=\{Y_x,x\in\R^d\}$ be a centered Gaussian stationary process with covariance function $\E[Y_a Y_b ]= \rho(   a-b  )$ such that $\rho(0)=1$. Under the condition $\rho\in L^m(\R^d, dx)$, we have
\[
R^{-d/2} \int_{B_R} g(Y_x)\, dx \xrightarrow[R\to+\infty]{\rm law}N(0,\sigma^2) \,,
\]
 with $ \sigma^2:= \omega_d\sum_{q\geq m}c_q^2 q! \int_{\R^d} \rho(x)^m~dx\in[0,\infty)$,  $\omega_d$ being the volume of $B_1$; see also \cite{CNN18, NZ19}.   
\end{theorem}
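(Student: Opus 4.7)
The plan is to linearize $g(Y_x)$ via its Hermite expansion and then apply the Nualart--Peccati Fourth Moment Theorem chaos by chaos, combined with a standard $L^2$-truncation. Let $\H$ be a real separable Hilbert space carrying a family $\{h_x : x\in\R^d\}\subset\H$ with $\langle h_x, h_y\rangle_\H = \rho(x-y)$, and let $W$ be the isonormal Gaussian process over $\H$ realizing $Y_x = W(h_x)$. The normalization $\rho(0)=1$ forces $\|h_x\|_\H = 1$, and the classical identity $H_q(Y_x) = I_q^W(h_x^{\otimes q})$ yields the Wiener chaos decomposition
\[
F_R := R^{-d/2}\int_{B_R} g(Y_x)\, dx = \sum_{q\geq m} I_q^W(g_{q,R}), \qquad g_{q,R} := c_q R^{-d/2}\int_{B_R} h_x^{\otimes q}\, dx.
\]

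A direct second moment computation gives
\[
\E\big[I_q^W(g_{q,R})^2\big] = c_q^2\, q!\, R^{-d}\int_{B_R\times B_R}\rho(x-y)^q\, dx\, dy \xrightarrow[R\to\infty]{} c_q^2\, q!\, \omega_d \int_{\R^d} \rho(z)^q\, dz,
\]
using the bound $|\rho|\leq\rho(0)=1$ (so $\rho\in L^m$ implies $\rho\in L^q$ for every $q\geq m$) together with dominated convergence; summing over $q$ recovers the stated $\sigma^2$ (read with exponent $q$ rather than $m$ inside the integral). The hard step is the chaos-wise CLT: for each fixed $q\geq m$, I would apply the Fourth Moment Theorem to $I_q^W(g_{q,R})$ and verify that every contraction norm $\|g_{q,R}\otimes_r g_{q,R}\|_{\H^{\otimes(2q-2r)}}$, with $1\leq r\leq q-1$, tends to $0$. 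Its square unfolds into the quadruple integral
\[
c_q^4\, R^{-2d}\int_{B_R^4}\rho(x_1-x_2)^{q-r}\rho(x_3-x_4)^{q-r}\rho(x_1-x_3)^{r}\rho(x_2-x_4)^{r}\, dx_1\, dx_2\, dx_3\, dx_4,
\]
which I would bound by iterated applications of Young's convolution inequality together with $|\rho|^q\leq|\rho|^m\in L^1(\R^d)$, the residual factor $R^{-d}$ producing the desired vanishing after the spatial averaging over $B_R$. This contraction estimate is the technical heart of the proof. Joint convergence of the vector $(I_q^W(g_{q,R}))_{m\leq q\leq Q}$ to independent centered Gaussians then follows from the Peccati--Tudor criterion.

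For the $L^2$-truncation, the elementary change of variable $z=x-y$ combined with $|\rho|\leq 1$ yields the uniform bound $\|g_{q,R}\|_{\H^{\otimes q}}^2 \leq c_q^2\, \omega_d\int_{\R^d}|\rho(z)|^m\, dz$, so that
\[
\sup_R \sum_{q>Q}\E\big[I_q^W(g_{q,R})^2\big] \leq \omega_d\Big(\int_{\R^d}|\rho|^m\Big)\sum_{q>Q} c_q^2\, q!,
\]
which vanishes as $Q\to\infty$ because $\sum_q c_q^2 q!<\infty$ is equivalent to $g\in L^2(\R, (2\pi)^{-1/2}e^{-x^2/2}dx)$. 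Combining the truncated joint CLT with this uniform $L^2$-tail control produces $F_R \xrightarrow[R\to\infty]{\rm law} N(0,\sigma^2)$, which is the desired conclusion.
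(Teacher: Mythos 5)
Your proposal is correct in outline and follows exactly the route the paper itself uses: the paper does not actually prove Theorem \ref{thm0} (it recalls it and cites \cite{CNN18, NZ19}), but its Theorems \ref{thm00}, \ref{thm1} and \ref{CCLT1} are established by precisely your scheme --- chaos expansion, fourth-moment/contraction criterion on each fixed chaos, and a uniform $L^2$ tail bound --- and the Breuer--Major setting corresponds to the rank-one tensor kernels $g_{q,R}=c_qR^{-d/2}\int_{B_R}h_x^{\otimes q}\,dx$ treated in Theorem \ref{thm1} (see also Example \ref{ex12} and Remark \ref{rem66}). Your second-moment computation, your observation that the limiting variance should carry the exponent $q$ rather than $m$ inside the integral (a typo in the statement as printed), and your tail estimate via $\sum_q c_q^2\,q!=\|g\|^2_{L^2(\R,e^{-x^2/2}dx)}<\infty$ are all correct.

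The one step you should not wave at is the contraction estimate, because the tools you name do not quite close it as stated. After the change of variables $z_1=x_1-x_2$, $z_2=x_3-x_4$, $z_3=x_1-x_3$, the squared contraction norm is bounded by
\[
C\,R^{-d}\int_{B_{2R}^3}|\rho(z_1)|^{q-r}\,|\rho(z_2)|^{q-r}\,|\rho(z_3)|^{r}\,|\rho(z_2+z_3-z_1)|^{r}\,dz_1\,dz_2\,dz_3,
\]
and H\"older (equivalently, the endpoint case of Young) in $z_1$ with exponents $q/(q-r)$ and $q/r$ leaves $C\,R^{-d}\,\|\rho\|_{L^q}^{q}\bigl(\int_{B_{2R}}|\rho|^{q-r}\bigr)\bigl(\int_{B_{2R}}|\rho|^{r}\bigr)$. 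A further crude H\"older bound on each remaining factor only gives $O\bigl(R^{d r/q}\bigr)\cdot O\bigl(R^{d(1-r/q)}\bigr)=O(R^{d})$, i.e.\ boundedness rather than decay. The missing ingredient is the refinement that $\int_{B_R}|\phi|^{r}=o\bigl(R^{d(1-r/q)}\bigr)$ whenever $\phi\in L^{q}$ and $0<r<q$ --- exactly the paper's Lemma \ref{need001} --- which, applied to each of the two surviving factors, upgrades $O(R^{d})$ to $o(R^{d})$ and makes the contractions vanish. With that lemma inserted, your argument is complete and coincides with the paper's proof of Theorem \ref{thm1}.
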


\begin{example}\label{ex12} Now fix a unit vector $e\in\H$ and put $F = g\big(W(e)\big)$, then $U_x F = g\big(W_x(e)\big)=g\big(Y_x\big)$, with $Y_x = W(e^x)$. If $g\in L^2(\R, e^{-x^2/2}dx)$ has Hermite rank $m\geq 1$ and 
\[
\int_{\R^d} \left\vert \int_{\R^{2d}} e(a) e(b) \gamma(a-b-x) dadb \right\vert^m\, dx < +\infty \,,
\]
then Theorem \ref{thm0} produces an example of \eqref{Q0}. Note that in this example, the Gaussian functional $F = g\big(W(e)\big)$ depends only on one coordinate while our principal concern  is for Gaussian functionals that may depend on infinitely many coordinates.
\end{example}

\medskip

Recall the chaos expansions \eqref{Fchaos} and \eqref{Fchaosx}, and from now on, we consider the  case where  $F$ has {\it Hermite rank} $m\geq 1$, meaning that:
\begin{center}
$\E[F]=0$, $\{f_j, j=1,\ldots, m-1\}$ are zero vectors and    $f_m\in\H^{\odot m}$ is nonzero.
\end{center}
In this case, we write 
\[
\int_{B_R} U_xF ~ dx = \sum_{p\geq m} I_p^W\big( g_{p,R}\big) ~\text{with}~ g_{p,R}=\int_{B_R} f_p^x~ dx ~ \text{for each $p\geq m$.}
\]
In view of Hu and Nualart's chaotic central limit theorem \cite{HN05}, based on the Fourth Moment Theorems of Nualart, Peccati and Tudor \cite{FMT, PTudor},  it is enough to look for conditions that guarantee the central limit theorem on each fixed chaos, provided one has some uniform control of the variance of each chaotic component. More precisely,  we have the following general result. 

\begin{theorem}\label{thm00} Consider a sequence of centered square integrable random variables $(F_n, n\in\N)$ with Wiener chaos expansions $F_n = \sum_{q\geq 1} I^W_q(f_{q,n})$, where $f_{q,n}\in\H^{\odot q}$ for each $q,n\in\N$. Suppose that:
\begin{itemize}
\item[(i)]  $\forall q\geq 1$, $q! \| f_{q,n} \|^2_{\H^{\otimes q}}\to \sigma_q^2$, as $n\to+\infty$;

\item[(ii)] $\forall q\geq 2$ and  $\forall r\in\{1, \ldots, q-1\}$, $\| f_{q,n}  \otimes_r f_{q,n} \| _{\H^{\otimes (2q-2r)}} \to 0$, as $n\to+\infty$;

\item[(iii)]  $\lim_{N\to+\infty} \limsup_{n\to+\infty} \sum_{q\geq N} q! \| f_{q,n}\| _{\H^{\otimes q}}^2 = 0 \,.$

\end{itemize}
Then, as $n\to\infty$, $F_n$ converges in law to $N(0, \sigma^2)$, with $\sigma^2 = \sum_{q\geq 1} \sigma_q^2$. 
\end{theorem}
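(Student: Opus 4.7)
The plan is to combine the Nualart--Peccati Fourth Moment Theorem \cite{FMT} with the Peccati--Tudor multivariate extension \cite{PTudor} and a standard truncation argument based on condition (iii). The truncation argument transfers the problem from the infinite sum $F_n$ to a finite chaos sum $F_n^{(N)}:=\sum_{q=1}^{N} I_q^W(f_{q,n})$, which is amenable to the multivariate Fourth Moment machinery.

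First, I fix $N\geq 1$ and study the vector
\[
V_n^{(N)} := \bigl(I_1^W(f_{1,n}),\, I_2^W(f_{2,n}),\,\ldots,\, I_N^W(f_{N,n})\bigr).
\]
By hypothesis (i), the diagonal of its covariance matrix converges to $(\sigma_1^2,\ldots,\sigma_N^2)$, while the off-diagonal entries vanish identically because multiple integrals of different orders are orthogonal in $L^2(\Omega)$. Thus the covariance matrix of $V_n^{(N)}$ converges to $\mathrm{diag}(\sigma_1^2,\ldots,\sigma_N^2)$. Condition (ii) tells us that for each $q=2,\ldots,N$ and each $r\in\{1,\ldots,q-1\}$ the contraction norms $\|f_{q,n}\otimes_r f_{q,n}\|_{\H^{\otimes(2q-2r)}}$ tend to zero. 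By the Peccati--Tudor theorem, the conjunction of these two facts implies that $V_n^{(N)}$ converges in law to a centered Gaussian vector with covariance matrix $\mathrm{diag}(\sigma_1^2,\ldots,\sigma_N^2)$. Consequently, by the continuous mapping theorem,
\[
F_n^{(N)} \xrightarrow[n\to+\infty]{\mathrm{law}} N\!\left(0,\, \sum_{q=1}^{N} \sigma_q^2\right).
\]

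Second, I control the tail. By orthogonality of Wiener chaoses,
\[
\E\bigl[(F_n - F_n^{(N)})^2\bigr] = \sum_{q>N} q!\,\|f_{q,n}\|_{\H^{\otimes q}}^2,
\]
and hypothesis (iii) ensures that the $\limsup_{n\to\infty}$ of this quantity tends to $0$ as $N\to+\infty$. Combined with (i), this also guarantees $\sum_{q\geq 1}\sigma_q^2 = \sigma^2 < +\infty$, so $N(0,\sum_{q=1}^N \sigma_q^2)$ converges weakly to $N(0,\sigma^2)$ as $N\to\infty$.

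Finally, I conclude via a standard $3$-step approximation: pick any bounded Lipschitz test function $\phi$, and use the triangle inequality
\[
\bigl|\E[\phi(F_n)] - \E[\phi(N(0,\sigma^2))]\bigr| \leq \bigl|\E[\phi(F_n)-\phi(F_n^{(N)})]\bigr| + \bigl|\E[\phi(F_n^{(N)})] - \E[\phi(Z_N)]\bigr| + \bigl|\E[\phi(Z_N)] - \E[\phi(N(0,\sigma^2))]\bigr|,
\]
where $Z_N\sim N(0,\sum_{q=1}^N\sigma_q^2)$. The first term is bounded by $\|\phi\|_{\mathrm{Lip}}\,\E[(F_n-F_n^{(N)})^2]^{1/2}$, whose $\limsup_{n}$ vanishes as $N\to\infty$ by (iii); the second vanishes as $n\to\infty$ for each fixed $N$ by the Peccati--Tudor step; the third vanishes as $N\to\infty$. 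Taking $n\to\infty$ and then $N\to\infty$ yields the claim.

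The only delicate point is the joint Gaussian limit of $V_n^{(N)}$: the Peccati--Tudor theorem requires precisely the componentwise contraction conditions in (ii) together with the covariance convergence, so no cross-chaos contractions need to be controlled. The rest is the usual tightness-via-tail-control bookkeeping afforded by (iii).
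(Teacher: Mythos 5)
Your proof is correct and is precisely the standard argument for the chaotic central limit theorem: truncation at level $N$, the Peccati--Tudor multivariate Fourth Moment Theorem applied to the vector of the first $N$ chaos components (where orthogonality of distinct chaoses makes the limiting covariance diagonal), and a three-term approximation controlled by condition (iii). The paper itself gives no proof and simply refers to \cite{bluebook, Eulalia}, where this is exactly the argument used, so there is nothing to add.
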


We refer to \cite{bluebook, Eulalia} for more details  on this result and to  Section \ref{sec2} for the definition of   the $r$-contraction $\otimes_r$.
 
Now let us look at the  central limit theorem  on each chaos.  We fix an integer $p\geq 2$ and put 
$$G_{p,R} =  I_p^W\big( g_{p,R}\big)$$ with
$\sigma_{p,R}^2: = \Var\big( G_{p,R} \big)$.  Assume  $\sigma_{p,R}>0$ for large $R$, then according to
     the Fourth Moment Theorem of Nualart and Peccati \cite{FMT}, we know that
\[
\frac {G_{p,R}} { \sigma_{p,R}}      \xrightarrow[R\to+\infty]{\rm law } N(0,1)
\]
 if and only if 
 \begin{align}  \label{contractions}
  \quad\lim_{R\to+\infty}  \frac{1}{\sigma^2_{p,R}}  \sum_{r=1}^{p-1} \left\| g_{p,R} \otimes_r g_{p,R} \right\|_{\H^{\otimes(2p-2r)}} = 0 \,.
\end{align}
 Moreover, we have the  following rate of convergence in the total variation distance, as a consequence of the Nourdin-Peccati bound  (see \cite[Chapter 5]{bluebook}):
\begin{equation} \label{dist}
d_{\rm TV}\left( \frac {G_{p,R}} { \sigma_{p,R}}, N(0,1)\right) \le \frac{C}{\sigma_{p,R}^2} \sum_{r=1} ^{p-1} \left\| g_{p,R} \otimes_r g_{p,R} \right\|_{\H^{\otimes(2p-2r)}} \,.
 \end{equation} Throughout this paper, we write $C$ for   immaterial constants that may vary from line to line. 
 
 In the first part of this paper (Section \ref{sec2}),  we will exploit the above ideas to derive sufficient conditions for \eqref{Q0} to hold, {\it with $\sigma(R)$ growing like $C R^{d/2}$}. Note that the order of $\sigma(R)$ matches the result in Theorem \ref{thm0}. Without introducing further notation,  we provide    another   example of \eqref{Q0}, which is a corollary of our main result (Theorem \ref{CCLT1});  see Remark \ref{rem66}.

\begin{theorem}\label{cor0}  Let the above notation prevail. Suppose $\gamma(0)\in(0,+\infty)$ and $\gamma\in L^m(\R^d, dx)$, where $m\geq 1$ is the Hermite rank of $F$. If we assume in addition that the kernels $f_p\in L^1(\R^{pd})\cap \H^{\odot p}$, $p\geq m$,  satisfy
\begin{align}\label{gamma0}
\sum_{p\geq m} p! \gamma(0)^{p} \| f_p\|_{L^1(\R^{pd})}^2 < +\infty\,,
\end{align}
then, 
${\displaystyle
R^{-d/2}\int_{B_R} U_x F \, dx  \xrightarrow[R\to+\infty]{\rm law} N(0, \sigma^2),
}$
with 
$$ \sigma^2= \omega_d \sum_{p\geq m} p!  \int_{\R^{2dp}} f_p(\pmb{s_p}) f_p(\pmb{t_p}) \left(\int_{\R^d} \prod_{j=1}^p \gamma(t_j-s_j+z)~dz\right) d\pmb{s_p} d\pmb{t_p}\in[0,\infty)$$
with  $\pmb{s_p}=(s_1, \dots, s_p)$, $d\pmb{t_p} =  d t_1 \cdots dt_p$ and $\omega_d$ being the volume of $B_1=\{ \| x\| \leq 1 \}$.
\end{theorem}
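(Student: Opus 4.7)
The plan is to apply the chaotic CLT (Theorem \ref{thm00}) along any sequence $R_n\to\infty$ to the normalized averages
$$F_n \;=\; R_n^{-d/2}\int_{B_{R_n}} U_xF\,dx \;=\;\sum_{q\ge m} I_q^W\bigl(f_{q,n}\bigr), \qquad f_{q,n}:=R_n^{-d/2}g_{q,R_n},$$
so the task reduces to verifying conditions (i), (ii), (iii) of Theorem \ref{thm00} for the kernels $f_{q,n}$.

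For (i) and (iii) I would start from the translation-invariance identity
$$\|g_{p,R}\|_{\H^{\otimes p}}^2 \,=\,\int_{B_R\times B_R}\!\!\langle f_p^x,f_p^y\rangle_{\H^{\otimes p}}\,dx\,dy \,=\,\int_{\R^d}\lvert B_R\cap(B_R+z)\rvert\,h_p(z)\,dz,$$
where, after the substitutions $s_j\mapsto s_j-x$ and $t_j\mapsto t_j-y$,
$$h_p(z):=\int_{\R^{2dp}} f_p(\pmb{s_p})\,f_p(\pmb{t_p})\prod_{j=1}^p\gamma(s_j-t_j+z)\,d\pmb{s_p}\,d\pmb{t_p}.$$
Using $|\gamma|\le\gamma(0)$ to bound $p-m$ of the factors and generalized H\"older (with common exponent $m$) on the remaining $m$ factors yields the key estimate
$$\int_{\R^d}|h_p(z)|\,dz \,\le\, \gamma(0)^{p-m}\,\|\gamma\|_{L^m}^m\,\|f_p\|_{L^1(\R^{pd})}^{2}.$$
Combining this with $R^{-d}\lvert B_R\cap(B_R+z)\rvert\to\omega_d$ pointwise and dominated by $\omega_d$, dominated convergence gives
$$p!\,R^{-d}\|g_{p,R}\|_{\H^{\otimes p}}^2\;\longrightarrow\; p!\,\omega_d\int_{\R^d}h_p(z)\,dz\,=:\,\sigma_p^2,$$
which is condition (i); the same bound provides the uniform-in-$R$ estimate $p!\,R^{-d}\|g_{p,R}\|^2_{\H^{\otimes p}}\le \omega_d\gamma(0)^{-m}\|\gamma\|_{L^m}^m\cdot p!\gamma(0)^p\|f_p\|_{L^1}^2$, so condition (iii) follows directly from the tail of the series in \eqref{gamma0}.

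The main obstacle is condition (ii): for fixed $p\ge m$ and $1\le r\le p-1$ one must establish $R^{-d}\|g_{p,R}\otimes_r g_{p,R}\|_{\H^{\otimes(2p-2r)}}\to 0$. Expanding $g_{p,R}=\int_{B_R}f_p^x\,dx$ in each of the four copies of $g_{p,R}$ that appear inside the squared contraction norm rewrites $\|g_{p,R}\otimes_r g_{p,R}\|_{\H^{\otimes(2p-2r)}}^2$ as a quadruple integral over $(x_1,x_2,x_3,x_4)\in B_R^4$ of inner products $\langle f_p^{x_1}\otimes_r f_p^{x_2},\,f_p^{x_3}\otimes_r f_p^{x_4}\rangle$, which by translation invariance depend only on the differences $x_i-x_j$ and equal an integral of $f_p^{\otimes 4}$ against a product of $2p$ shifted copies of $\gamma$. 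The change of variables $x_i\mapsto x_i-x_1$ for $i=2,3,4$ extracts one factor $|B_R|=\omega_d R^d$ from the $x_1$-integration, so (ii) reduces to showing that the remaining integral over $(x_2,x_3,x_4)\in\R^{3d}$ is uniformly bounded in $R$. I would then apply Fubini and perform the $x_i$-integrations first: the pairing pattern of the $r$-contraction distributes the differences $x_i-x_j$ among the $2p$ factors of $\gamma$, and selecting $m$ of them to absorb into $\|\gamma\|_{L^m}^m$ via generalized H\"older while bounding the remaining $2p-m$ factors by $\gamma(0)$, followed by Fubini against $f_p\in L^1$, should produce $\|g_{p,R}\otimes_r g_{p,R}\|^2\le C_{p,r}R^d$ and hence $R^{-d}\|g_{p,R}\otimes_r g_{p,R}\|\le\sqrt{C_{p,r}}\,R^{-d/2}\to 0$.

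The bookkeeping in this last step---identifying which $x_i-x_j$ differences appear in which $\gamma$-factors according to the pairing induced by the $r$-contraction, and distributing the $\|\gamma\|_{L^m}^m$ and $\gamma(0)^{2p-m}$ budgets so that the final Fubini against $f_p\in L^1$ closes the estimate---is where I expect the bulk of the technical work to lie. Once (ii) is secured, Theorem \ref{thm00} immediately delivers the desired CLT with limiting variance $\sigma^2=\sum_{p\ge m}\sigma_p^2$, matching the expression stated in the theorem; the fact that the variance is finite is itself a byproduct of the bound used for condition (iii).
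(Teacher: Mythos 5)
Your overall architecture (reduce to Theorem \ref{thm00}; verify (i) and (iii) via the translation--invariance identity and the bound $\int_{\R^d}\prod_{j=1}^p|\gamma|(c_j+z)\,dz\le\gamma(0)^{p-m}\|\gamma\|_{L^m(\R^d)}^m$) is sound and matches the paper's route through hypothesis {\bf (H4)} and Theorem \ref{CCLT1}. The genuine gap is in your treatment of condition (ii). After extracting one factor of $R^d$ from the $B_R^4$-integral, the remaining integral runs over three difference variables, say $z_1=x_1-x_2$, $z_2=x_3-x_4$, $z_3=x_2-x_4$, each ranging over $B_{2R}$, and the $2p$ shifted copies of $\gamma$ split into four groups: $r$ factors shifted by $z_1$, $r$ by $z_2$, $p-r$ by $z_3$, and $p-r$ by $z_2-z_1-z_3$. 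This triple integral is \emph{not} uniformly bounded in $R$ in general, and your proposed bookkeeping cannot make it so: a budget of $m$ H\"older'd factors plus $2p-m$ factors bounded by $\gamma(0)$ can control the integration in at most one of the three free variables, and any free variable all of whose attached $\gamma$-factors have been replaced by the constant $\gamma(0)$ contributes a full $|B_{2R}|\sim R^d$. Even distributing the factors optimally, the $z_2$-integral sees only $r$ copies of $\gamma$ and the $z_3$-integral only $p-r$ copies; when $r<m$ (or $p-r<m$) and $\gamma\notin L^1$ — e.g.\ $\gamma(z)=(1+\|z\|)^{-\beta}$ with $d/2<\beta<d$, $m=2$, $p=3$, $r=1$ — one gets $\int_{B_{2R}}|\gamma|\sim R^{d-\beta}\to\infty$, and with nonnegative kernels the squared contraction norm is genuinely of order $R^{2d-\beta}\gg R^d$. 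So the claimed estimate $\|g_{p,R}\otimes_r g_{p,R}\|^2\le C_{p,r}R^d$ is false.

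What saves the argument — and what the paper does in the proof of Theorem \ref{thm1} under condition (ii) — is that you only need $\|g_{p,R}\otimes_r g_{p,R}\|^2=o(R^{2d})$, not $O(R^d)$. Integrate first in $z_1$, whose $p$ attached factors (the $r$ from the first group and the $p-r$ from the fourth) give $\|\gamma\|_{L^p(\R^d)}^p<\infty$ by H\"older, using $\|\gamma\|_{L^p}^p\le\gamma(0)^{p-m}\|\gamma\|_{L^m}^m$; then bound the remaining two integrals by generalized H\"older in terms of $\int_{B_{2R}}|\gamma|^r$ and $\int_{B_{2R}}|\gamma|^{p-r}$. By Lemma \ref{need001} (applied with exponent $p$) these are $o\bigl(R^{d(1-r/p)}\bigr)$ and $o\bigl(R^{dr/p}\bigr)$ respectively, so their product is $o(R^d)$; after multiplying by the extracted $R^d$, dominating against $|f_p|^{\otimes 3}\in L^1(\R^{3pd})$ and applying dominated convergence, condition (ii) follows. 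With this correction your proof closes and coincides in substance with the paper's.
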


One may want to compare our Theorem \ref{cor0} with Theorem \ref{thm0} and Example \ref{ex12}. We refer the readers to Section \ref{sec2} for more results with this flavor and here we briefly give a literature overview:

\begin{enumerate}
\item To the best of our knowledge,  problem \eqref{Q0} first received   attention in the  1976 paper \cite{Maruyama76} by Maruyama, using  the method of moments.  Proofs and  extensions of Maruyama's CLT were published in his 1985 paper \cite{Maruyama85}.

\item In 1983, Breuer and  Major provided a  CLT  \cite{BM83}, motivated by  the non-central limit theorems of Dobrushin, Major, Rosenblatt and Taqqu during  1977-1981 (see \cite{DM79,Major81,Rosenblatt81,Taqqu77,Taqqu79}). Unlike these works,  Breuer and Major were interested at the asymptotic normality of nonlinear functionals over stationary  Gaussian  fields when the corresponding  correlation function decay fast enough.
Although Breuer-Major's theorem (see Theorem \ref{thm0}) takes a simpler form compared to Maruyama's CLT,  it has found a tremendous amount of applications in theory and practice.

\item Chambers and Slud  established further extensions to Maruyama's CLT in \cite{CS89} and   obtained  the Breuer-Major theorem as a corollary (when assuming the existence of spectral density). In both \cite{CS89} and Maruyama's work \cite{Maruyama76, Maruyama85} , the story always begins with  a real stationary Gaussian process with time-shifts $\{U_s, s\in\R\}$ and they formulated the chaos expansion based on the spectral (probability) measure.  

\item  In the present work, we  provide sufficient conditions for \eqref{Q0} in terms of the spectral measure. Comparing our assumptions based on the spectral measure with those in \cite{CS89}, both sets of assumptions  essentially cover our Theorem \ref{cor0} as a   particular case, while they are different in their full generality. Moreover, we also provide sufficient conditions for \eqref{Q0} in terms of the covariance kernel. 

\end{enumerate}

Our methodology from the first part can be applied to the study of  spatial averages of the  stochastic heat equation driven by Gaussian colored noise and this constitutes the   second part of our paper. More precisely, we consider the following   stochastic heat equation with a   multiplicative   Gaussian colored  noise on $\R_+\times\R^d$:
\begin{equation}  \label{pl1}
 \dfrac {\partial  u}{\partial t} = \dfrac{1}{2} \Delta u + u   \dot{W} 
 \end{equation}
where   the Laplacian $\Delta=\sum_{i=1}^d \partial_{x_i}^2$ concerns only spatial variables and the initial condition is fixed to be $u_{0,x}\equiv 1$.

The notation $\dot{W}$ stands for $\frac {\partial^{d+1} W}{ \partial t \partial x_1 \cdots  \partial x_d}$  and the noise $ W$ is formally defined
as a centered  Gaussian family   $\big\{ W(\phi), \phi \in C^\infty_c(\R_+\times\R^d)\big\}$,  with covariance structure
\begin{align}
\E [  W(\phi) W(\psi)  ]  & = \int_{\R_+^2}dsdt  \gamma_0(t-s)    
\langle \phi(s, \bullet), \gamma_1 * \psi(t,\bullet) \rangle_{L^2(\R^d)}
 \notag \\
&= \int_{\R_+^2}dsdt  \gamma_0(t-s)   \int_{\R^{d}} \mu_1(d\xi) \F \phi(s,\xi)  \F\psi(t,-\xi)  \,, \label{COVst}
\end{align}
for any $\phi,\psi\in C^\infty_c(\R_+\times\R^d)$,
where    $\F$ denotes the Fourier transform  with respect to the spatial variables and the following two conditions are satisfied:
\begin{enumerate}
\item $\gamma_0:\R\to[0,\infty]$ is  {\it locally  integrable} and nonnegative-definite,
\item  $\gamma_1$ is a measure, such that $\gamma_1 = \F \mu_1$ for some nonnegative tempered measure $\mu_1$, called the spectral measure,  satisfying   Dalang's condition (see \emph{e.g.} \cite{Dalang99})
\begin{equation} \label{Da}
\int_{\R^d} \frac{\mu_1(d\xi)}{1 + \| \xi \|^2} < +\infty.
\end{equation}
\end{enumerate}
If $\gamma_1$ is absolutely continuous with respect to the Lebesgue measure on $\R^d$, we still denote by $\gamma_1$ its density and then
\[
\langle \phi(s, \bullet), \gamma_1 * \psi(t,\bullet) \rangle_{L^2(\R^d)}
= \int_{R^{2d}} \phi(s,x) \gamma_1(x-y) \psi(t,y)dx dy.
\]
We will use this notation even if $\gamma_1$ is a measure.  The basic example is $d=1$ and $\gamma_1 =\delta_0$ and   in this case $\mu_1$ is $(2\pi)^{-1}$ times Lebesgue measure.

\smallskip
We   point out that \eqref{COVst} defines an inner product, under which $C^\infty_c(\R_+\times\R^d)$ can be extended into a Hilbert space $\mathscr{H}$. As we did before, we can build an isonormal process  $\{W(h), h\in\mathscr{H} \}$ from $\{W(h), h\in C^\infty_c(\R_+\times\R^d)  \}$.   We denote by $I_p^W(f)$  the  $p$th multiple integral of  a symmetric element $f\in\mathscr{H}^{\odot p}$.  For general $f\in\mathscr{H}^{\otimes p}$, we denote by $\wt{f}$ the canonical symmetrization of $f$, that is, 
 \begin{align*} 
\wt{f}(s_1, y_1, s_2, y_2, \ldots, s_p, y_p) = \frac{1}{p!} \sum_{\sigma\in \mathfrak{S}_p} f\big(s_{\sigma(1)}, y_{\sigma(1)}, \ldots, s_{\sigma(p)}, y_{\sigma(p)}\big),
\end{align*}
where the sum runs over the permutation group $ \mathfrak{S}_p$ over $\{1,  \dots,p\}$.  Quite often in this paper, we write $f(\pmb{s_p}, \pmb{y_p})$   for $f(s_1,y_1, \ldots, s_p, y_p)$, whenever it is convenient.

For each $t\geq 0$,  let  $\mathcal{F}_t$ be the $\sigma$-algebra generated by $\big\{W(\phi):\phi$ is continuous with support contained  in $[0,t]\times \R^d\big\}$. We say that a random field $u= \{ u_{t,x}, (t,x)\in\R_+\times\R^d\}$ is 
adapted if for each $(t,x)$, the random variable $u_{t,x}$ is $\mathcal{F}_t$-measurable.

\medskip

We interpret equation  \eqref{pl1} in the  Skorokhod  sense and recall the   definition of mild solution from  \cite[Definition 3.1]{HHNT15}.

\begin{definition}  An adapted random field $u = \big\{ u_{t,x},  t\geq 0, x\in\R^d\big\}$ such that $\E\big[ u_{t,x}^2 \big]<+\infty$ for all $(t,x)$ is said to be a {\it  mild solution} to  equation \eqref{pl1} with  initial conditoin $u_{0,\cdot}=1$, if for any $t\in\R_+$, $x\in\R^d$, the process $\{ G(t-s, x-y) u_{s,y}{\bf 1}_{[0,t]}(s): s\geq 0, y\in\R^d \}$ is Skorokhod integrable and 
\begin{align*}
u_{t,x} = 1 + \int_0^t  \int_{\R^d} G(t-s, x-y)u_{s,y}  W(ds,dy) \, ,
\end{align*}
where $G(t,x) = (2\pi t)^{-d/2} \exp\big( - \| x \| ^2/ (2t) \big)$ for $t>0$ and $x\in\R^d$.
\end{definition}

 The above stochastic heat equation  has a unique mild solution $u$ with explicit Wiener chaos expansion given by
 (see \cite[Theorem 3.2]{HHNT15})
 \begin{align*} 
 u_{t,x} = 1 + \sum_{n\geq 1} I_n^W(f_{t,x,n}),
 \end{align*}
 where
\begin{align}\label{form-f}
 f_{t,x,n}(\pmb{s_n}, \pmb{y_n}) = \frac{1  }{n!}  \prod_{i=0}^{n-1} G(s_{\sigma(i)}-s_{\sigma(i+1)},  y_{\sigma(i) }-y_{\sigma(i+1) }   ),
 \end{align}
 with $\sigma\in\mathfrak{S}_{n}$ being such that $t > s_{\sigma(1)} > \dots > s_{\sigma(n)} > 0$. 
 In the above expression we have used the convention $s_{\sigma(0)}=t$ and $y_{\sigma(0)} =x$.  We also refer interested readers to \cite{HHNT18,HLN} for more general noises.

  Notice that   $u_{t,x} -\E[ u_{t,x}]$ has Hermite rank $1$ and   it is known that  for any fixed $t\in\R_+$, $\{ u_{t,x} : x\in\R^d\}$ is strictly stationary meaning that  the finite-dimensional
distributions of the process $\{u_{t, x + y}, x\in\R^d\}$ do not depend on $y$.  So   the following integral 
\begin{align} \label{sa}
\int_{B_R} \big( u_{t,x} -1 \big)\, dx
\end{align}
 resembles the object in \eqref{Q0} and we are able to establish its Gaussian fluctuation under some mild assumptions. The spatial averages \eqref{sa}  have been studied in recent articles \cite{HNV18,HNVZ19,DNZ18}:
\begin{itemize}
 \item[(i)] Huang, Nualart and Viitasaari  \cite{HNV18} initiated their study by looking at the one-dimensional (nonlinear) stochastic heat equation driven by a space-time white noise.

  \item[(ii)] Huang, Nualart, Viitasaari and Zheng \cite{HNVZ19} continued to study  the $d$-dimensional stochastic heat equation driven by Gaussian noise that is white in time and colored in space, with the spatial covariance  described  by the Riesz kernel. 

 \item[(iii)] Delgado-Vences, Nualart and Zheng  \cite{DNZ18} carried out similar investigation for the one-dimensional stochastic wave equation.

\end{itemize}
  In the above references, the Gaussian noise is assumed to be white in time, which gives rise to a martingale structure. This is important for applying It\^o calculus (\emph{e.g.} Burkholder-Davis-Gundy  inequality and Clark-Ocone formula) to obtain  quantitative  central limit theorems for \eqref{sa}. 
  
   In the present paper, we consider a linear stochastic heat equation driven by space-time colored noise, so It\^o calculus can not be applied anymore; while due to the linearity, an explicit chaos expansion of the solution is available for us to apply the chaotic central limit theorem (Theorem \ref{thm00}).

  We define 
 \[
 A_t(R): = \int_{B_R} \big( u_{t,x} - 1 \big) \, dx
 \]
and let  $\Pi_pA_t(R)$  be the projection of $A_t(R)$ on the $p$th Wiener chaos, that is, 
$$\Pi_pA_t(R):=  I_p^W\left( \int_{B_R} f_{t,x,p}dx \right).$$

 Throughout this paper, we assume that $\gamma_0, \gamma_1$ are nontrivial, meaning that   
  \[
  \gamma_1(\R^d)> 0 \quad{\rm and}\quad   \int_0^t \int_0^t \gamma_0(r-v) drdv > 0
  \]
  for any $t>0$.     The following is our main result.
  
  \begin{theorem}\label{thmSHE}     Suppose $\gamma_0:\R\to\R_+\cup \{ +\infty\}$ is locally integrable,
  $\gamma_1$ satisfies Dalang's condition \eqref{Da} and  $\gamma_1(\R^d)< \infty $.   Then as $R\to+\infty$, $\{ R^{-d/2}A_t(R), \, t\ge 0\}$ converges to a    centered continuous  Gaussian process $ \{ \mathcal{G}_t, t\ge 0\}$ in finite-dimensional distributions. The covariance structure of   $\mathcal{G}$ is given by
\begin{align} \label{SIGst}
 \E [ \mathcal{G}_s \mathcal{G}_t]=:\Sigma_{s,t}   = \omega_d \int_{\R^d}\left( \E\left[ e^{\beta_{s,t}(z)}   \right] -1 \right)dz\in (0,\infty),
\end{align}
 where 
 \[
\beta_{s,t}(z):= \int_0^s \int_0^t \gamma_0(r-v)   \gamma_1(X_r^1 - X_v^2 + z)drdv
 \] 
 with $X^1, X^2$   two independent standard  Brownian motions on $\R^d$.
  
 If in addition, there exist some $t_0>0$ and some $\alpha\in(0,1/2)$ such that 
 \begin{align}\label{ADDc}
 \int_0^{t_0}\int_0^{t_0} \gamma_0(r-v) r^{-\alpha} v^{-\alpha} drdv < +\infty,
 \end{align}
 then as $R\to+\infty$, $\left\{ R^{-d/2}A_t(R), \, t\ge 0\right\}$ converges weakly to $ \{\mathcal{G}_t, t\ge 0\} $ in the space of continuous functions $C(\R_+)$.  
  \end{theorem}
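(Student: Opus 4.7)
My plan is to invoke the chaotic central limit theorem (Theorem~\ref{thm00}) applied to the Wiener chaos decomposition of $R^{-d/2}A_t(R)$, together with the Cram\'er--Wold device to upgrade per-chaos convergence to joint convergence of finitely many marginals. Since $A_t(R)=\sum_{p\ge 1}I^W_p(g_{t,p,R})$ with $g_{t,p,R}=\int_{B_R}f_{t,x,p}\,dx$, for any $0\le t_1<\cdots<t_k$ and real coefficients $(\alpha_i)$ I would apply Theorem~\ref{thm00} to $F_R=R^{-d/2}\sum_i\alpha_i A_{t_i}(R)$ with $q$-th kernel $h_{q,R}=R^{-d/2}\sum_i\alpha_i g_{t_i,q,R}$, and then verify the three hypotheses (i)--(iii).

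For the variance input (condition~(i)) and the limiting covariance I would rely on the Feynman--Kac-type second moment identity $\E[u_{s,x'}u_{t,x}]=\E[\exp(\beta_{s,t}(x-x'))]$, obtained by expanding both sides in Wiener chaos via the explicit kernels \eqref{form-f}. Consequently
\[
\Cov\bigl(A_s(R),A_t(R)\bigr)=\int_{B_R\times B_R}\bigl(\E\bigl[\exp\beta_{s,t}(x-x')\bigr]-1\bigr)\,dx\,dx',
\]
and the change of variable $z=x-x'$ together with dominated convergence yields $R^{-d}\Cov(A_s(R),A_t(R))\to\Sigma_{s,t}$, provided the map $z\mapsto\E[\exp\beta_{s,t}(z)]-1$ is integrable on $\R^d$; this integrability I would derive from $\gamma_1(\R^d)<\infty$ and Dalang's condition by a Picard-iteration-type bound on the exponential functional. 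A Taylor expansion of the exponential then identifies the per-chaos limit $\sigma_p^2$ in (i). The uniform tail condition (iii) uses the same identity: since $p!\|g_{t,p,R}\|_{\mathscr{H}^{\otimes p}}^2=\int_{B_R^2}\frac{1}{p!}\E[\beta_{t,t}(x-x')^p]\,dx\,dx'$, the normalized tail is bounded by $\omega_d\int_{\R^d}\sum_{p\ge N}\frac{1}{p!}\E[\beta_{s,t}(z)^p]\,dz$, which vanishes as $N\to\infty$ by dominated convergence.

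The technical heart is condition~(ii): $R^{-d}\|h_{q,R}\otimes_r h_{q,R}\|_{\mathscr{H}^{\otimes(2q-2r)}}\to 0$ for every $q\ge 2$ and every $1\le r\le q-1$. Expanding the squared contraction gives an integral over $B_R^4$ and four time simplices of a product of Gaussian heat kernels interlaced by two factors of $\gamma_0$ and two of $\gamma_1$, with the contraction index $r$ selecting which pairs of variables are identified. Passing to the spatial Fourier side via $\mu_1$ and using Plancherel, the key observation is that whenever $r\ge 1$ at least two of the four $B_R$-integrations become coupled through a spatial frequency constraint, so the resulting bound is of order strictly smaller than $R^{2d}$. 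This step closely mirrors the Fourier-based arguments used for the first part of the paper (Section~\ref{sec2}) but now carries an additional time integration weighted by $\gamma_0$, whose local integrability together with Dalang's condition supplies the necessary uniform bounds. I expect this to be the main obstacle of the proof.

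For tightness in $C(\R_+)$ I plan to use Kolmogorov's criterion: it suffices to prove $\E[|A_t(R)-A_s(R)|^{2k}]\le C_k R^{dk}|t-s|^{k\eta}$ for some $\eta>0$ and every sufficiently large $k$. Hypercontractivity on each Wiener chaos together with the uniform tail control from (iii) reduces this to the $k=1$ case. The second moment of the increment, expressed via Feynman--Kac and the bilinear identity above, splits the underlying Brownian paths over $[0,s]$ and $(s,t]$ and yields a boundary contribution of order $\int_s^t\int_s^t\gamma_0(r-v)\,dr\,dv$ together with a short-time contribution in which the spatial Brownian increments over $[s,t]$ produce singularities of type $r^{-\alpha}v^{-\alpha}$ near the origin; the extra assumption \eqref{ADDc} is exactly what makes this singular integral finite and controls the whole increment by $C|t-s|^{\eta}R^d$. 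Combined with the finite-dimensional convergence, Kolmogorov's criterion then produces weak convergence of $R^{-d/2}A_\cdot(R)$ to $\mathcal{G}$ in $C(\R_+)$.
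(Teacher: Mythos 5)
Your plan follows essentially the same route as the paper: a (multivariate) chaotic CLT for the finite-dimensional distributions, the Feynman--Kac identity $\E[u_{t,x}u_{s,y}]=\E[e^{\beta_{t,s}(x-y)}]$ combined with Fourier/spectral estimates under Dalang's condition for the covariance and the tail control, a Fourier--Plancherel analysis of the $r$-contractions, and Kolmogorov's criterion with hypercontractivity plus the heat-kernel increment bound producing the singular integral \eqref{ADDc} for tightness. The only caveat is that your sketch of condition~(ii) is heuristic where the paper's argument is genuinely delicate (a decomposition near and away from the frequency hyperplane $\{\tau(\pmb{\xi_p})=0\}$ and a $\delta\downarrow 0$ limit), but the strategy you identify is the correct one.
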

  
  Notice that \eqref{ADDc} is satisfied when $\gamma_0=\delta_0$. In this case $\gamma_0$ is not a function but the result can be properly formulated.

One may ask what happens if  $\gamma_1(\R^d)$ is not finite, and this includes an important example,  the Riesz kernel $\gamma_1(z) = \|z\|^{-\beta}$ with  $\beta\in(0, 2\wedge d)$.    
 
  \begin{theorem}\label{Nchaotic}      Suppose $\gamma_0:\R\to\R_+\cup \{ \infty\}$ is locally integrable  and $\gamma_1(\R^d)= + \infty$.
  
{\rm  (1)} Assume that      $\mu_1$  admits a  density $\varphi_1$ that satisfies  
  \begin{align} \label{mDc2}
  \int_{\R^d} \frac{\varphi_1(\xi) + \varphi_1(\xi)^2 }{1+ \| \xi\|^2} d\xi < +\infty.
  \end{align} 
  Then,
  $R^{-d} \Var\big( \Pi_1A_t(R) \big) $ diverges to infinity as $R\to+\infty$
  and 
  \begin{align*}
& \lim_{R\to+\infty}  R^{-d} \sum_{p\geq 2}\Var\big( \Pi_pA_t(R) \big) =  \omega_d \int_{\R^d} \E\big( e^{\beta_{t,t}(z)  }  - \beta_{t,t}(z) -1   \big)dz \in (0,\infty).
  \end{align*}
  As a consequence, we have 
  \begin{align*}
 \frac{ A_t(R)}{\sqrt{ \Var\big( A_t(R) \big)  }} \xrightarrow[R\to+\infty]{\rm law} N(0,1). 
  \end{align*}
  
  {\rm  (2) } When $\gamma_1(z) = \| z \|^{-\beta}$ for some $\beta\in(0, 2\wedge d)$, we have 
  \begin{align}
   \frac{ A_t(R)}{R^{d-\frac{\beta}{2}} } \xrightarrow[R\to+\infty]{\rm law} N(0,\kappa_\beta), \label{Nchaotic2}
  \end{align}
  with 
  \begin{align*}
  \kappa_\beta:= \left( \int_0^t\int_0^t dr dv \gamma_0(r-v) \right)\int_{B_1^2 }dx dy  \| x -y \|^{-\beta}.
  \end{align*}
\end{theorem}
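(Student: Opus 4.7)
The plan is to decompose $A_t(R)=\Pi_1 A_t(R)+\sum_{p\ge 2}\Pi_p A_t(R)$ and exploit that $\Pi_1 A_t(R)$, lying in the first Wiener chaos, is exactly centered Gaussian. Both parts rest on the identity
\begin{equation*}
\Var\bigl(\Pi_p A_t(R)\bigr) \;=\; \int_{\R^d} \bigl|B_R\cap(B_R-z)\bigr|\,\tfrac{1}{p!}\,\E\bigl[\beta_{t,t}(z)^p\bigr]\,dz ,
\end{equation*}
obtained from chaos orthogonality together with the second-moment identity $\E[u_{t,x}u_{t,x'}]=\E\bigl[\exp\beta_{t,t}(x-x')\bigr]$ by matching Taylor coefficients of the exponential. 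Once we show $\sum_{p\ge 2}\Var(\Pi_p A_t(R))$ is negligible relative to $\Var(\Pi_1 A_t(R))$ at the given normalization, the CLT is immediate by Slutsky: the normalized first chaos is exactly $N(0,1)$ (respectively $N(0,\kappa_\beta)$) and the normalized remainder tends to $0$ in $L^2$.

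For part (1), setting $p=1$ and using $R^{-d}|B_R\cap(B_R-z)|\to\omega_d$ pointwise together with Fatou's lemma forces
\begin{equation*}
R^{-d}\Var(\Pi_1 A_t(R)) \;\longrightarrow\; \omega_d\Bigl(\!\int_0^t\!\!\int_0^t\gamma_0(r-v)drdv\!\Bigr)\gamma_1(\R^d)\;=\;+\infty .
\end{equation*}
Summing over $p\ge 2$ and using $\sum_{p\ge 2}x^p/p!=e^x-x-1$, dominated convergence (justified by the dominating factor $R^{-d}|B_R\cap(B_R-z)|\le\omega_d$) yields
\begin{equation*}
R^{-d}\sum_{p\ge 2}\Var(\Pi_p A_t(R)) \;\longrightarrow\; \omega_d \int_{\R^d}\E\bigl[e^{\beta_{t,t}(z)}-\beta_{t,t}(z)-1\bigr]dz ,
\end{equation*}
and finiteness of the right-hand side is exactly where condition (\ref{mDc2}) is used: a Plancherel computation recasts $\int\E[\beta_{t,t}(z)^2]dz$ as a spectral integral controlled by $\int\varphi_1^2(\xi)/(1+\|\xi\|^2)d\xi$ (combined with local integrability of $\gamma_0$), and a hypercontractive exponential-moment bound on $\beta_{t,t}(z)$ coming from Dalang's condition and the chaos expansion of $\E[\exp\beta_{t,t}(z)]$ extends this to the full tail through the pointwise estimate $e^x-x-1\le\tfrac12 x^2e^x$. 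Combined with the first-chaos divergence, this gives $A_t(R)/\sqrt{\Var A_t(R)}\to N(0,1)$.

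For part (2), inserting $\gamma_1(z)=\|z\|^{-\beta}$ and rescaling $x=R\widetilde x$, $x'=R\widetilde x'$ in the $p=1$ formula sends $\|X_r^1-X_v^2+R(\widetilde x-\widetilde x')\|^{-\beta}$ to $R^{-\beta}\|\widetilde x-\widetilde x'\|^{-\beta}$ pointwise; a dominated-convergence argument, justified by the uniform-in-$R$ estimate $\E\bigl[\|X_r^1-X_v^2+R(\widetilde x-\widetilde x')\|^{-\beta}\bigr]\le C\|\widetilde x-\widetilde x'\|^{-\beta}$ (obtained by conditioning on the Gaussian shift and using $\beta<d$), yields $R^{-(2d-\beta)}\Var(\Pi_1 A_t(R))\to\kappa_\beta$. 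For $p\ge 2$, homogeneity of the Riesz kernel plus hypercontractive control of the $L^p$-norms of $\beta_{t,t}(z)$ produces $\E[\beta_{t,t}(z)^p]\le C^p(1+\|z\|)^{-p\beta}$, so $\Var(\Pi_p A_t(R))=O(R^{2d-p\beta})$ when $p\beta<d$ and of order at most $R^d\log R$ otherwise; since $\beta<d$, already the $p=2$ term is $O(R^{2d-2\beta})=o(R^{2d-\beta})$ and the sum over $p\ge 2$ is geometrically summable, so the remainder is $o(R^{2d-\beta})$ and Gaussianity of $\Pi_1 A_t(R)$ delivers (\ref{Nchaotic2}).

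The hard part in both cases is the uniform-in-$R$ control of the higher-chaos tail $\sum_{p\ge 2}\Var(\Pi_p A_t(R))$. In part (1) the subtlety is verifying integrability of $\E[e^{\beta_{t,t}(z)}-\beta_{t,t}(z)-1]$ in $z$: this is where the $\varphi_1^2$ half of (\ref{mDc2}) is essential, because $\E[\beta_{t,t}(z)]$ alone is not integrable and a coarse $L^\infty$ bound is too weak. In part (2) the difficulty is sharper, because (\ref{mDc2}) actually fails for the Riesz kernel; spectral integrability must be replaced by pointwise polynomial decay of $\E[\beta_{t,t}(z)^p]$ at spatial infinity coming from the homogeneity of $\|z\|^{-\beta}$, combined with a summable-in-$p$ hypercontractive bound to obtain an estimate that is simultaneously uniform in $p$ and sharp in $R$.
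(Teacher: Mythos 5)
Your decomposition and the variance identity $\Var\big(\Pi_p A_t(R)\big)=\int_{\R^d}\mathrm{vol}\big(B_R\cap B_R(-z)\big)\frac{1}{p!}\E\big[\beta_{t,t}(z)^p\big]\,dz$ are correct, and part (1) follows essentially the paper's route: Fatou for the divergence of the first chaos, dominated convergence for the rest. The one soft spot is that the real work there is the term-by-term bound $\int_{\R^d}\E[\beta_{t,t}(z)^p]\,dz\le C\,p!\,\rho^p$ with $\rho<1$, which the paper gets by rerunning the iteration of \cite[Lemma 3.3]{HHNT15} with $C_N,D_N$ replaced by their analogues built from $\varphi_1+\varphi_1^2$ --- this is exactly where \eqref{mDc2} enters. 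Your detour through $e^x-x-1\le\frac12 x^2e^x$ and a ``hypercontractive'' exponential-moment bound is not a substitute: $\beta_{t,t}(z)$ is an additive functional of the auxiliary Brownian motions, not a fixed-order chaos of $W$, so hypercontractivity is not available, and once you have the term-by-term bound the elementary inequality is superfluous anyway.

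The genuine gap is in part (2). The estimate $\E[\beta_{t,t}(z)^p]\le C^p(1+\|z\|)^{-p\beta}$ on which your tail bound rests is unproved and false: at $z=0$ it asserts $\sup_p\E[\beta_{t,t}(0)^p]^{1/p}\le C$, i.e.\ that the weighted intersection local time $\beta_{t,t}(0)$ is a.s.\ bounded, which it is not (force both paths to stay in a small ball); and for $z\ne0$ you cannot reach the exponent $p\beta$ by H\"older or Minkowski, since $\E\big[\|X_r^1-X_v^2+z\|^{-p\beta}\big]=+\infty$ as soon as $p\beta\ge d$ and $r+v>0$, while the event that the paths wander near $-z$ contributes roughly $e^{-c\|z\|^2}$ times moments comparable to those at $z=0$, which for each fixed $z$ eventually overwhelms $C^p(1+\|z\|)^{-p\beta}$ as $p$ grows. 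Hence the dichotomy ``$O(R^{2d-p\beta})$ for $p\beta<d$, $O(R^d\log R)$ otherwise'' is unsupported. The paper avoids pointwise spatial decay entirely: it works on the Fourier side, exploits the nonnegativity of $\int_{B_R^2}e^{-\i(x-y)\cdot\tau(\pmb{\xi_p})}dxdy$, changes variables $\eta_j=\xi_1+\cdots+\xi_j$ and rescales, so that the last spatial integral produces a factor $R^{d-\beta}U_R(\eta_{p-1})$ with $U_R(\eta)=c_{d,\beta}^{-1}\int_{B_1^2}e^{-\i R\eta\cdot(x-y)}\|x-y\|^{-\beta}\,dx\,dy$, uniformly bounded and tending to zero for $\eta\ne0$ by Riemann--Lebesgue; dominated convergence over the geometrically summable series (summability again via Dalang's condition) then gives $\sum_{p\ge2}\Var\big(\Pi_pA_t(R)\big)=o(R^{2d-\beta})$. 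You need an argument of this type to close part (2).
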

 
 Note that the Riesz kernel in part (2) satisfies the modified version of Dalang's condition \eqref{mDc2} if and only if  $d/2 < \beta < 2\wedge d$, which is equivalent to
 \begin{align}\label{betaequi}
  \begin{cases}
   \beta\in (1/2, 1) \quad \text{for $d=1$}\\
      \beta\in (1, 2) \qquad \text{for $d=2$}\\
   \beta\in (3/2, 2) \quad \text{for $d=3$.}
  \end{cases}
 \end{align}
 In particular, in  dimension one, $  \beta\in (1/2, 1)$ is equivalent to the fractional noise with Hurst parameter $H\in(1/2, 3/4)$.

    \begin{remark} 
 
 Unlike previous  studies, we consider a  noise that is colored in time, and our results complement,  in particular, those in \cite{HNV18,HNVZ19}. In  \cite{HNV18} where the noise is white in space and time,  the authors were able to obtain the chaotic central limit theorem for the linear equation (parabolic Anderson model), proving also a rate of convergence in the total variation distance.
  The quantitative CLT in the case  $\gamma_0 = \delta_0$ and $\gamma_1(z) =\| z\|^{-\beta} $, was  
obtained in  \cite{HNVZ19} for the nonlinear equation,  and the authors of \cite{HNVZ19} also proved that for the linear equation,  the first chaos is dominant so the central limit theorem is not chaotic.  
\end{remark}
 
 We point out that in both parts of Theorem \ref{Nchaotic} the first chaos    dominates, that is, the central limit theorem is not chaotic. Moreover, we are able to provide the following functional version of Theorem \ref{Nchaotic}.
 
 \begin{theorem}\label{NCF}    Suppose $\gamma_0:\R\to\R_+\cup \{ \infty\}$ is locally integrable  and $\gamma_1(\R^d)= + \infty$.
 
 {\rm (1)} Let the assumptions in part {\rm (1)} of Theorem \ref{Nchaotic} hold and we assume that  the condition \eqref{ADDc} is satisfied. We put
 \[
 \wh{A}_t(R) := \sum_{p\geq 2} \Pi_p\big( A_t(R) \big),
 \]
 then as $R\to\infty$, the process $\big( R^{-d/2}\wh{A}_t(R): t\in\R_+\big)$ converges in law to a centered continuous Gaussian process $\wh{\mathcal{G}}$ with covariance  given by
 \begin{align*}
 \E \big[ \wh{\mathcal{G}}_s \wh{\mathcal{G}}_t \big] := \omega_d \int_{\R^d} \E\left[ e^{\beta_{s,t}(z)} - \beta_{s,t}(z) -1 \right] dz.
\end{align*}

{\rm (2)}   If   condition \eqref{ADDc} is satisfied for some $\alpha\in(0,1/2)$ and $\gamma_1(z) = \|z\|^{-\beta}$ for some $\beta\in(0, 2\wedge d)$, then  the process $\big( R^{-d+ \frac{\beta}{2}}A_t(R): t\in\R_+\big)$ converges in law to a centered continuous Gaussian process $\wt{\mathcal{G}}$, as $R\to\infty$. Here the covariance structure of $\wt{\mathcal{G}}$ is given by 
\[
 \E \big[ \wt{\mathcal{G}}_s \wt{\mathcal{G}}_t \big] = \left( \int_0^t\int_0^s dr dv \gamma_0(r-v) \right)\int_{B_1^2 }dx dy  \| x -y \|^{-\beta}.
\]

 \end{theorem}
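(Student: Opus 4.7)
The plan is to separate both parts of Theorem \ref{NCF} into the establishment of finite-dimensional convergence and tightness in $C(\R_+)$. The single-time distributional limits from Theorem \ref{Nchaotic} supply the correct one-point marginals; what remains is to couple multiple times and upgrade to a functional convergence.

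For Part (1), I would first obtain finite-dimensional convergence of $R^{-d/2}\wh{A}_t(R)$ via the Cram\'er--Wold device combined with the chaotic central limit theorem, Theorem \ref{thm00}. For fixed times $0<t_1<\cdots<t_k$ and coefficients $\la_1,\ldots,\la_k\in\R$, the linear combination $R^{-d/2}\sum_j\la_j\wh{A}_{t_j}(R)$ expands as $\sum_{p\geq 2}I_p^W(h_{p,R})$ with $h_{p,R}=R^{-d/2}\sum_j\la_j\int_{B_R}f_{t_j,x,p}\,dx$. The chaos-wise variance limits extend the single-time computation from Theorem \ref{Nchaotic}(1) by replacing $\beta_{t,t}$ with the two-time version $\beta_{t_j,t_k}$ in the Feynman--Kac covariance formula $\E[u_{s,x}u_{t,y}]=\E[\exp(\beta_{s,t}(x-y))]$, yielding precisely $\sum_{j,k}\la_j\la_k\E[\wh{\mathcal{G}}_{t_j}\wh{\mathcal{G}}_{t_k}]$ as the total limiting variance. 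The contraction estimates (condition (ii)) carry over essentially verbatim from Theorem \ref{Nchaotic}(1) as they only require pointwise Fourier bounds on the kernels, and the tail condition (iii) follows from uniform exponential integrability of $\beta$.

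For tightness in $C(\R_+)$ I would apply Kolmogorov's continuity criterion. By hypercontractivity on each Wiener chaos combined with Minkowski's inequality, bounds on $\E\bigl[|\wh{A}_t(R)-\wh{A}_s(R)|^{2k}\bigr]$ reduce to controlling the variances of the chaotic components of the increments with summability in $p$. The condition \eqref{ADDc} supplies the crucial H\"older regularity: after decomposing $f_{t,x,p}-f_{s,x,p}$ in \eqref{form-f} according to which time variable causes the discrepancy, the resulting temporal integrals are dominated by $\int_s^t\int_s^t\gamma_0(r-v)(r-s)^{-\al}(v-s)^{-\al}\,drdv\lesssim|t-s|^{1-2\al}$, and summability in $p$ is handled through the same exponential moment bound on $\beta_{t,t}(z)$ that underlies Theorem \ref{thmSHE}.

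For Part (2), the key algebraic observation is $R^{-d+\beta/2}=R^{-d/2}\cdot R^{\beta/2-d/2}$ with $R^{\beta/2-d/2}\to 0$ since $\beta<d$, so the higher-chaos contribution $R^{-d+\beta/2}\wh{A}_t(R)$ is negligible once one shows $\Var(\wh{A}_t(R))=o(R^{2d-\beta})$. This is verified by expanding $\E[e^{\beta_{t,t}(z)}]-\beta_{t,t}(z)-1$ as a power series in $\beta_{t,t}(z)$ and integrating over $z$: the leading quadratic term behaves like $\int\|z\|^{-2\beta}$-type integrals truncated to $B_R$, which remain of lower order than $R^{2d-\beta}$. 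Thus the full functional limit is driven by the Gaussian process $R^{-d+\beta/2}\Pi_1 A_t(R)$. Its finite-dimensional convergence is automatic once covariances converge: a direct computation using \eqref{COVst}, the change of variables $z=x-y$, and scaling produces $R^{2d-\beta}\bigl(\int_0^s\int_0^t\gamma_0(r-v)\,drdv\bigr)\bigl(\int_{B_1^2}\|x-y\|^{-\beta}\,dxdy\bigr)$ as the leading order of $\Cov(\Pi_1 A_s(R),\Pi_1 A_t(R))$. Tightness of this Gaussian process reduces to a variance estimate for increments, obtained by the same application of \eqref{ADDc} as in Part (1).

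The principal technical obstacle will be establishing the uniform-in-$R$ H\"older bound on $\Var(\wh{A}_t(R)-\wh{A}_s(R))$ with explicit dependence on the chaos index $p$, enabling both the hypercontractive upgrade to high moments and the summation across the infinite chaos expansion. The product form of $f_{t,x,p}$ in \eqref{form-f} forces a delicate decomposition of the temporal increment into $p$ pieces, each of which must be dominated using \eqref{ADDc} uniformly, and the resulting bound must decay fast enough in $p$ to absorb the hypercontractive factor $(2k-1)^{p/2}$.
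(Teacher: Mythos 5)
Your Part (1) follows the paper's route exactly: the multivariate chaotic CLT (Cram\'er--Wold plus Theorem \ref{thm00}, i.e.\ Proposition \ref{tool}) for the finite-dimensional distributions, the two-time Feynman--Kac covariance for the limit, and Kolmogorov's criterion via hypercontractivity with \eqref{ADDc} controlling the temporal increments; the paper itself only notes that the arguments of Sections 3.1--3.3 go through once $C_N,D_N$ are replaced by the primed quantities coming from \eqref{mDc2}. Your Part (2) treatment of the first chaos (covariance scaling and an \eqref{ADDc}-based increment bound for the Gaussian process $R^{-d+\beta/2}\Pi_1 A_\bullet(R)$) also matches the paper.

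The genuine gap is in Part (2), in how you dispose of the higher-order chaoses. First, your justification of $\sum_{p\ge2}\Var(\Pi_pA_t(R))=o(R^{2d-\beta})$ by expanding $\E[e^{\beta_{t,t}(z)}]-\beta_{t,t}(z)-1$ in a power series and integrating over $z$ does not work as stated: for $\beta\le d/2$ one has $\int_{\R^d}\E[\beta_{t,t}(z)^2]\,dz=+\infty$ (cf.\ Remark \ref{rem1}(ii), which requires $p>d/\beta$ for $\kappa_p$ to exist), so everything hinges on the truncation to $B_R$, and while a termwise computation gives $O(R^{\max(d,\,2d-p\beta)})$ per chaos, you would still have to sum these bounds uniformly in $p$ --- precisely the point the paper handles instead by the scaling change of variables $(x,y,\eta_p)\mapsto(Rx,Ry,\eta_p/R)$ followed by Riemann--Lebesgue in the proof of Theorem \ref{Nchaotic}(2), a result you could simply have cited. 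Second, and more seriously, fixed-time negligibility of $R^{-d+\beta/2}\wh A_t(R)$ is not enough for convergence in $C(\R_+)$: you must show the remainder \emph{process} converges to zero in law in $C(\R_+)$, i.e.\ prove tightness of $\{R^{-d+\beta/2}\wh A_\bullet(R)\}$. Your only proposed tool for increments of $\wh A$ is the Part (1) machinery, but that machinery rests on the finiteness of $C_N',D_N'$ under \eqref{mDc2}, which for the Riesz kernel forces $\beta>d/2$ and is therefore unavailable for general $\beta\in(0,2\wedge d)$. The paper closes this by redoing the increment estimates at the scale $R^{2d-\beta}$, bounding $\|J_{1,p,R}\|_{L^2(\Omega)}^2$ and $\|J_{2,p,R}\|_{L^2(\Omega)}^2$ by $C(t-s)^{2\alpha}R^{2d-\beta}(4C_N\Gamma_s)^{p-1}$-type quantities using the spectral density $c_{d,\beta}\|\xi\|^{\beta-d}$, the scaling substitution, and \eqref{ecuu1}, before summing with the hypercontractive factor. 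Without an argument of this kind your Part (2) proves only finite-dimensional convergence, not the stated functional limit.
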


  We will organize the rest of our article into three sections.
  Section \ref{sec2} begins with a subsection on some preliminary knowledge, where we provide some important lemmas  for our later analysis. We devote Section \ref{22}     to the  investigation of  the central limit theorems on a fixed chaos by looking at assumptions on the covariance kernel and on the spectral measure separately. We derive the  corresponding  chaotic central limit theorems in Section \ref{23}.
  Section \ref{sec3} is devoted to the proof of Theorems \ref{thmSHE}, \ref{Nchaotic} and \ref{NCF}. For
  Theorem \ref{thmSHE}. we show the convergence of the finite-dimensional distributions and the tightness. Theorem \ref{Nchaotic} and Theorem \ref{NCF} are proved  as a by-product of the estimations in the proof of Theorem \ref{thmSHE}. 
Finally,  Section \ref{tech} provides the proofs of some technical results stated in previous sections.

 \bigskip

 \section{Infinite version of  the Breuer-Major theorem}\label{sec2}

\subsection{Preliminaries} In this section, we  introduce some notation for later reference and we provide several lemmas needed for our proofs.  

\medskip

Recall from our introduction that $\{W(h), h\in\H\}$ is an isonormal Gaussian process such that for any $\phi, \psi\in\H$,
\[
\E\big[ W(\phi) W(\psi) \big] = \langle \phi, \psi \rangle_\H = \int_{\R^{2d}} \phi(x) \psi(y) \gamma(x-y) dxdy = \int_{\R^d} \F\phi(\xi) \F\psi(-\xi) \mu(d\xi),
\]
where $\gamma$ is the covariance kernel and $\mu$ is the spectral measure whose Fourier transform is $\gamma$, understood in the generalized sense.  Let $\H_\mu$ be the   Hilbert space of functions $g:\R^d\to\C$
 such that $g(-x) = \overline{g}(x)$ for $\mu$-almost every $x\in\R^d$ and 
 \[
 \int_{\R^d}  \vert g(\xi) \vert^2\, \mu(d\xi) <+\infty \,.
 \]
Here $\overline{z}$ is the complex conjugate of $z\in\C$. It is clear that the Fourier transform stands as a linear isometry from $\H$ to $\H_\mu$.

For any integer $p\ge 2$, let $\H^{\otimes p}$ (resp. $\H^{\odot p}$) the $p$th tensor product (resp. symmetric tensor product) of $\H$. Note that 
for any integer $p\ge 2$,  the $p$th multiple stochastic integral $I_p^W$   is a linear and continuous operator from $\H^{\otimes p}$ into $L^2(\Omega)$. We can define spaces like  $\H_\mu^{\otimes p}$ and  $\H_{\mu}^{\odot p}$ in the obvious manner. 

\medskip

To simplify the display, we introduce some compact notation below. 

 \medskip
 
\noindent{\bf Notation A:} For any $R>0$, $B_R(x)$ stands for the $d$-dimensional Euclidean (closed) ball centered at $x$ with radius $R$ and we have used $B_R$ for $B_R(0)$. We write $\text{vol}(A)$ for the volume of $A\subset \R^d$ and $\omega_d=\text{vol}(B_1)$. We use $\| \cdot \|$ to denote the Euclidean norm in any dimension.

For $r\in\N$ and   $\pmb{x_r}= (x_1, \ldots, x_r)$,  we write $-\pmb{x_r}$ for $(-x_1, \ldots, -x_r)$,  $d\pmb{x_r} = dx_1 \cdots dx_r$ and  $\mu(d\pmb{x_r}) = \mu(dx_1)\cdots \mu(dx_r)$; we also write $\tau(\pmb{x_r}) = x_1 + \cdots +x_r$.  For integers $1\leq r < p $, we write $(\xi_1, \ldots, \xi_p) =\pmb{\xi_p} = ( \pmb{\xi_r} ,\pmb{\eta_{p-r}}    )$ with $ \pmb{\xi_r} = (\xi_1, \ldots, \xi_r)$ and $\pmb{\eta_{p-r}}    =(\xi_{r+1}, \ldots, \xi_p)$. With the above compact notation, we define the contraction operators $\otimes_r$ as follows.  For $f\in\H^{\otimes p}$ and $g\in\H^{\otimes q}$ ($p,q\in\N$), their $r$-contraction, with $0\leq r\leq p\wedge q$, belongs to $\H^{\otimes p+q-2r}$ and is defined by
  \begin{align*}
  (f\otimes _r g)\big( \pmb{\xi_{p-r}}, \pmb{\eta_{q-r}}   \big):= \int_{\R^{2rd}} f\big(  \pmb{\xi_{p-r}},    \pmb{a_{r}}\big) g\big(    \pmb{\eta_{q-r}},  \pmb{\wt{a}_{r}}\big)\prod_{j=1}^r \gamma(a_j-\wt{a}_j) d \pmb{a_{r}} d \pmb{\wt{a}_{r}}
  \end{align*}
 for $\pmb{\xi_{p-r}} \in\R^{pd-rd}$ and $\pmb{\eta_{q-r}} \in\R^{qd-rd}$. In particular, $f\otimes_0 g = f\otimes g$ is the usual tensor product and if $p=q$, $f\otimes_p g = \langle f, g \rangle_{\H^{\otimes p}}$; see also \cite[Appendix B]{bluebook}.    Let us introduce some useful lemmas now.

 \bigskip
 
 For $p$ positive, we denote by $J_p$ the Bessel function of first kind with order $p$: 
 \begin{align}\label{def-Bessel}
\qquad J_{p}( x ) = \frac{(x/2)^p}{\sqrt{\pi} \Gamma(p + \frac{1}{2} )} \int_0^{\pi} (\sin \theta)^{2p} \cos\big( x \cos\theta\big) \, d\theta,  \quad x\in\R\, ;
\end{align}
see   \cite[(5.10.4)]{Lebedev72}.   Let us also record here 
 \begin{align}\label{wdexp}
 \omega_d = \text{vol}(B_1) = \frac{ \pi^{d/2}}{   \Gamma\big(1 + \frac{d}{2}  \big)},
 \end{align} with $\Gamma$ the Euler's Gamma function.

 \begin{lemma}\label{Bessel}  $(1)$ Given $\xi\in\R^d$ and $R>0$, we have 
\[
\int_{B_R} e^{-\i \xi \cdot u}  \, du = (2\pi R)^{d/2} \| \xi \| ^{-d/2} J_{d/2}\big(R \| \xi \| \big) \,,
\]
where $J_{d/2}$ is the Bessel function of the first kind with order $d/2$.

\medskip

\noindent$(2)$ Given a positive real number $p$, we have 
\begin{align}
J_p(x) &\sim \sqrt{ 2/ (\pi x) } \cos\Big(  x - \frac{(2p+1)\pi}{4} \Big) \quad\text{as $x\to+\infty$,} \label{largex} \\
  J_p(x) &\sim \frac{x^p}{2^p \Gamma(p+1)} \quad\text{as $x\to0$.} \label{x=0}
\end{align}
As a consequence, we have $\sup\{ \vert J_p(x)\vert \,:\, x\in\R_+\} < +\infty$ and   $\vert J_p(x)\vert \leq  C | x |^{-1/2}$ for any $x\in\R$, here $C$ is some absolute constant.

\medskip

\noindent$(3)$ Put $\ell_R(x) = \omega_d^{-1} \| x\| ^{-d} J_{d/2}(R\| x\| )^2$, then $\{\ell_R: R > 0\}$ is an approximation of the identity. 
\end{lemma}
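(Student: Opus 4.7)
For part (1), I would exploit the rotational invariance of $B_R$ to reduce to the case $\xi=\|\xi\|e_1$ for a coordinate unit vector $e_1$, and then substitute $u=Rv$ to obtain
\[
\int_{B_R} e^{-\i\xi\cdot u}\,du = R^d\int_{B_1} e^{-\i R\|\xi\| v_1}\,dv.
\]
Writing $v=(v_1,v')$ with $v'\in\R^{d-1}$ and integrating first over $v'$ (a $(d-1)$-ball of radius $\sqrt{1-v_1^2}$, whose volume is explicit), then setting $v_1=\cos\theta$, the remaining one-dimensional integral coincides with the Bessel representation \eqref{def-Bessel} with $p=d/2$. Collecting the constants yields the claimed identity $(2\pi R)^{d/2}\|\xi\|^{-d/2}J_{d/2}(R\|\xi\|)$.

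For part (2), I would cite the classical asymptotic expansions \eqref{largex} and \eqref{x=0}, which can be found in \cite{Lebedev72}. The integral representation \eqref{def-Bessel} shows that $J_p$ is continuous on $\R$, and combined with \eqref{largex} this gives $\sup_{x\in\R_+}|J_p(x)|<\infty$. The uniform bound $|J_p(x)|\leq C|x|^{-1/2}$ then follows by splitting at $|x|=1$: on $|x|\leq 1$ one uses boundedness of $J_p$ together with $|x|^{-1/2}\geq 1$, while on $|x|\geq 1$ the bound follows from \eqref{largex} after absorbing the oscillatory factor and any error terms into a single constant.

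For part (3), the scaling identity $\ell_R(x)=R^d\ell_1(Rx)$ is immediate from the definition of $\ell_R$, and $\ell_1\geq 0$ is obvious. Hence it suffices to prove $\int_{\R^d}\ell_1(x)\,dx=1$, after which $(\ell_R)_{R>0}$ is the standard dilated family of a nonnegative $L^1$ function with unit mass, and so an approximation of the identity. I would prove the unit mass by applying Parseval's identity to $f=\mathbf{1}_{B_1}$: by part (1), $\F f(\xi)=(2\pi)^{d/2}\|\xi\|^{-d/2}J_{d/2}(\|\xi\|)$, whence
\[
\|\F f\|_{L^2(\R^d)}^2=(2\pi)^d\omega_d\int_{\R^d}\ell_1(x)\,dx
\]
by the very definition of $\ell_1$, while Parseval gives $\|\F f\|_{L^2(\R^d)}^2=(2\pi)^d\,\mathrm{vol}(B_1)=(2\pi)^d\omega_d$, and comparing these yields the result.

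The main obstacle will be justifying the finiteness of $\int_{\R^d}\ell_1$ before invoking Parseval, since the factor $\|x\|^{-d}$ is non-integrable near the origin. This is resolved precisely by \eqref{x=0} with $p=d/2$, which gives $J_{d/2}(\|x\|)^2=O(\|x\|^d)$ as $\|x\|\to 0$ and cancels the singularity exactly, while \eqref{largex} yields $\ell_1(x)=O(\|x\|^{-d-1})$ as $\|x\|\to\infty$, which is integrable outside a neighborhood of the origin. Together these local estimates make $\ell_1\in L^1(\R^d)$, and the Parseval computation then closes the proof.
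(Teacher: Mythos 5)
Your proposal is correct and follows essentially the same route as the paper: parts (1) and (2) coincide in substance with the paper's argument (reduction to the unit ball by rotational symmetry and scaling, slicing off the last coordinate, the substitution $v_1=\cos\theta$ to recognize \eqref{def-Bessel}, and then combining the small- and large-argument asymptotics with boundedness on a compact intermediate range to get $|J_p(x)|\le C|x|^{-1/2}$). For part (3) the paper proves the same Plancherel identity $\|\ell_1\|_{L^1(\R^d)}=1$ by hand, inserting a Gaussian factor $e^{-a\|x\|^2/4}$ and letting $a\downarrow 0$ to justify the interchange of integrals, whereas you invoke Parseval for $\mathbf{1}_{B_1}\in L^1\cap L^2$ directly — a slightly cleaner packaging of the same computation (and your preliminary check that $\ell_1\in L^1$ is not strictly needed, since $\F\mathbf{1}_{B_1}\in L^2$ is automatic).
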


\begin{proof} (1)  Let us suppose first that $R = 1$.  In this case, one sees that the Fourier transform of ${\bf 1}_{\{\| u \| \leq 1\}}$ is rotationally symmetric, so without losing any generality, we assume $\xi = (0, \ldots,0, \rho)$ with $\rho = \| \xi\| > 0$. Then for $d\geq 2$,
\begin{align}
&\int_{\R^d} e^{-\i \xi \cdot u} {\bf 1}_{\{ \| u \| \leq 1  \}} \, du \notag \\
 &= \int_{-1}^{1} e^{-\i \rho x_d} \int_{\R^{d-1}} {\bf 1}_{\{\|  \pmb{x_{d-1}} \|^2 \leq 1 - x_d^2\}} d\pmb{x_{d-1}} \, dx_d = \int_{-1}^{1} e^{-\i \rho x_d}  \omega_{d-1}  \big( 1 - x_d^2\big)^{\frac{d-1}{2}}  dx_d \notag \\
&= \omega_{d-1}\int_{-1}^{1}  \cos(\rho y)    \big( 1 - y^2\big)^{\frac{d-1}{2}}  dy =\omega_{d-1} \int_0^\pi  \cos(\rho \cos(\theta))  \sin(\theta)^d \, d\theta  \notag \\
&= (2\pi)^{d/2} \rho^{-d/2} J_{d/2}(\rho)   \notag,
\end{align}
 where the last equality follows from  the expressions \eqref{wdexp} and \eqref{def-Bessel}. That is, for $d\geq 2$,
\[
\int_{\R^d} e^{-\i \xi \cdot u} {\bf 1}_{\{ \| u \| \leq 1  \}} \, du =(2\pi)^{d/2} \| \xi\|^{-d/2} J_{d/2}(\| \xi\|).
\]
The above equality also holds true for $d=1$, as one can verify by a direct computation for both sides.  So   the result in part (1) is established for $R = 1$.  The general case follows from a change of variable. 

\medskip

(2)  The asymptotic behavior of Bessel functions can be found in \emph{e.g.}  page 134 of the book \cite{Lebedev72}.  The uniform boundedness of $J_p$ on $\R_+$ follows immediately from this asymptotic behavior.  By \eqref{largex}, we can find some $L > 0$ such that 
$
\vert J_p(x) \vert \leq 1/\sqrt{x}$ for any $x\geq L$, 
while  it follows from \eqref{def-Bessel} that $\vert J_p(x)\vert \leq  C_1 x^p$ for any $x\geq 0$. It suffices to pick $C = 1+ C_1 L^{p+\frac{1}{2}}$ such that $C_1 
\leq  C L^{-p-\frac 12}$  to  conclude  that $\vert J_p(x)\vert \leq  C | x |^{-1/2}$ for any $x\in\R$.

\medskip

(3) It suffices to show $1 = \| \ell_1 \| _{L^1(\R^d)}$. It follows from point (1) that  
\begin{align*}
 &   \int_{\R^d}    \| x\| ^{-d} J_{d/2}(\| x\| )^2dx \\
 &=\int_{\R^d} \left( \lim_{a\downarrow 0} \frac{1}{(2\pi)^{d/2} } \int_{\R^d} \exp\left(-\i \xi \cdot x - \frac{a}{4}\| x\|^2  \right) {\bf 1}_{\{ \| \xi \| \leq 1  \}} d\xi        \right)^2  dx \\
 &= \lim_{a\downarrow 0} \int_{\R^{2d}}   d\xi d\xi' {\bf 1}_{ \{ \xi, \xi'\in B_1 \}} \frac{1}{(2\pi)^d} \int_{\R^d}\exp\left(-\i  (\xi + \xi') \cdot x - \frac{a}{2}\| x\|^2  \right) dx \\
 &=  \lim_{a\downarrow 0} \int_{\R^{2d}}   d\xi d\xi' {\bf 1}_{ \{ \xi, \xi'\in B_1 \}}   \frac{\exp\big(-\| \xi + \xi' \|^2/(2a)   \big) }{(2\pi a)^{d/2}} \\
 &=  \lim_{a\downarrow 0} \int_{\R^d}  \text{vol}\big(B_1\cap B_1(\xi) \big)   \frac{e^{-\| \xi \|^2/(2a)} }{(2\pi a)^{d/2}}    =   \omega_d \,,
\end{align*}
where the interexchanges of integrals and limits are valid due to the dominated convergence theorem. Our proof of this lemma is finished.  \qedhere

\end{proof}

The following lemma has its discrete analogue in \cite[(7.2.7)]{bluebook} and for the sake of completeness, we provide a short proof; see also \cite[(3.3)]{NZ19}.

\begin{lemma}\label{need001}  If $\phi:\R^d\to\R$ belongs to $L^p(\R^d,dx)$ for some positive number $p$. Then for any $r\in(0, p)$, one has
\begin{align*}
\frac{1}{R^{d(1 - rp^{-1})} }\int_{B_R}       \vert \phi(x) \vert^r  dx \xrightarrow{R\to+\infty} 0 \,.
\end{align*}
\end{lemma}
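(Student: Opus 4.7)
The plan is to split the integration domain at an intermediate radius $M$ and then send $R\to+\infty$ and $M\to+\infty$ in that order. I would first note that naively applying H\"{o}lder's inequality on all of $B_R$ with conjugate exponents $p/r>1$ and $p/(p-r)>1$ only yields
\[
R^{-d(1-rp^{-1})}\int_{B_R}|\phi(x)|^r\,dx \leq \omega_d^{(p-r)/p}\left(\int_{\R^d}|\phi(x)|^p\,dx\right)^{r/p},
\]
which is merely uniform boundedness---the powers of $R$ cancel exactly. Hence a two-scale split is needed in order to actually extract smallness from the $L^p$-integrability of $\phi$.

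Concretely, for $0<M<R$, I would decompose
\[
\int_{B_R}|\phi(x)|^r\,dx \;=\; \int_{B_M}|\phi(x)|^r\,dx \;+\; \int_{B_R\setminus B_M}|\phi(x)|^r\,dx.
\]
By H\"{o}lder applied on $B_M$, the first summand is at most $\left(\int_{\R^d}|\phi|^p\,dx\right)^{r/p}(\omega_d M^d)^{(p-r)/p}$, which is a finite constant depending only on $M$, so after dividing by $R^{d(1-rp^{-1})}$ it vanishes as $R\to+\infty$. For the second summand, H\"{o}lder on the annulus combined with $\text{vol}(B_R\setminus B_M)\leq \omega_d R^d$ gives
\[
\int_{B_R\setminus B_M}|\phi(x)|^r\,dx \;\leq\; \left(\int_{B_M^c}|\phi(x)|^p\,dx\right)^{r/p}(\omega_d R^d)^{(p-r)/p},
\]
so after dividing by $R^{d(1-rp^{-1})}$ we obtain the bound $\omega_d^{(p-r)/p}\left(\int_{B_M^c}|\phi|^p\,dx\right)^{r/p}$, uniformly in $R>M$. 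Taking $\limsup_{R\to+\infty}$ first and then letting $M\to+\infty$, the tail $\int_{B_M^c}|\phi|^p\,dx$ vanishes because $\phi\in L^p(\R^d)$, which yields the desired conclusion.

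The argument presents no substantive obstacle---the only conceptual point is to introduce the cutoff $M$ rather than bounding on $B_R$ in one step, since the latter already uses up the entire normalization factor $R^{d(1-rp^{-1})}$ and leaves nothing with which to exploit tail-decay of $|\phi|^p$.
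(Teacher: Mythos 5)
Your proof is correct and follows essentially the same strategy as the paper's: a two-scale decomposition of $B_R$ followed by H\"older's inequality on each piece, exploiting that one piece is beaten by the normalization and the other by the tail decay of $\int|\phi|^p$. The only (immaterial) difference is that the paper cuts at the proportional radius $\delta R$ and sends $R\to+\infty$ then $\delta\downarrow 0$, whereas you cut at a fixed radius $M$ and send $R\to+\infty$ then $M\to+\infty$, so the roles of the two pieces are swapped.
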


\begin{proof}  Fix $\delta\in(0,1)$. We deduce from  H\"older's inequality that
 \begin{align*}
& \quad \frac{1}{R^{d(1 - rp^{-1})} }\int_{B_R}        \vert \phi(x) \vert^r  dx \\
&=\frac{1}{R^{d(1 - rp^{-1})} }\int_{B_{\delta R}}    \vert \phi(x) \vert^r  dx  + \frac{1}{R^{d(1 - rp^{-1})} } \int_{B_R\setminus B_{\delta R}}         \vert \phi(x) \vert^r  dx  \\
  &\leq C \delta^{d(1 - rp^{-1}) } \left(\int_{\R^d}    \vert \phi(x) \vert^p  dx \right)^{r/p} +C \Big( 1 - \delta^{d(1 - rp^{-1}) }  \Big)  \left(\int_{B_R\setminus B_{\delta R}}         \vert \phi(x) \vert^p  dx\right)^{r/p}.
 \end{align*}
Note that  for any fixed $\delta\in(0,1)$, the second term goes to zero, as $R\to+\infty$, while the first term can be made arbitrarily small by choosing sufficiently small $\delta$.  \end{proof}

At the end of this section, we record a consequence of Young's   inequality. 

\begin{lemma} Suppose $\varphi:\R^d\to\R$ belongs to $L^q(\R^d,dx)$ with $q=p/(p-1)$ for some integer $p\geq 2$. Then,
    \begin{align}\label{conv-bdd}
\big\| \varphi^{\ast p}  \big\| _\infty  \leq    \| \varphi \|_{L^q(\R^d)} ^p\,,
    \end{align}
    where the $p$-convolution   can be defined iteratively: $\varphi^{\ast 2} = \varphi\ast\varphi$, ..., $\varphi^{\ast p} =\varphi\ast \varphi^{\ast p-1}$.
\end{lemma}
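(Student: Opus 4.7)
The plan is to deduce this inequality by iterating the classical Young's convolution inequality: if $f\in L^{a}(\R^d)$ and $g\in L^{b}(\R^d)$ with $\frac1a+\frac1b=1+\frac1c$, then $\|f*g\|_{c}\le \|f\|_{a}\|g\|_{b}$. With $q=p/(p-1)$ fixed, I would establish, by induction on $k\in\{1,\dots,p\}$, the intermediate bound
\begin{equation*}
\bigl\|\varphi^{\ast k}\bigr\|_{L^{p/(p-k)}(\R^d)}\;\le\;\|\varphi\|_{L^{q}(\R^d)}^{k},
\end{equation*}
with the convention $p/(p-p)=\infty$. The $k=p$ case is exactly \eqref{conv-bdd}.

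First I would check the base case $k=1$, which is trivial. For the inductive step, assuming the bound at level $k$, I write $\varphi^{\ast(k+1)}=\varphi^{\ast k}\ast\varphi$ and apply Young with exponents $a=p/(p-k)$, $b=q=p/(p-1)$, $c=p/(p-k-1)$. The required relation $\frac1a+\frac1b=1+\frac1c$ reduces to the elementary identity $\frac{p-k}{p}+\frac{p-1}{p}=1+\frac{p-k-1}{p}$, which is immediate. Combining Young's bound with the induction hypothesis yields the desired estimate at level $k+1$.

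The inductive argument is essentially mechanical; the only minor care needed is to ensure that all intermediate exponents $p/(p-k)$ are well defined and lie in $[1,\infty]$ for $k=1,\ldots,p$ (which they do, since $q\ge 1$ forces $p\ge 2$ and hence the conjugate exponents are legitimate). There is no substantive obstacle, as the choice $q=p/(p-1)$ is precisely the one that makes the chain of Young's inequalities telescope so that the $L^{\infty}$ bound emerges exactly at the $p$-th step. An alternative one-line proof would be to write $\varphi^{\ast p}(x)$ as a $(p-1)d$-dimensional integral of $p$ copies of $\varphi$ evaluated at affinely related variables and apply a generalized Hölder-type inequality, but the iterative Young argument is the cleanest and avoids any combinatorial bookkeeping.
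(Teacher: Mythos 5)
Your proof is correct and is essentially the paper's own argument: the paper likewise iterates Young's inequality, peeling off one factor of $\varphi$ at a time with the same chain of exponents (the paper's $q_j=p/j$ for $\varphi^{\ast(p-j)}$ is exactly your $p/(p-k)$ for $\varphi^{\ast k}$), just written top-down rather than as a bottom-up induction. No gaps.
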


\begin{proof} Young's convolution inequality states that 
$$ \| h_1\ast h_2 \| _{L^r(\R^d)} \leq \| h_1 \| _{L^p(\R^d)} \| h_2 \| _{L^q(\R^d)}$$
 for any $h_1\in L^p(\R^d)$ and   $h_2\in L^q(\R^d)$ with $p^{-1} + q^{-1} = 1 + r^{-1}$ and $1\leq p, q,r\leq \infty$. 
 As a consequence,  we  obtain the following inequalities:
 \begin{align*}
 \begin{cases}  
 ~  \| \varphi^{\ast p}   \| _\infty =  \| \varphi \ast \varphi^{\ast p-1}   \| _\infty \leq   \| \varphi \|_{L^q(\R^d)}   \| \varphi^{\ast p-1} \|_{L^{q_1}(\R^d)}\quad\text{with $q_1=p$, } \\
   ~  \| \varphi^{\ast p-1} \|_{L^{q_1}(\R^d)} = \| \varphi \ast \varphi^{\ast p-2}   \| _{L^{q_1}(\R^d)} \leq  \| \varphi \|_{L^q(\R^d)}  \| \varphi^{\ast p-2} \|_{L^{q_2}(\R^d)}\quad\text{with $q_2=p/2$, }\\
     ~  \| \varphi^{\ast p-2} \|_{L^{q_2}(\R^d)} = \| \varphi \ast \varphi^{\ast p-3}   \| _{L^{q_2}(\R^d)} \leq  \| \varphi \|_{L^q(\R^d)}  \| \varphi^{\ast p-3} \|_{L^{q_3}(\R^d)}\quad\text{with $q_3= p/3$, } \\
     \qquad  \ldots \\
      ~     \| \varphi^{\ast 2} \|_{L^{q_{p-2}}(\R^d)} = \| \varphi \ast \varphi   \| _{L^{q_{p-2}}(\R^d)} \leq  \| \varphi \|_{L^q(\R^d)}  \| \varphi \|_{L^{q_{p-1}}(\R^d)}\quad\text{with $q_{p-1}=\frac{p}{p-1}$. }
      \end{cases}
 \end{align*}
This completes the proof of  \eqref{conv-bdd}.
\end{proof}

\bigskip

Recall from our introduction that we consider the case where $F = \sum_{k\geq m} I_k^W(f_k)$ has Hermite rank $m\geq 1$ with $f_k\in\H^{\odot k}$ for each $k\geq m$.  We write  
\[
G_R:=\int_{B_R} U_x F \, dx = \sum_{k\geq m} I^W_k(g_{k,R})=:  \sum_{k\geq m} G_{k,R}\quad\text{with} \quad g_{k,R} = \int_{B_R} f_k^x \, dx \,.
\]
In what follows, we first investigate the central limit theorem on each chaos based on two sets of assumptions.  One involves the covariance kernel $\gamma$ and the other is based on the spectral measure $\mu$. This is the content of Section \ref{22}, and in Section \ref{23}, we consider  the case where  $F$ has  a general chaos expansion.   In each situation, the random variable may depend on infinitely many coordinates, which shall be distinguished from the classical Breuer-Major theorem. 

\subsection{Central limit theorems on a fixed chaos}\label{22} Fix an integer $p\geq 2$ and note that the random field $\{I_p^W(f_p^x), x\in \R^d\}$ is  centered, strictly stationary. We put
\[
\E[I_p^W(f^x_p) I_p^W(f^y_p)]=: \Phi_p(x-y).
\]
Then, if 
\begin{equation} \label{cond1}
\int_{\R^d} | \Phi_p(x) | dx < \infty,
\end{equation}
we have, with the notation $G_{p,R} = I^W_p(g_{p,R}) $,
\begin{equation} \label{k1}
\lim _{R\rightarrow + \infty}  \frac {{\rm Var } (G_{p,R}) }{R^d} = \omega_d\int_{\R^d}  \Phi_p(x)  dx.
\end{equation}
Indeed, 
\[
 {\rm Var } (G_{p,R})= \int_{B_R^2} \Phi_p(x-y) dxdy=  \int_{B_R} \text{vol}\big( B_R \cap B_R(-z)   \big)   \Phi_p(z) dz. 
 \]
Because $\text{vol}\big( B_R \cap B_R(-z)   \big) / \text{vol}(B_R)$ is bounded by one and convergent to one, as $R\to+ \infty$,
  \eqref{k1} follows from (\ref{cond1})  and the  dominated convergence theorem. This fact leads us to stick on the situation that the normalization $\sigma(R)$ in \eqref{Q0} is of order $R^{d/2}$, as $R\to+\infty$.
  Such an order is also consistent with the Breuer-Major theorem (see Theorem \ref{thm0}).

\subsubsection{\bf CLT under assumptions on the covariance kernel}

We write 
\[
\Phi_p(x) = p! \langle f^x_p,f_p\rangle_{\H^{\otimes p}}= p!   \int_{\R^{2pd} }   f_p(\pmb{\xi_p})f_p(\pmb{\eta_p})  \prod_{i=1}^p\gamma\big(\xi_i-\eta_i+x\big) 
d\pmb{\xi_p} \, d\pmb{\eta_p}\,.
\]
Therefore, a sufficient condition for \eqref{cond1} to hold is the following hypothesis:

\noindent
\medskip
 {\bf (H1)}  \quad       $ f_p\in\H^{\odot p}$ satisfies  ${\displaystyle \int_{\R^d}   \int_{\R^{2pd} }   \vert f_p(\pmb{\xi_p})f_p(\pmb{\eta_p}) \vert \prod_{i=1}^p \vert \gamma\vert\big(\xi_i-\eta_i+x\big)
d\pmb{\eta_p} \, d\pmb{\xi_p} dx  <\infty}$.  

\medskip
Define 
\begin{equation} \label{kappa2}
\kappa_p(\pmb{\xi_p}-\pmb{\eta_p})= \int_{\R^d}  \prod_{i=1}^p \gamma (\xi_i-\eta_i+z )   dz.
\end{equation}
Then, under  {\bf (H1)},
\[
 \int_{\R^d} \Phi_p(x)dx =p!  \int_{\R^{2pd}}     f_p(\pmb{\xi_p})f_p(\pmb{\eta_p})       \kappa_p( \pmb{\xi_p} - \pmb{\eta_p})  \, d\pmb{\xi_p} \, d\pmb{\eta_p} .
 \]
 Suppose that $\gamma \in L^p(\R^d)$ and $f_p\in  L^1(\R^{pd})$. Then,  hypothesis  {\bf (H1)}  is satisfied. In fact, using H\"older's inequality, we obtain
\[
 \int_{\R^d}   \int_{\R^{2pd} }   | f_p(\pmb{\xi_p})f_p(\pmb{\eta_p}) | \prod_{i=1}^p     \vert \gamma\vert (\xi_i-\eta_i+x )  
d\pmb{\xi_p} \, d\pmb{\eta_p} dx\leq \| \gamma \| _{L^p(\R^d)}^p  \| f_p  \| _{L^1(\R^{pd})}^2  < \infty\,.
\]

\begin{remark}\label{rem1} {\rm  (i)  In  the particular case where $p=1$,  the conditions $f_1\in L^1(\R^d)\cap\H$ and $\gamma\in L^1(\R^d)$ are necessary, since  hypothesis {\bf (H1)} becomes
       \begin{align*}
\int_{\R^{2d}} \big\vert f_1(t) f_1(s) \big\vert \int_{\R^d} \vert\gamma\vert(t-s+z) dz dt ds =  \| f_1 \|_{L^1(\R^d)}^2  \| \gamma\| _{L^1(\R^d)}  < \infty \,.
\end{align*}
Under these necessary conditions, it is clear that
\[
\int_{B_R} I^{W}_1(f_1^x) \, dx
\]
is a centered Gaussian random variable  with  
\begin{center}
${\displaystyle \Var\left( \int_{B_R} I^{W}_1(f_1^x) \, dx\right) \sim \omega_d R^d  \| f_1 \|_{L^1(\R^d)}^2 \int_{\R^d}\gamma(z)dz}$, as $R\to+\infty$.
\end{center}

 (ii) Here is  an example of  non-integrable covariance kernel:  $\gamma(x) = \|x\|^{-\beta}$, with $\beta \in (0, d)$. Now let us   search for  sufficient condition for   $\kappa_p$ to be well defined. Notice that
\[
  \int_{\R^d} \prod_{i=1}^p\gamma(a_i + z)\, dz =    \int_{\R^d} \prod_{i=1}^p \| a_i + z\|^{-\beta}\, dz 
\]
and for $a_1, \ldots, a_p$ mutually distinct,  the product $\prod_{i=1}^p \| a_i + z\|^{-\beta}$ is integrable near the singularities.
Indeed, choosing  $\varepsilon =\frac 12 \min\{ \vert a_i - a_k \vert\,: 1\leq i<k\leq p\}$, we can write for each $j=1, \dots, p$,
\begin{align*}
\int_{B_\varepsilon(a_j)} \prod_{i=1}^p \| a_i + z\|^{-\beta}\, dz  & \leq C  \int_{B_\varepsilon(a_j)}   \| a_j + z\|^{-\beta}\, dz = C    \int_{B_\varepsilon}   \| z\|^{-\beta}\, dz \\
&= C \int_0^\varepsilon r^{-\beta} r^{d-1} \, dr  < \infty.
\end{align*} 
Thus,  we only need to control the integral at infinity. Notice that  for $L>0$ large (that may depend on the  $a_i$'s), there exist two constants   $C_1, C_2$ such that
\[
 C_1  \int_{\| z\| \geq L} \| z\|^{-\beta p}  \, dz      \leq  \int_{\| z\| \geq L} \prod_{i=1}^p\| a_i + z\|^{-\beta}\, dz  \leq   C_2  \int_{\| z\| \geq L} \| z\|^{-\beta p}  \, dz .
 \]
 Then   the  finiteness of the integral at infinity is   equivalent to   $p> d/\beta$. In other words, the function $\kappa_p$, given in \eqref{kappa2}, makes sense only for  $p> d/\beta$. This forces us to consider  chaoses of order at least $\lfloor d/\beta \rfloor +1 =:m_0$.  
 Now for $p\geq m_0$,   the kernel $f_p\in\mathfrak{H}^{\odot p}$   satisfies
{\bf (H1)} if
\[
\int_{\R^{2pd}} \big\vert f_p(\pmb{x_p})f_p(\pmb{y_p})\big\vert \int_{\R^d} \prod_{i=1}^p \| x_i - y_i + z \|^{-\beta}\, dz d\pmb{x_p} d\pmb{y_p} < \infty.
\]
}
\end{remark}

\bigskip

The following result is a central limit theorem under some restrictions on $\gamma$.

\begin{theorem}    \label{thm1}  Fix  an integer $p\ge 2$ and  $f_p\in \H^{\odot p}$.  Assume the hypothesis {\bf (H1)}. 
Moreover,   suppose that one of the  following two  conditions hold true:
\begin{itemize}
\item[(i)] The kernel $f_p$ has the form\footnote{If $h_1,\ldots, h_p\in\H$, we denote by $\texttt{sym}\big(h_{1} \otimes \cdots \otimes h_{p}\big)$ the symmetrization of the tensor product $h_{1} \otimes \cdots \otimes h_{p}$:
\[
\texttt{sym}\big(h_{1} \otimes \cdots \otimes h_{p}\big) :=\frac{1}{p!} \sum_{\pi\in\mathfrak{S}_p} h_{\pi(1)} \otimes \cdots \otimes h_{\pi(p)}\,,
\]
where $\mathfrak{S}_p$ is the permutation group on the first $p$ positive  integers.} $f_p= \texttt{sym}\big(h_{1} \otimes \cdots \otimes h_{p}\big)$, where the $h_j\in\H$ satisfy
 \begin{align}
\sum_{i,j=1}^p \int_{\R^d} \left | \int_{\R^{2d}} h_{i} (s) h_{j} (t) \gamma(s-t+z)  dsdt  \right|^p dz <\infty \,. \label{cond:tp}
 \end{align}

 \item[(ii)] $\gamma \in L^p(\R^d)$ and $f_p \in L^1(\R^{pd})$. $($Note that  {\rm (ii)} implies  {\bf (H1)}.$)$
 \end{itemize}
Then  
\[
\frac{G_{p,R}} {R^{d/2}}  \xrightarrow[R\to+\infty]{\rm law}  N(0,  \sigma^2_p),
\]
where 
\[
\sigma^2_p= p!   \omega_d \int_{\R^{2pd} }    f_p(\pmb{s_p})f_p(\pmb{t_p})      \kappa_p( \pmb{t_p} - \pmb{s_p})  \, d\pmb{t_p} \, d\pmb{s_p} .
\]
\end{theorem}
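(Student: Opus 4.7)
The plan is to invoke the Fourth Moment Theorem of Nualart--Peccati recalled in \eqref{contractions}, which reduces the claim to two items: (a) the variance asymptotic $R^{-d}\mathrm{Var}(G_{p,R}) \to \sigma_p^2$, which is already furnished by hypothesis {\bf (H1)} and the discussion culminating in \eqref{k1}; and (b) the contraction estimate
\[
\bigl\| g_{p,R} \otimes_r g_{p,R}\bigr\|_{\H^{\otimes(2p-2r)}} = o(R^d), \qquad r = 1, \ldots, p-1.
\]

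To establish (b), I would work on the Fourier/spectral side. Using the isometry $\phi \mapsto \F \phi$ from $\H$ to $\H_\mu$ together with the identity $\F(g_{p,R})(\pmb{\xi}) = \F f_p(\pmb{\xi})\, \F \mathbf{1}_{B_R}(\tau(\pmb{\xi}))$, the squared contraction norm $\|g_{p,R} \otimes_r g_{p,R}\|^2$ becomes a spectral integral containing four factors of $\F \mathbf{1}_{B_R}$, one per copy of $g_{p,R}$. By Lemma~\ref{Bessel}, each $|\F \mathbf{1}_{B_R}|^2$ equals $(2\pi R)^d \omega_d \ell_R$, with $\ell_R$ an approximation of the identity on $\R^d$. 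Thus, as $R \to \infty$, each such factor asymptotically imposes a Dirac-type constraint in frequency space; a careful change of variables together with this approximate-identity behavior produces a bound of the form $\|g_{p,R} \otimes_r g_{p,R}\|^2 = O(R^d) = o(R^{2d})$, provided the residual spectral integrand is suitably integrable against $\mu$.

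It is in establishing this residual integrability that hypotheses (i) and (ii) enter. In case (i), the structure $f_p = \texttt{sym}(h_1 \otimes \cdots \otimes h_p)$ makes $\F f_p$ a finite symmetric sum of products of $\F h_j$, and the hypothesis \eqref{cond:tp}, which asserts that $\phi_{ij}(y) := \int h_i(s) h_j(t) \gamma(s-t+y)\, ds\, dt$ belongs to $L^p(\R^d)$, provides exactly the integrability needed to close the estimate via H\"older's inequality with exponent $p$, since each of the three differences $y_j = x_j - x_1$ arising from translation invariance enters precisely $p$ of the $\phi$-factors in the integrand. In case (ii), where $\gamma \in L^p(\R^d)$ and $f_p \in L^1(\R^{pd})$, a direct spatial-side argument is cleaner: rewriting $\|g_{p,R} \otimes_r g_{p,R}\|^2$ as a four-fold integral over $B_R^4$, reducing by translation invariance to an integral over $\R^{3d}$, using $\|f_p\|_{L^1}$ to absorb the four copies of $f_p$, and finally applying H\"older (or the $p$-fold convolution estimate \eqref{conv-bdd}) to bound the remaining product of $\gamma$-translates by a multiple of $\|f_p\|_{L^1(\R^{pd})}^4 \|\gamma\|_{L^p(\R^d)}^{2p}$. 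The central technical obstacle in either case is combinatorial: matching the $L^p$-integrability hypothesis against the multiplicity with which each difference variable enters the integrand, and choosing the H\"older exponents so that the resulting $\R^{3d}$-integral is convergent rather than merely giving a pointwise bound.
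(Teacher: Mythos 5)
Your skeleton is the right one and largely coincides with the paper's: reduce to the Fourth Moment criterion \eqref{contractions}, get the variance asymptotics from {\bf (H1)} and \eqref{k1}, and kill the contractions by the translation-invariance change of variables on $B_R^4$ followed by H\"older with exponent $p$; your combinatorial observation that each difference $x_j-x_1$ enters exactly $p$ of the $\gamma$-factors is correct and is exactly what makes the exponents match. The Fourier detour in your middle paragraph is not needed here (it is the mechanism of Theorem~\ref{thm2}, under the spectral hypotheses), and if you pursued it literally you would hit an extra obstacle: the contraction norm produces four \emph{unsquared} factors $J_{d/2}(R\|\cdot\|)$ with four different arguments, and pairing them into the approximate identities $\ell_R$ requires repeated Cauchy--Schwarz plus a $\delta$-splitting of the frequency domain, as in the paper's Step 4 for Theorem~\ref{thm2}.

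The genuine gap is quantitative: plain H\"older does \emph{not} give $\|g_{p,R}\otimes_r g_{p,R}\|^2 = O(R^d)$, and in fact under the stated hypotheses it only gives $O(R^{2d})$, which is exactly the borderline where the Fourth Moment criterion fails. Concretely, after reducing to $R^{-d}\int_{B_{2R}^3}\phi(z_1)^r\phi(z_2)^r\phi(z_3)^{p-r}\phi(z_2-z_1-z_3)^{p-r}\,d\pmb{z_3}$ with $\phi\in L^p(\R^d)$, integrating $z_1$ by H\"older costs $\|\phi\|_{L^p}^p$, and the remaining factors satisfy only $\int_{B_{2R}}\phi^r \le C R^{d(1-r/p)}\|\phi\|_{L^p}^r$ and $\int_{B_{2R}}\phi^{p-r} \le C R^{d(1-(p-r)/p)}\|\phi\|_{L^p}^{p-r}$, whose product exactly cancels the prefactor $R^{-d}$: you land at $O(1)$ for the normalized quantity, not $o(1)$. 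The missing ingredient is the little-$o$ refinement of H\"older, Lemma~\ref{need001} (split $B_{2R}$ into $B_{\delta R}$ and its complement, using that the tail $\int_{B_R\setminus B_{\delta R}}|\phi|^p\to 0$), applied to the $z_2$- and $z_3$-integrals; this is what upgrades $O(R^{2d})$ to $o(R^{2d})$. In case (ii) the analogous point is that bounding the $\gamma$-product uniformly by $\|\gamma\|_{L^p}^{2p}$ only shows the quantity $\mathbf{L}_R$ of the paper is bounded; one still needs $\mathbf{L}_R\to 0$ pointwise (again Lemma~\ref{need001}) together with dominated convergence over the remaining $f_p$-variables. Without this step the argument proves boundedness of the normalized contractions but not their vanishing, and the CLT does not follow.
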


\begin{proof}
In view of the Fourth Moment Theorem of Nualart and Peccati \cite{FMT}, 
to prove this central convergence  it suffices to establish   
$$
\lim_{R\to+\infty}\frac{1}{R^{2d}} \big\| g_{p,R}\otimes_r g_{p,R} \big\| _{\mathfrak{H}^{\otimes  (2p-2r)  }}^2 = 0 
$$
 for $r=1, \dots, p-1$. 
By definition, we can write
\begin{align*}
 \big(g_{p,R}  \otimes_r g_{p,R}\big)( \pmb{s_{p-r}}, \pmb{t_{p-r}} )  = \int_{\R^{2rd}}  g_{p,R}(  \pmb{s_{p-r}} ,  \pmb{a_r})   g_{p,R}(\pmb{t_{p-r}},  \pmb{b_r} )   \prod_{i=1}^r  \gamma( a_i -b_i)   \, d\pmb{a_r} \, d\pmb{b_r} \,.
\end{align*}
As a consequence, 
 \begin{align}
&\quad \| g_{p,R} \otimes_r g_{p,R}\big\| ^2_{\H^{\otimes (2p-2r)} }  \notag\\
& =   \int_{\R^{4pd}}d\pmb{a_r} \, d\pmb{b_r}  d\pmb{\wt{a}_r}  \, d\pmb{\wt{b}_r}  \, d\pmb{t_{p-r}} \, d\pmb{s_{p-r}} \, d \pmb{\wt{t}_{p-r}} \, d\pmb{\wt{s}_{p-r}}   g_{p,R}( \pmb{s_{p-r}}, \pmb{a_r})   g_{p,R}(  \pmb{t_{p-r}}, \pmb{b_r} )   \notag  \\
&         \times g_{p,R}( \pmb{\wt{s}_{p-r}}, \pmb{\wt{a}_r} )  g_{p,R}( \pmb{\wt{t}_{p-r}}, \pmb{\wt{b}_r} )       \left(\prod_{i=1}^r  \gamma( a_i -b_i)      \gamma(   \wt{a}_i -\wt{b}_i ) \right) \left( \prod_{j=1}^{p-r}   \gamma(   t_j -\wt{t}_j)   \gamma(     \wt{s}_j -s_j ) \right) \,  \notag  \\ 
& =    \int_{B_R^4} d\pmb{x_4}  \int_{\R^{4dp}} d\pmb{a_r} \, d\pmb{b_r}  d\pmb{\wt{a}_r}  \, d\pmb{\wt{b}_r}  \, d\pmb{t_{p-r}} \, d\pmb{s_{p-r}} \, d \pmb{\wt{t}_{p-r}} \, d\pmb{\wt{s}_{p-r}}       f^{x_1}_p( \pmb{s_{p-r}}, \pmb{a_r})   f^{x_2}_p(  \pmb{t_{p-r}}, \pmb{b_r} )  \notag      \\
&     \times    f^{x_3}_p( \pmb{\wt{s}_{p-r}}, \pmb{\wt{a}_r} )      f^{x_4}_p( \pmb{\wt{t}_{p-r}}, \pmb{\wt{b}_r} )       \left(\prod_{i=1}^r  \gamma( a_i -b_i)      \gamma(   \wt{a}_i -\wt{b}_i ) \right)  \prod_{j=1}^{p-r}   \gamma(   t_j -\wt{t}_j )   \gamma(     \wt{s}_j -s_j )      \,.   \label{innerint}  
\end{align}
Shifting the variables from the kernels to the covariance,  we write
 \begin{align*}
&\quad  \| g_{p,R}\otimes_r g_{p,R}\big\| ^2_{\H^{\otimes (2p-2r)} }  \notag\\
 & =    \int_{B_R^4}d\pmb{x_4}  \int_{\R^{4dp}} d\pmb{a_r} \, d\pmb{b_r}  d\pmb{\wt{a}_r}  \, d\pmb{\wt{b}_r}  \, d\pmb{t_{p-r}} \, d\pmb{s_{p-r}} \, d \pmb{\wt{t}_{p-r}} \, d\pmb{\wt{s}_{p-r}}   f_p( \pmb{s_{p-r}}, \pmb{a_r})   f_p(  \pmb{t_{p-r}}, \pmb{b_r} )      \\
 &\qquad\qquad  \times f_p( \pmb{\wt{s}_{p-r}}, \pmb{\wt{a}_r} )   f_p( \pmb{\wt{t}_{p-r}}, \pmb{\wt{b}_r} )   \left(\prod_{i=1}^r  \gamma( a_i -b_i + x_1-x_2)      \gamma(   \wt{a}_i -\wt{b}_i  +x_3 -x_4) \right)        \\
&\qquad \qquad\qquad   \times  \left( \prod_{j=1}^{p-r}   \gamma(   t_j -\wt{t}_j  +x_2-x_4)   \gamma(     \wt{s}_j -s_j +x_3-x_1 ) \right)  \,  \, .    
\end{align*}
Making the change of variables $x_1-x_2=z_1$, $x_3-x_4 =z_2 $ and $x_2-x_4 =z_3$ (so $x_3-x_1=z_2-z_3-z_1$), we   obtain
\begin{align}
 &\quad  R^{-2d}  \| g_{p,R}\otimes_r g_{p,R} \| ^2_{\H^{\otimes (2p-2r)} } \notag  \\
& \leq  C  R^{-d}     \int_{B_{2R}^3} d\pmb{z_3} \Bigg|  \int_{\R^{4dp}}  d\pmb{a_r} \, d\pmb{b_r}  d\pmb{\wt{a}_r}  \, d\pmb{\wt{b}_r}  \, d\pmb{t_{p-r}} \, d\pmb{s_{p-r}} \, d \pmb{\wt{t}_{p-r}} \, d\pmb{\wt{s}_{p-r}} f_p( \pmb{s_{p-r}}, \pmb{a_r})   \notag \\
  &\qquad\quad \times    f_p(  \pmb{t_{p-r}}, \pmb{b_r} ) f_p( \pmb{\wt{s}_{p-r}}, \pmb{\wt{a}_r} )   f_p( \pmb{\wt{t}_{p-r}}, \pmb{\wt{b}_r} )       \left(\prod_{i=1}^r  \gamma( a_i -b_i + z_1)      \gamma(   \wt{a}_i -\wt{b}_i  + z_2) \right)    \notag    \\
&\qquad\qquad\qquad \qquad    \times  \left( \prod_{j=1}^{p-r}   \gamma(   t_j -\wt{t}_j  + z_3)   \gamma(     \wt{s}_j -s_j + z_2  - z_1- z_3) \right)  \,   \Bigg | . \label{RHS27}
  \end{align}
  The rest of our proof will be split into  two cases.

  \medskip
  
\noindent
{\it Proof under \rm(i)}.  
 Using the tensor-product structure of the kernels, we can further bound  \eqref{RHS27} by
  \begin{align*}
    C  R^{-d}     \int_{B_{2R}^3}    d\pmb{z_3}      \phi(z_1)^r     \phi(z_2)^r        \phi(z_3)^{p-r}    \phi(z_2-z_1 -z_3)^{p-r}   
  \,, 
  \end{align*}
   with
  \[
  \phi(z):=\sum_{i,j=1}^p \left| \int_{\R^{2d}}  h_{i}(a) h_{j} (b) 
  \gamma( a -b + z)    da db\right|\,.
  \]
In view of \eqref{cond:tp},    the function $\phi$ belong to $L^p(\R^d)$. It follows immediately from H\"older's inequality that
 \begin{align*}
& R^{-2d}  \| g_{p,R}\otimes_r g_{p,R}\big\| ^2_{\H^{\otimes (2p-2r)} } 
  \leq C  \left( \int_{\R^d}  \phi(z_1)^p   dz_1\right)  R^{-d}     \int_{B_{2R}^2}    dz_2dz_3        \phi(z_2)^r        \phi(z_3)^{p-r} \\
&\qquad\qquad =   C  \left( \int_{\R^d}  \phi(z_1)^p   dz_1\right)  R^{-d}   \left( \int_{B_{2R} }       \phi(z_2)^r  dz_2 \right)   \left( \int_{B_{2R}}   \phi(z_3)^{p-r} dz_3\right)\,.
  \end{align*}
Then, we can conclude our proof under the condition (i) by using Lemma \ref{need001}.  \hfill $\square$

\bigskip

     \noindent
{\it Proof under \rm (ii)}.    Note first that due to H\"older's inequality,
\[
\int_{B_{2R}}    \left(\prod_{i=1}^r   \vert \gamma \vert ( a_i -b_i + z_1)  \right)\left(\prod_{j=1}^{p-r} \vert  \gamma\vert (     \wt{s}_j -s_j + z_2  - z_1- z_3) \right)  dz_1 \leq \int_{\R^d}   \vert\gamma(z)\vert ^p\, dz \,,
\]
which implies that \eqref{RHS27} can be further bounded by 
\begin{align*}
&\quad C \| \gamma\|^p_{L^p(\R^d)}  \| f_p \| _{L^1(\R^{pd})}    R^{-d}     \int_{B_{2R}^2\times \R^{3dp}} dz_2dz_3   d\pmb{b_r}  d\pmb{\wt{a}_r}  \, d\pmb{\wt{b}_r}  \, d\pmb{t_{p-r}} \,  \, d \pmb{\wt{t}_{p-r}} \, d\pmb{\wt{s}_{p-r}}  \notag \\
  &\times   \big\vert  f_p(  \pmb{t_{p-r}}, \pmb{b_r} ) f_p( \pmb{\wt{s}_{p-r}}, \pmb{\wt{a}_r} )   f_p( \pmb{\wt{t}_{p-r}}, \pmb{\wt{b}_r} )  \big\vert     \left(\prod_{i=1}^r     \vert  \gamma\vert (   \wt{a}_i -\wt{b}_i  + z_2) \right)     \left( \prod_{j=1}^{p-r}   \vert \gamma \vert(   t_j -\wt{t}_j  + z_3)   \right)  \\
  &\leq  C  \int_{\R^{3dp}}   d\pmb{b_r}  d\pmb{\wt{a}_r}  \, d\pmb{\wt{b}_r}  \, d\pmb{t_{p-r}} \,  \, d \pmb{\wt{t}_{p-r}} \, d\pmb{\wt{s}_{p-r}}  \big\vert  f_p(  \pmb{t_{p-r}}, \pmb{b_r} ) f_p( \pmb{\wt{s}_{p-r}}, \pmb{\wt{a}_r} )   f_p( \pmb{\wt{t}_{p-r}}, \pmb{\wt{b}_r} )  \big\vert \times  \mathbf{L}_R  \,,
\end{align*}
where $  \mathbf{L}_R=\mathbf{L}_R\big( \pmb{\wt{a}_r}, \pmb{\wt{b}_r},  \pmb{\wt{t}_{p-r}},\pmb{t_{p-r} }\big)$ is given by
\begin{align*}
 \mathbf{L}_R= R^{-d} \left( \int_{B_{2R}}  \prod_{i=1}^r      \vert \gamma\vert (   \wt{a}_i -\wt{b}_i  + z_2) dz_2 \right)     \left(  \int_{B_{2R}} \prod_{j=1}^{p-r}    \vert\gamma\vert (   t_j -\wt{t}_j  + z_3) dz_3 \right). 
\end{align*}
Note that by H\"older's inequality and Lemma \ref{need001},
\begin{align*}
& \mathbf{L}_R  \leq \left( \prod_{i=1}^r \frac{1}{R^{d(1- rp^{-1}  )}}  \int_{B_{2R}}       \vert \gamma\vert ^r(   \wt{a}_i -\wt{b}_i  + z_2) dz_2 \right)^{1/r} \\
&\qquad\qquad\times \left( \prod_{j=1}^{p-r} \frac{1}{R^{d(1- (p-r)p^{-1}  )}}  \int_{B_{2R}}        \vert  \gamma\vert ^{p-r}(   t_j -\wt{t}_j  + z_3) dz_3    \right)^{1/(p-r)} \xrightarrow{R\to+\infty} 0\,,
\end{align*}
and that
 \[
 \mathbf{L}_R  \leq C R^{-d} \| \gamma \| _{L^p(\R^d)}^{r} R^{d\frac{p-r}{p}} \| \gamma \| _{L^p(\R^d)}^{p-r} R^{d\frac{r}{p}} = C \| \gamma \| _{L^p(\R^d)}^{p}  <+\infty\,.
 \] Thus, it follows from the dominated convergence theorem that,  as $R\to\infty$,
\[
 R^{-2d}  \| g_{p,R}\otimes_r g_{p,R} \| ^2_{\H^{\otimes (2p-2r)} }\to 0 
 \]
 for all $r\in\{1, \ldots, p-1\}$.
 This completes the proof.
     \end{proof}

 \subsubsection{\bf CLT under  assumptions on the spectral measure}

Let us first study the asymptotic variance  using the Fourier transform. Throughout this section, we are going to assume that   $\mu(d\xi) = \varphi(\xi)d\xi$, that is, the spectral measure is absolutely continuous with respect to the Lebesgue measure on $\R^d$. Note that $\varphi(\xi) = \varphi(-\xi)$. 
  
  We first write, 
\begin{align*}
\Phi_p(x-y) &= p! \langle f_x,f_y\rangle_{\H^{\otimes p}}=   p!     \int_{\R^{pd}}  (\F f^x_p)(\pmb {\xi_p}) (\F f^y_p)(- \pmb{ \xi_p}) ~\mu(d\pmb{\xi_p})  \\
    & = p!      \int_{\R^{pd}}  \exp\Big(-\i (x-y) \cdot \tau(\pmb{\xi_p}) \Big)  \vert \F f_p\vert^2   (\pmb{\xi_p})   ~\mu(d\pmb{\xi_p})\,,
     \end{align*}   
     where $\tau(\pmb{\xi_p}):= \xi_1 + \cdots +\xi_p$. As a consequence of Lemma \ref{Bessel}, we obtain
    \begin{align}
 {\rm Var} (G_{p,R})   & = p!   \int_{B^2_R}     
  \int_{\R^{pd}}  \exp\Big(-\i (x-y) \cdot \tau(\pmb{\xi_p}) \Big)      \vert \F f_p\vert^2(\pmb{\xi_p})    ~\mu(d\pmb{\xi_p})~dxdy \notag  \\
  &=p!  (2\pi R)^d      \int_{\R^{pd}}  \|   \tau(\pmb{\xi_p}) \| ^{-d} J_{d/2}\big( R\|  \tau(\pmb{\xi_p}) \| \big)^2   \vert \F f_p\vert^2 (\pmb{\xi_p})    ~\mu(d\pmb{\xi_p}) \,.  \label{need002}
    \end{align}
 Now making the change of variables 
 $\tau(\pmb{\xi_p}) = x$  yields
 \[
 {\rm Var} (G_{p,R}) R^{-d} =
  p! (2\pi )^{d} 
    \int_{\R^{d}}  \|   x \| ^{-d} J_{d/2}\big( R\|   x  \| \big)^2     \Psi_p(x) dx,
    \]
    where
    \begin{equation} \label{Psi}
    \Psi_p(x):=     \int_{\R^{pd-d}}  \vert \F f_p\vert^2\big( \pmb{\xi_{p-1}}, x -  \tau (\pmb{\xi_{p-1}})\big)  
 \varphi\big(x -  \tau (\pmb{\xi_{p-1}}) \big)     \prod_{i=1} ^{p-1} \varphi(\xi_i) 
    d\pmb{\xi_{p-1}}.
    \end{equation}
We remark that $\Psi_p$ is defined almost everywhere on $\R^d$ and recall that 
$$\big\{  \ell_R(x):= \omega_d^{-1} \|   x \| ^{-d} J_{d/2} ( R\|   x \|  )^2 \big\}_{R > 0}  $$ is an {\it approximation of the identity}.  
 Therefore, it is natural to introduce the following hypothesis:
 
 \medskip
 \noindent
 {\bf (H2)} \quad 
      $\Psi_p$, defined in  (\ref{Psi}),  is uniformly bounded on $\R^d$  and continuous at zero.

\medskip

  Under   {\bf (H2)},  we have
  \[
  \lim _{R\to +\infty}  \frac {{\rm Var } (G_{p,R}) }{R^d}= p! (2\pi)^{d} \omega_d \Psi_p(0)\,,
  \]
  where 
   \begin{align}\label{PSI0}
    \Psi_p(0)=     \int_{\R^{(p-1)d}}   \vert \F f_p\vert^2 \big(\pmb{\xi_{p-1}},  -  \tau(\pmb{\xi_{p-1}}) \big) 
 \varphi\big(  \tau (\pmb{\xi_{p-1}})   \big)   \prod_{i=1} ^{p-1} \varphi(\xi_i) d\pmb{\xi_{p-1}}.
    \end{align}
   Note that for the particular case $p=1$, $\Psi_1(x) = \vert \F f_1\vert^2(x) \varphi(x)$; if $f_1\in L^1(\R^d)$ and $\varphi$ is uniformly bounded with continuity at zero, then the function $\Psi_1$ is uniformly bounded and continuous at zero.

 \begin{remark}   (1)
 Heuristically, we can rewrite  $\Psi_p(0)$ as follows:
  \[
    \Psi_p(0)=     \int_{ \{ \tau(\pmb{\xi_p}) =0 \} } \vert \F f_p\vert^2 (\pmb{\xi_p})  
     \prod_{i=1} ^{p} \varphi(\xi_i) 
     \nu(d \pmb{\xi_p}),
    \]
    where $\nu$ is the surface measure on the hyperplane  $\{ \tau(\pmb{\xi_p}) =0 \} $. This is an informal expression, because the trace of 
$\F f_p$ on the hyperplane $\{ \tau(\pmb{\xi_p}) =0 \} $ is not properly defined for an arbitrary kernel $f_p$.

\medskip

(2)   Notice that  the quantity $ \frac {{\rm Var } (G_{p,R}) }{(2\pi R)^dp! \omega_d}$ is equal to
\[
 \int_{\R^{pd-d}}    \left(\int_{\R^d}dx \ell_R(x)   \varphi\big(x-\tau(\pmb{\xi_{p-1}}) \big)   \vert \F f_p\vert^2\big( \pmb{\xi_{p-1}},  x -\tau(\pmb{\xi_{p-1}})   \big)   \right)   \prod_{i=1} ^{p-1} \varphi(\xi_i) d\pmb{\xi_{p-1}}.
\]
It  is clear that $  \vert \F f_p\vert^2(\pmb{\xi_{p-1}},  x- \tau(\pmb{\xi_{p-1}})) $ is well-defined  almost everywhere with respect to $ \varphi\big(x-\tau(\pmb{\xi_{p-1}}) \big)dx$, and $ \varphi\big(x-\tau(\pmb{\xi_{p-1}}) \big)   \vert \F f_p\vert^2 ( \pmb{\xi_{p-1}},  x -\tau(\pmb{\xi_{p-1}})    )   $ is integrable with respect to the probability measure $\ell_R(x)dx$. We can also read from \eqref{PSI0} that the function  
    $\pmb{\xi_{p-1}}\mapsto \vert \F f_p\vert^2\big( \pmb{\xi_{p-1}},  -\tau(\pmb{\xi_{p-1}})   \big)  $   is integrable with respect to the measure     $ \varphi\big(\tau(\pmb{\xi_{p-1}}) \big)    \prod_{i=1} ^{p-1} \varphi(\xi_i) 
    d \pmb{\xi_{p-1}}$.

 \end{remark} 

\bigskip
To obtain the Gaussian fluctuation  of $G_{p,R}$, one shall first establish the order of the  variance  and then compute the contractions.  Our  hypothesis {\bf (H2)} gives the exact asymptotic behavior of ${\rm Var}(G_{p,R})$.
 In fact, it is enough to impose a weaker condition, known as the    Maruyama's condition concerning the variance; see \cite{Maruyama76}.   
 
  \begin{proposition}[Maruyama's condition]    \label{Maru}    Put 
  $$\wh{\Psi}_p(h) := \int_{\{  \| \tau(\pmb{\xi_p}  ) \| \leq h \}} \vert \F f_p\vert^2(\pmb{\xi_p}) \mu(d\pmb{\xi_p}) \,.$$  If
   \begin{align}\label{M-cond}
 0   < \liminf_{h\downarrow 0}  h^{-d} ~ \wh{\Psi}_p(h)     \leq  \limsup_{h\downarrow 0}  h^{-d} ~\wh{\Psi}_p(h) < \infty,
 \end{align}
 then we have, with $\sigma_{p,R}^2 =  { \rm Var}(G_{p,R})$ 
\[
  0 < \liminf_{R\to+\infty } \sigma_{p,R}^2 R^{-d}    \leq  \limsup_{R\to+\infty }  \sigma_{p,R}^2 R^{-d}  < \infty.
\]
  \end{proposition}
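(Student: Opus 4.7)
The plan is to start from the exact variance formula \eqref{need002}: Lemma \ref{Bessel}(1) and a radial disintegration of the measure $|\F f_p|^2 \, d\mu^{\otimes p}$ with respect to $\|\tau(\pmb{\xi_p})\|$ let us rewrite
\[
\frac{\sigma_{p,R}^2}{R^d} = p!\,(2\pi)^d \int_0^\infty r^{-d} J_{d/2}(Rr)^2 \, d\wh{\Psi}_p(r),
\]
since the kernel $\|\tau(\pmb{\xi_p})\|^{-d} J_{d/2}(R\|\tau(\pmb{\xi_p})\|)^2$ depends on $\pmb{\xi_p}$ only through $\|\tau(\pmb{\xi_p})\|$ and $\wh{\Psi}_p(r)$ is exactly the distribution function of $\|\tau(\pmb{\xi_p})\|$ under that measure. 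Once this reduction is made, proving the proposition amounts to showing that the integral on the right is pinched between two positive constants, uniformly in large $R$.

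For the lower bound, I would use the small-argument asymptotics \eqref{x=0}, which give a constant $c>0$ and $s_0>0$ with $J_{d/2}(s)^2 \ge c\, s^{d}$ for all $s\in (0,s_0]$. Therefore $r^{-d}J_{d/2}(Rr)^2 \ge c\, R^d$ whenever $Rr \le s_0$, and restricting the integral to $r\in[0,s_0/R]$ yields
\[
\int_0^\infty r^{-d} J_{d/2}(Rr)^2 \, d\wh{\Psi}_p(r) \;\ge\; c\, R^d\, \wh{\Psi}_p(s_0/R) \;\ge\; c\, c_1\, s_0^d
\]
for $R$ large, where $c_1$ is the positive constant supplied by the $\liminf$ in \eqref{M-cond}. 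This immediately produces $\liminf_R \sigma_{p,R}^2 R^{-d}>0$.

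The upper bound is the main technical step. Combining \eqref{x=0} and \eqref{largex} from Lemma \ref{Bessel}(2), there is a constant $C$ with $J_{d/2}(s)^2 \le C\,(s^d \wedge s^{-1})$ for all $s\ge 0$. I would split the integral at $r=1/R$. On $\{r\le 1/R\}$ the integrand is bounded by $CR^d$, so that piece is at most $CR^d\, \wh{\Psi}_p(1/R) \le C c_2$ by the $\limsup$ in \eqref{M-cond}. On $\{r\ge 1/R\}$ the integrand is bounded by $C\, R^{-1} r^{-d-1}$, and an integration by parts gives
\[
\int_{1/R}^\infty r^{-d-1}\, d\wh{\Psi}_p(r) \;\le\; (d+1)\int_{1/R}^\infty r^{-d-2}\, \wh{\Psi}_p(r)\, dr,
\]
the boundary term at $\infty$ vanishing because $\wh{\Psi}_p$ is bounded by $\|f_p\|^2_{\H^{\otimes p}}<\infty$ and the boundary term at $1/R$ being nonpositive. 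Splitting this last integral at some $r_0>0$ on which $\wh{\Psi}_p(r)\le c_2 r^d$ holds (furnished by the $\limsup$) controls it by $c_2 R + O(1)$, and after multiplication by $C/R$ we recover a bounded contribution.

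The one delicate point is making sure that the radial disintegration in Step 1 is legitimate (it is, because the integrand is a nonnegative Borel function of $\|\tau(\pmb{\xi_p})\|$), and that the boundedness $\wh{\Psi}_p(\infty)=\|f_p\|^2_{\H^{\otimes p}}<\infty$ is used properly to discard the boundary term in the integration by parts. Once both bounds are in place, they give the required
\[
0<\liminf_{R\to\infty}\frac{\sigma_{p,R}^2}{R^d}\le \limsup_{R\to\infty}\frac{\sigma_{p,R}^2}{R^d}<\infty,
\]
completing the proof.
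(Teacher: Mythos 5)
Your proof is correct, and it establishes the same two bounds as the paper by a mildly different route. Both arguments start from the variance identity \eqref{need002}, exploit the same dichotomy for the Bessel kernel ($J_{d/2}(s)^2\le Cs^{d}$ near zero from \eqref{x=0}, $J_{d/2}(s)^2\le Cs^{-1}$ at infinity from \eqref{largex}), and split the integral into the region where $R\|\tau(\pmb{\xi_p})\|$ is of order one --- controlled by the $\limsup$ and $\liminf$ in \eqref{M-cond} --- and a far region controlled by the total mass $\wh{\Psi}_p(\infty)=\|\F f_p\|^2_{\H_\mu^{\otimes p}}<\infty$. The differences are these. First, you push the measure $|\F f_p|^2\,\mu(d\pmb{\xi_p})$ forward onto $[0,\infty)$ through $\|\tau(\cdot)\|$ and work with the one-dimensional Stieltjes integral against $d\wh{\Psi}_p$, whereas the paper first changes variables to the function $\Psi_p$ on $\R^d$ (which presupposes a spectral density $\varphi$) and integrates against $\ell_R(x)\,dx$; your reduction is legitimate for exactly the reason you give and is marginally more general, since it needs no density. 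Second, for the far region you perform a continuous integration by parts, with the boundary term at $1/R$ equal to $-R^{d+1}\wh{\Psi}_p(1/R^{-})\le 0$ and the one at infinity vanishing by boundedness of $\wh{\Psi}_p$, followed by a split at a fixed $r_0$; the paper carries out the discrete analogue, an Abel summation over the annuli $\{n\le R\|x\|<n+1\}$ followed by a split at $n\approx R^{d/(d+1)}$. These are equivalent in substance, and yours is arguably cleaner. Third, your lower bound invokes the small-argument asymptotics of $J_{d/2}$, while the paper instead bounds $\ell_1$ below on $B_1$ via the explicit estimate $(2\pi)^d\omega_d\,\ell_1(y)=\bigl(\int_{B_1}\cos(y\cdot u)\,du\bigr)^2\ge\cos(1)^2\omega_d^2$; both yield the required $\liminf$. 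One remark: under \eqref{M-cond} the measure cannot charge the hyperplane $\{\tau(\pmb{\xi_p})=0\}$ (an atom at $r=0$ would force $h^{-d}\wh{\Psi}_p(h)\to\infty$), so the endpoint $r=0$ causes no trouble in your disintegration.
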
  
     We will provide a proof of Proposition \ref{Maru} in Section \ref{tech}, see also \cite[Corollary 2.2]{CS89}.

\medskip

  The following lemma provides  sufficient conditions for {\bf (H2)} to hold.  One of the conditions is  $\varphi \in L^q(\R^d)$, which is the   condition imposed on the spectral density in the version of the classical Breuer-Major theorem proved in \cite[Theorem 2.10]{BBL}. 
  
  \begin{lemma} \label{lem2.1} 
  Suppose that  $f_p \in L^1(\R^{pd})\cap \H^{\odot p}$ and $\varphi \in L^q(\R^d)$, with $q=p/(p-1)$.  Then $\Psi_p$ is bounded  and continuous on $\R^d$, in particular  hypothesis   {\bf (H2)} is true.  
  \end{lemma}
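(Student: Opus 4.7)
My plan is to establish boundedness and continuity of $\Psi_p$ separately, using that $f_p \in L^1(\R^{pd})$ makes $\F f_p$ bounded and (uniformly) continuous, together with the convolution bound \eqref{conv-bdd} and a density argument in $L^q(\R^d)$.

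For boundedness, I would write $M := \|\F f_p\|_\infty < \infty$ and pull $|\F f_p|^2 \leq M^2$ out of the integral \eqref{Psi}. Since $\xi_1 + \cdots + \xi_{p-1} + (x - \tau(\pmb{\xi_{p-1}})) = x$, the remaining integral is exactly the $p$-fold convolution $\varphi^{\ast p}(x)$, so \eqref{conv-bdd} yields $\|\Psi_p\|_\infty \leq M^2 \|\varphi\|_{L^q(\R^d)}^p < \infty$.

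For continuity, the obstacle is that $\varphi$ lies only in $L^q$, so $\varphi(x - \tau(\pmb{\xi_{p-1}}))$ need not depend continuously on $x$ at fixed $\pmb{\xi_{p-1}}$, ruling out a direct dominated convergence argument on \eqref{Psi}. I would instead approximate: pick $\varphi_n \in C_c(\R^d)$ with $\varphi_n \to \varphi$ in $L^q(\R^d)$, and let $\Psi_p^{(n)}$ be the analogue of \eqref{Psi} with $\varphi$ replaced by $\varphi_n$. For each $n$, the integrand in $\Psi_p^{(n)}(x)$ is continuous in $x$ (since $\F f_p$ and $\varphi_n$ are continuous) and is dominated, uniformly in $x$, by $M^2 \|\varphi_n\|_\infty \prod_{i=1}^{p-1}|\varphi_n(\xi_i)|$, which is integrable in $\pmb{\xi_{p-1}}$ because $\varphi_n$ has compact support. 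Dominated convergence then gives continuity of each $\Psi_p^{(n)}$.

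The main estimate will be showing $\Psi_p^{(n)} \to \Psi_p$ uniformly on $\R^d$. The plan is to apply the telescoping identity $\prod_{i=1}^{p}a_i - \prod_{i=1}^{p}b_i = \sum_{k=1}^{p}\bigl(\prod_{i<k}a_i\bigr)(a_k - b_k)\bigl(\prod_{i>k}b_i\bigr)$ to the product of $p$ factors of $\varphi$ (one evaluated at $x-\tau(\pmb{\xi_{p-1}})$, the others at $\xi_1,\dots,\xi_{p-1}$) versus the corresponding product of $\varphi_n$'s, then bound $|\F f_p|^2\leq M^2$. Each of the resulting $p$ summands becomes $M^2$ times a $p$-fold convolution of functions drawn from $\{\varphi,\varphi_n,|\varphi-\varphi_n|\}$ in which exactly one factor is $|\varphi-\varphi_n|$; iterated Young's inequality, exactly as in the derivation of \eqref{conv-bdd}, bounds each such convolution uniformly in $x$ by $\|\varphi-\varphi_n\|_{L^q}\bigl(\|\varphi\|_{L^q}+\sup_n\|\varphi_n\|_{L^q}\bigr)^{p-1}$. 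Summing over the $p$ telescoping terms gives $\|\Psi_p - \Psi_p^{(n)}\|_\infty \to 0$, so $\Psi_p$ is a uniform limit of continuous functions and is therefore continuous on $\R^d$; in particular hypothesis {\bf (H2)} holds. I expect no serious obstacle beyond the bookkeeping in the telescoping step, since \eqref{conv-bdd} is already set up to handle exactly these iterated convolutions.
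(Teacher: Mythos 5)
Your proof is correct, but it reaches the continuity of $\Psi_p$ by a genuinely different route than the paper. The paper argues directly: it bounds $|\Psi_p(x)-\Psi_p(z)|$ by two terms, one isolating the variation of $|\F f_p|^2$ in its last slot (controlled by the \emph{uniform} continuity of $\F f_p$ times the convolution bound $\varphi^{\ast p}(x)\le \|\varphi\|_{L^q}^p$), and one isolating the variation of $\varphi$ (rewritten as a convolution against $\varphi^{\ast (p-1)}$ and controlled by H\"older together with the $L^q$-continuity of translations, $\|\varphi(\,\cdot - x)-\varphi(\,\cdot - z)\|_{L^q}\to 0$). You instead approximate $\varphi$ by $\varphi_n\in C_c$ in $L^q$ (legitimate since $q=p/(p-1)<\infty$ for $p\ge 2$), get continuity of each $\Psi_p^{(n)}$ by dominated convergence, and prove uniform convergence $\Psi_p^{(n)}\to\Psi_p$ by telescoping plus iterated Young. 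The two arguments rest on essentially the same facts — your density step is, in effect, the standard proof of the translation-continuity lemma the paper invokes — but your version never needs the uniform continuity of $\F f_p$ (ordinary continuity and boundedness suffice) and it delivers continuity of $\Psi_p$ everywhere as a uniform limit in one stroke. The only point to make explicit is that \eqref{conv-bdd} must be used in the slightly more general form $\|g_1\ast\cdots\ast g_p\|_\infty\le\prod_j\|g_j\|_{L^q}$ for distinct $g_j\in L^q$; this follows verbatim from the same chain of Young inequalities, so your telescoping step closes. Both approaches yield boundedness identically, via $\Psi_p\le\|\F f_p\|_\infty^2\,\varphi^{\ast p}$.
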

 The proof of Lemma \ref{lem2.1} is given in Section \ref{tech}.

  \begin{remark}  It is worth comparing the sufficient conditions for the hypotheses {\bf (H1)} and {\bf (H2)} here:
  \begin{align*}
  \big\{\gamma\in L^p(\R^d)~  \text{and} ~ f_p\in L^1(\R^{pd})\big\} &\Rightarrow  {\bf (H1)}   \\
   \big\{\varphi\in L^q(\R^d) ~  \text{and} ~    f_p\in L^1(\R^{pd})\big\} &\Rightarrow  {\bf (H2)}.
  \end{align*}
This is natural in view of the Hausdorff-Young's inequality. Indeed, $q = p/(p-1)\in(1,2]$, so $\gamma=\F\varphi$ belongs to $L^p(\R^d)$, provided $\varphi\in L^q(\R^d)$. Note that both hypotheses imply that   the fluctuation of $G_{p,R}$ is of order $R^{d/2}$; moreover, as we will see shortly, both hypotheses ($\gamma\in L^p(\R^d)$ and $\varphi\in L^q(\R^d)$) imply that   the fluctuation of $G_{p,R}$ is Gaussian, as $R$ tends to infinity.

  \end{remark}

 Let us   introduce the following hypothesis, which can be seen as the contraction-analogue of {\bf (H2)}.

 \medskip
 \noindent
{\bf (H3)}  \quad   For $1\leq r\leq p-1 $ and any $\delta > 0$, 
   $\Psi_p^{(r,\delta)}$   is uniformly bounded on $\R^d$ and continuous  at zero,  
   where  
\begin{align}
&\quad \Psi_p^{(r,\delta)}(x,y) \label{k6} \\
&= \int_{\R^{2pd-2d}} d\pmb{\xi_r}  d\pmb{\eta_{p-r}} d\pmb{\wt{\xi}_{r-1}}  d\pmb{\wt{\eta}_{p-r-1}}   \vert\F f_p\vert^2\Big(\pmb{\eta_{p-r}},    \pmb{\wt{\xi}_{r-1}} , x- \tau(  \pmb{\eta_{p-r}}  ) - \tau(\pmb{\wt{\xi}_{r-1}})     \Big) \varphi(\xi_r) \notag  \\
&  \times    \vert\F f_p\vert^2 \Big(\pmb{\wt{\eta}_{p-r-1}},  y-     \tau(  \pmb{\wt{\eta}_{p-r-1}}   ) - \tau(\pmb{\xi_r}),    \pmb{\xi_r}   \Big)  \left(\prod_{i=1}^{r-1} \varphi(\xi_i)\varphi(\wt{\xi}_i)\right)    \mathbf{1}_{\{ \| \tau(\pmb{\xi_r}) + \tau(\pmb{\eta_{p-r}}) \| < \delta    \}}  \notag    \\
     &    \times \varphi(\eta_{p-r} ) \varphi\big(  \tau(  \pmb{\wt{\eta}_{p-r-1}}  ) + \tau(\pmb{\xi_r} )-y \big) \left(\prod_{j=1}^{p-r-1} \varphi(\eta_j)\varphi(\wt{\eta}_j)\right)  \varphi\big(  \tau(  \pmb{\eta_{p-r}}  ) +\tau(\pmb{\wt{\xi}_{r-1}})-x  \big).  \notag
\end{align}

  We remark that the function $\Psi_p^{(r,\delta)}$ is defined almost everywhere on $\R^{2d}$  and  with the same proof as  in Lemma \ref{lem2.1}, we can show that   $f_p \in L^1(\R^{pd})$ and $\varphi \in L^q(\R^d)$  for $q=p/(p-1)$ guarantee  {\bf (H3)}.    
 
 \medskip

  \begin{lemma} \label{lem2.3} 
  Suppose that  $f_p \in L^1(\R^{pd})\cap \H^{\odot p}$ and $\varphi \in L^q(\R^d)$, with $q=p/(p-1)$.  Then for every $r\in\{1,\ldots, p-1\}$ and $\delta>0$,  $\Psi_p^{(r,\delta)}$ is bounded continuous on $\R^{2d}$.  In particular  hypothesis   {\bf (H3)} is true.  
  \end{lemma}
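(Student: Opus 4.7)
The plan is to mirror the two-step strategy used for Lemma~\ref{lem2.1}. For \textbf{boundedness}, the key observation is that $f_p\in L^1(\R^{pd})$ gives $\|\F f_p\|_\infty\leq \|f_p\|_{L^1(\R^{pd})}$, so both $|\F f_p|^2$ factors in \eqref{k6} are bounded by $\|f_p\|_{L^1(\R^{pd})}^2$, while the indicator is at most one. After dropping both, the $2p-2$ integration variables decouple into two groups: one containing $\pmb{\xi_r}$ together with $\pmb{\wt\eta_{p-r-1}}$, and one containing $\pmb{\wt\xi_{r-1}}$ together with $\pmb{\eta_{p-r}}$; a quick bookkeeping check confirms that each group carries exactly $p$ of the $2p$ factors of $\varphi$. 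A linear change of variables together with the symmetry $\varphi(-z)=\varphi(z)$ then collapses each group integral to the $p$-fold convolution $\varphi^{*p}$ evaluated at $y$ (resp.\ $x$), and \eqref{conv-bdd} gives
\[
\bigl|\Psi_p^{(r,\delta)}(x,y)\bigr|\leq \|f_p\|_{L^1(\R^{pd})}^4\,\varphi^{*p}(x)\varphi^{*p}(y) \leq \|f_p\|_{L^1(\R^{pd})}^4\|\varphi\|_{L^q(\R^d)}^{2p},
\]
uniformly in $(x,y)\in\R^{2d}$.

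For \textbf{continuity} on $\R^{2d}$ I would use a $C_c$-approximation. Pick symmetric $\varphi_n\in C_c(\R^d)$ with $\varphi_n\to\varphi$ in $L^q(\R^d)$, and let $\Psi^{\varphi_n}$ denote the version of \eqref{k6} in which every occurrence of $\varphi$ is replaced by $\varphi_n$. For each fixed $n$, the integrand of $\Psi^{\varphi_n}$ is jointly continuous in $(x,y)$ at every fixed tuple of integration variables (the $|\F f_p|^2$ factors are continuous since $\F f_p$ is, the $\varphi_n$ factors are continuous by choice, and the indicator is independent of $(x,y)$), and the compact support of $\varphi_n$ confines the effective integration to a bounded set locally uniformly in $(x,y)$; thus $\Psi^{\varphi_n}$ is continuous on $\R^{2d}$ by dominated convergence. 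To finish, I would show the uniform convergence $\Psi^{\varphi_n}\to \Psi_p^{(r,\delta)}$ on $\R^{2d}$: the integrand is multilinear in the $2p$ copies of $\varphi$, so a standard telescoping expansion writes the difference as a sum of $2p$ terms, each of which contains exactly one factor $\varphi_n-\varphi$ and $2p-1$ factors drawn from $\{\varphi_n,\varphi\}$. Re-running the group-by-group convolution argument from Step~1 on each such term, with one copy of $\varphi_n-\varphi$ in place of one copy of $\varphi$, and applying \eqref{conv-bdd} bounds each term by
\[
C\,\|f_p\|_{L^1(\R^{pd})}^4\,\|\varphi_n-\varphi\|_{L^q(\R^d)}\bigl(\|\varphi_n\|_{L^q(\R^d)}+\|\varphi\|_{L^q(\R^d)}\bigr)^{2p-1},
\]
uniformly in $(x,y)$, which vanishes as $n\to\infty$. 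Hence $\Psi_p^{(r,\delta)}$ is continuous as a uniform limit of continuous functions.

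The main obstacle — and the only real difference from the proof of Lemma~\ref{lem2.1} — is the indicator $\mathbf{1}_{\{\|\tau(\pmb{\xi_r})+\tau(\pmb{\eta_{p-r}})\|<\delta\}}$, which couples the two variable groups and therefore prevents the clean pointwise factorization $\Psi_p^{(r,\delta)}(x,y)=\Psi_p(x)\Psi_p(y)$ that one would otherwise get. Since both the boundedness estimate and the telescoping step only need upper bounds, however, simply discarding the indicator is harmless; the whole argument thus reduces to the same Young-type convolution mechanism that drives Lemma~\ref{lem2.1}.
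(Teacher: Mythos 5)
Your proposal is correct. The boundedness half coincides with the paper's argument: bound $|\F f_p|^2$ by $\|f_p\|_{L^1(\R^{pd})}^2$, discard the indicator, split the $2p$ factors of $\varphi$ into the two groups you describe (your factor count of $p$ per group is right), and apply the iterated Young bound \eqref{conv-bdd} to get $|\Psi_p^{(r,\delta)}(x,y)|\le C\,\varphi^{\ast p}(x)\varphi^{\ast p}(y)$. For continuity the paper proceeds differently: it estimates $|\Psi_p^{(r,\delta)}(x,y)-\Psi_p^{(r,\delta)}(x_n,y_n)|$ directly, splitting it into a term $A_{1,n}$ controlled by the \emph{uniform continuity} of $\F f_p$ (the two $|\F f_p|^2$ factors, collected into a quantity $\mathbf{M}_{x,y}$) and a term $A_{2,n}$ controlled by the continuity of translation in $L^q(\R^d)$ applied to the two $(x,y)$-dependent copies of $\varphi$, with the remaining factors again absorbed by Young's inequality. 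You instead approximate $\varphi$ in $L^q$ by continuous compactly supported functions, prove continuity of each approximant by dominated convergence (legitimate, since every integration variable is the direct argument of some $\varphi_n$ factor, so the effective domain is compact and independent of $(x,y)$), and then obtain uniform convergence by telescoping over the $2p$ multilinear slots; your per-term Young bound with one slot carrying $\varphi_n-\varphi$ is valid because \eqref{conv-bdd} is really an iterated Young estimate that works for distinct factors. The two routes are morally equivalent --- the paper's translation-continuity step is itself usually proved by exactly the $C_c$-density you invoke --- but the paper's version is more direct, while yours avoids having to quote uniform continuity of $\F f_p$ (pointwise continuity plus domination suffices for the approximants). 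Either way the conclusion, and in particular hypothesis {\bf (H3)}, follows.
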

For the sake of completeness, we provide a proof in Section \ref{tech}.

\medskip

\begin{theorem}   \label{thm2}  Fix an integer $p\ge 2$ and  $f_p\in \H^{\odot p}$ satisfying hypotheses  {\bf (H2)}  and {\bf (H3)}. 
Then,
\[
\frac {G_{p,R}} { R^{d/2}}      \xrightarrow[R\to+\infty]{\rm law } N(0,\sigma_p^2) , 
\]
where $\sigma_p^2= p! (2\pi)^{d} \omega_d \Psi_p(0)$, with $\Psi_p(0)$ given by \eqref{PSI0}. 
\end{theorem}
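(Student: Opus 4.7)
The plan is to apply the Fourth Moment Theorem of Nualart-Peccati on the $p$th Wiener chaos, which reduces the problem to verifying (a) $\sigma_{p,R}^2 / R^d \to \sigma_p^2$ as $R \to \infty$, and (b) for every $r \in \{1, \ldots, p-1\}$, $R^{-2d} \|g_{p,R} \otimes_r g_{p,R}\|_{\H^{\otimes(2p-2r)}}^2 \to 0$ as $R \to \infty$.

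Part (a) is almost immediate: the identity \eqref{need002}, combined with the change of variable $\tau(\pmb{\xi_p}) = x$, yields
\[
\sigma_{p,R}^2 R^{-d} = p!\, (2\pi)^d \omega_d \int_{\R^d} \ell_R(x)\, \Psi_p(x)\, dx
\]
with $\Psi_p$ as in \eqref{Psi}. Hypothesis {\bf (H2)} asserts that $\Psi_p$ is bounded and continuous at the origin, and Lemma \ref{Bessel}(3) identifies $\{\ell_R\}$ as an approximation of identity with total mass $\omega_d$. Hence the integral converges to $\omega_d\Psi_p(0)$, giving the first claim.

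For part (b), I will pass to the spectral side. Writing $\F g_{p,R}(\pmb{\xi_p}) = \F f_p(\pmb{\xi_p})\, J_R(\tau(\pmb{\xi_p}))$ with $J_R(\eta) := \int_{B_R} e^{-\i x\cdot\eta}\, dx$, Plancherel's identity expresses $\|g_{p,R} \otimes_r g_{p,R}\|^2$ as an integral on $\R^{2pd}$ against $\mu^{\otimes 2p}$ whose integrand involves four $J_R$-factors evaluated at certain linear combinations of the spectral variables, together with products of $\F f_p$ and $\varphi$. Since $B_R$ is symmetric and so $J_R$ is real-valued, each $|J_R(\eta)|^2 = (2\pi)^d R^d \omega_d\, \ell_R(\eta)$ is, after division by $R^d$, an approximation of identity. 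I will introduce a change of variables that sets $x := \tau_{\pmb\alpha}+\tau_{\pmb\zeta}$ and $y := \tau_{\pmb\beta}-\tau_{\pmb\zeta}$ in one pair, which consumes the normalizing $R^{2d}$ and produces a prefactor $\omega_d^2\, \ell_R(x)\,\ell_R(y)$, leaving two residual $J_R$-factors involving only the difference $\tau_{\pmb\zeta'}-\tau_{\pmb\zeta}$.

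To control these residual factors, I will decompose the domain via the indicator $\mathbf{1}_{\|\tau(\pmb{\xi_r})+\tau(\pmb{\eta_{p-r}})\|<\delta} + \mathbf{1}_{\|\cdot\|\geq \delta}$. On the inner region, the integrand is bounded by exactly the kernel that builds $\Psi_p^{(r,\delta)}(x,y)$, which by {\bf (H3)} is uniformly bounded and continuous at zero; the approximation-of-identity property of $\ell_R\otimes\ell_R$ then yields a limit proportional to $\Psi_p^{(r,\delta)}(0,0)$. On the complementary region, the remaining two $J_R$-factors are bounded via the decay $|J_R(\eta)|\leq C R^{(d-1)/2}\|\eta\|^{-(d+1)/2}$ from Lemma \ref{Bessel}(2), producing an extra $o(1)$ factor after integration against $\mu$. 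Sending $\delta\downarrow 0$, the indicator in $\Psi_p^{(r,\delta)}$ collapses onto the measure-zero hyperplane $\tau(\pmb{\xi_r})+\tau(\pmb{\eta_{p-r}})=0$ (using the standing assumption that $\mu$ is absolutely continuous with density $\varphi$), and dominated convergence, based on the uniform bound in {\bf (H3)}, forces $\Psi_p^{(r,\delta)}(0,0)\to 0$.

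The main obstacle I anticipate is the careful Fourier-side bookkeeping: choosing a change of variables that genuinely isolates two $|J_R|^2$-factors as approximations of identity, and rigorously justifying that the complementary region $\{\|\cdot\|\geq \delta\}$ contributes $o(1)$ given only residual, non-squared $J_R$-decay. Once the technicalities are dispatched, the combined estimate $\limsup_{R\to\infty} R^{-2d}\|g_{p,R}\otimes_r g_{p,R}\|^2 \leq C\,\Psi_p^{(r,\delta)}(0,0)$ together with $\Psi_p^{(r,\delta)}(0,0)\to 0$ as $\delta\downarrow 0$ closes the proof.
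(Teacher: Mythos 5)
Your overall strategy coincides with the paper's: reduce to the Fourth Moment Theorem, compute the variance limit via \eqref{need002} and the approximation of the identity $\ell_R$ (your part (a) is exactly the paper's argument and is correct), then pass the contractions to the spectral side and split according to $\{\|\tau(\pmb{\xi_r})+\tau(\pmb{\eta_{p-r}})\|<\delta\}$ versus its complement, invoking {\bf (H3)} on the inner region and letting $\delta\downarrow 0$ at the end. However, there is a genuine gap in how you propose to execute part (b). After Plancherel, the integrand of $(2\pi R)^{-2d}\|g_{p,R}\otimes_r g_{p,R}\|^2_{\H^{\otimes(2p-2r)}}$ contains \emph{four} Bessel factors of the form $\|\cdot\|^{-d/2}J_{d/2}(R\|\cdot\|)$, each appearing to the \emph{first} power at four distinct arguments $a+b$, $a+\wt b$, $\wt a+b$, $\wt a+\wt b$ (with $a=\tau(\pmb{\xi_r})$, $b=\tau(\pmb{\eta_{p-r}})$, etc.). No $|J_R|^2$ is present to be ``consumed'' directly as a prefactor $\omega_d^2\,\ell_R(x)\ell_R(y)$: each factor is only $\sqrt{\omega_d\,\ell_R(\cdot)}$. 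Producing genuine $\ell_R$'s requires the iterated Cauchy--Schwarz step that the paper carries out, pairing the factors so that each appears squared and each resulting integral is controlled by $\mathbf{T}_0(R)=\Var(G_{p,R})/(p!(2\pi R)^d)$, which is uniformly bounded precisely because of {\bf (H2)}. Your sketch names this as ``bookkeeping'' but it is the substantive step, not a formality.

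The second, related problem is your treatment of the outer region $\{\|a+b\|\ge\delta\}$. Bounding one residual factor by $C R^{(d-1)/2}\|\cdot\|^{-(d+1)/2}\le C_\delta R^{-1/2}$ does give a small prefactor, but it leaves three unsquared factors $\|\cdot\|^{-d/2}|J_{d/2}(R\|\cdot\|)|$ to be integrated against $|\F f_p|^4\,d\mu^{\otimes 2p}$; these are not individually integrable (they decay only like $\|\cdot\|^{-(d+1)/2}$ at infinity and their sup norm grows like $R^{d/2}$ near the singular set), so the claimed ``extra $o(1)$ after integration against $\mu$'' does not follow without again pairing them into $\ell_R$'s via Cauchy--Schwarz. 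The paper's resolution is to bound this part by $\mathbf{T}_0(R)^{3/2}\sqrt{\mathbf{T}_\delta(R)}$ and to show $\mathbf{T}_\delta(R)\to0$ for fixed $\delta>0$ by dominated convergence (using that $J_{d/2}(R\|\tau(\pmb{\xi_p})\|)^2\to0$ pointwise and is uniformly bounded on $\{\|\tau(\pmb{\xi_p})\|\ge\delta\}$), with no rate required. Your concluding step --- $\Psi_p^{(r,\delta)}(0,0)\to0$ as $\delta\downarrow0$ by dominated convergence since $\mu$ has a density --- is fine and matches the paper, but the two issues above must be repaired before the argument closes.
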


If   {\bf (H2)}  is replaced by the Maruyama's condition \eqref{M-cond}, we have the following corollary.

\begin{corollary}\label{Cor29} 

Fix an integer $p\ge 2$ and  $f_p\in \H^{\odot p}$ satisfying hypotheses   {\bf (H3)}. Assume that Maruyama's condition \eqref{M-cond} holds true.
Then,
\[
\frac {G_{p,R}}{\sigma_{p,R}}      \xrightarrow[R\to+\infty]{\rm law } N(0,1) , 
\]
with $ \sigma_{p,R}$ being the standard deviation of $G_{p,R}$.
\end{corollary}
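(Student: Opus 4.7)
The plan is to recycle the contraction estimates that were derived in the proof of Theorem \ref{thm2} from hypothesis {\bf (H3)}, and simply replace the exact asymptotic $\sigma_{p,R}^{2}\sim p!(2\pi)^{d}\omega_{d}\Psi_{p}(0)\,R^{d}$ (which came from {\bf (H2)}) by the two-sided bound
\[
0 < c\, R^{d} \le \sigma_{p,R}^{2} \le C\, R^{d}\qquad (R\ \text{large})
\]
supplied by Proposition \ref{Maru} under Maruyama's condition \eqref{M-cond}. Indeed, hypothesis {\bf (H2)} is only used in the proof of Theorem \ref{thm2} to identify the limiting variance and, in particular, to guarantee that $\sigma_{p,R}^{2}$ does not decay faster than $R^{d}$; for the fourth-moment side of the argument, one only needs {\bf (H3)}.

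The key step is therefore to extract from the proof of Theorem \ref{thm2} the estimate
\[
\lim_{R\to+\infty}\frac{1}{R^{2d}}\,\bigl\|g_{p,R}\otimes_{r}g_{p,R}\bigr\|_{\H^{\otimes(2p-2r)}}^{2}=0
\qquad \text{for every } r\in\{1,\ldots,p-1\}.
\]
This is exactly the output of hypothesis {\bf (H3)} via the Fourier-side contraction identity and the approximation-of-identity property of $\{\ell_{R}\}_{R>0}$ from Lemma \ref{Bessel}: after writing $g_{p,R}\otimes_{r}g_{p,R}$ in the spectral domain, one recognizes integrals of the type $\int \ell_{R}(x)\ell_{R}(y)\,\Psi_{p}^{(r,\delta)}(x,y)\,dx\,dy$, which converge to $\Psi_{p}^{(r,\delta)}(0,0)$ by the continuity at zero and uniform boundedness of $\Psi_{p}^{(r,\delta)}$, and one then lets $\delta\downarrow 0$ to make the piece with $\|\tau(\pmb{\xi_{r}})+\tau(\pmb{\eta_{p-r}})\|\ge\delta$ negligible. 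Since this argument is identical to the one already carried out for Theorem \ref{thm2}, it does not have to be redone.

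Combining the two inputs, Maruyama's condition gives $\sigma_{p,R}^{2}\ge cR^{d}$ for all sufficiently large $R$, hence
\[
\frac{1}{\sigma_{p,R}^{2}}\sum_{r=1}^{p-1}\bigl\|g_{p,R}\otimes_{r}g_{p,R}\bigr\|_{\H^{\otimes(2p-2r)}}
\le \frac{1}{c\,R^{d}}\sum_{r=1}^{p-1}\bigl\|g_{p,R}\otimes_{r}g_{p,R}\bigr\|_{\H^{\otimes(2p-2r)}}
\xrightarrow[R\to+\infty]{}0,
\]
where we used $\|g_{p,R}\otimes_{r}g_{p,R}\|=o(R^{d})$ from the contraction bound above. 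Plugging this into the Nourdin--Peccati total-variation estimate \eqref{dist} yields
\[
d_{\rm TV}\!\left(\frac{G_{p,R}}{\sigma_{p,R}},\,N(0,1)\right)\xrightarrow[R\to+\infty]{}0,
\]
which is the desired conclusion. The only subtlety—and the main (minor) obstacle—is to verify that the Fourier contraction argument behind {\bf (H3)} indeed produces the $o(R^{d})$ bound on $\|g_{p,R}\otimes_{r}g_{p,R}\|$ without invoking any information on the precise size of $\sigma_{p,R}^{2}$; this is why the statement is genuinely weaker than Theorem \ref{thm2} and why only the lower bound part of Maruyama's condition \eqref{M-cond} is used, together with the automatic upper bound that follows from {\bf (H3)}.
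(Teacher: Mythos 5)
Your proposal is correct and follows exactly the route the paper intends (the proof is omitted in the paper precisely because it ``follows simply from Proposition \ref{Maru} and the proof of Theorem \ref{thm2}''): Proposition \ref{Maru} supplies $\sigma_{p,R}^{2}\asymp R^{d}$, and the Step 4 contraction analysis carried out under {\bf (H3)} gives $\|g_{p,R}\otimes_{r}g_{p,R}\|_{\H^{\otimes(2p-2r)}}=o(R^{d})$, after which \eqref{dist} concludes. One small correction to your closing remark: the contraction bounds in Step 4 (namely $\mathbf{T}_0(R)^{3/2}\sqrt{\mathbf{T}_\delta(R)}$ and $\mathbf{T}_0(R)\mathbf{K}_R$) require $\sup_{R}\mathbf{T}_0(R)<\infty$, i.e.\ the upper bound $\sigma_{p,R}^{2}=O(R^{d})$; this does not ``follow automatically from {\bf (H3)}'' as you assert, but rather from the $\limsup$ half of Maruyama's condition via Proposition \ref{Maru}, so both halves of \eqref{M-cond} are in fact used.
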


We will omit the proof of this corollary, as it follows simply  from  Proposition \ref{Maru} and the following proof of Theorem \ref{thm2}.

\begin{proof}[Proof of Theorem \ref{thm2}]  It suffices to show the contraction condition (\ref{contractions}).  
 We spilt  the proof into several steps. We will use Fourier transform to rewrite \eqref{innerint} in  Steps 1-3 and  we will carry out the asymptotic analysis in Step 4.

\medskip

  \noindent{\it Step 1:}  Plancherel's formula implies
\begin{align*}
 &\int_{\R^{2rd}}    f_p^{x_1}( \pmb{s_{p-r}}, \pmb{a_r})  f_p^{x_2}(  \pmb{t_{p-r}}, \pmb{b_r} )  \prod_{i=1}^r  \gamma( a_i -b_i)  d\pmb{a_r} \, d\pmb{b_r}  \\
 &=   \int_{\R^{rd}} (\F_r f_p^{x_1})(\pmb{s_{p-r}}, \pmb{\xi_r}) (\F_r f_p^{x_2})(\pmb{t_{p-r}}, -\pmb{\xi_r})  \, \mu(d\pmb{\xi_r} ).
\end{align*}
and 
\begin{align*}
& \int_{\R^{2rd}}   f_p^{x_3}( \pmb{\wt{s}_{p-r}},\pmb{\wt{a}_r} )   f_p^{x_4}(   \pmb{\wt{t}_{p-r}}, \pmb{\wt{b}_r} )   \prod_{i=1}^r        \gamma(   \wt{a}_i -\wt{b}_i )    d\pmb{\wt{a}_r}\, d\pmb{\wt{b}_r}  \\
&=  \int_{\R^{rd}} (\F_r f_p^{x_3})(  \pmb{\wt{s}_{p-r}},\pmb{\wt{\xi}_r}) (\F_r f_p^{x_4})(   \pmb{\wt{t}_{p-r}}, -\pmb{\wt{\xi}_r} )\, \mu(d \pmb{\wt{\xi}_r})\,,
\end{align*}
where $\F_r$ denotes the Fourier transform with respect to the right-most $r$ variables. 

\medskip

\noindent
{\it Step 2:}  Similarly, we have
\begin{align*}
&  \int_{\R^{4(p-r)d} }  (\F_r f_p^{x_1})(\pmb{s_{p-r}}, \pmb{\xi_r}) (\F_r f_p^{x_2})(\pmb{t_{p-r}}, -\pmb{\xi_r})   (\F_r f_p^{x_3})(  \pmb{\wt{s}_{p-r}},\pmb{\wt{\xi}_r}) (\F_r f_p^{x_4})(   \pmb{\wt{t}_{p-r}}, -\pmb{\wt{\xi}_r} ) \\
& \qquad\qquad\qquad\qquad \times \left( \prod_{j=1}^{p-r}   \gamma(   t_i -\wt{t}_i )   \gamma(     \wt{s}_i -s_i ) \right)  \,  d\pmb{t_{p-r}} \, d\pmb{s_{p-r}} \, d\pmb{\wt{t}_{p-r}} \, d\pmb{\wt{s}_{p-r}} \\
&= \left( \int_{\R^{2(p-r)d} } (\F_r f_p^{x_1})(\pmb{s_{p-r}}, \pmb{\xi_r})     (\F_r f_p^{x_3})(  \pmb{\wt{s}_{p-r}},\pmb{\wt{\xi}_r})   \prod_{j=1}^{p-r}     \gamma(     \wt{s}_i -s_i )     d\pmb{s_{p-r}} \, d\pmb{\wt{s}_{p-r}}\right)  \\
& \quad \times   \left(  \int_{\R^{2(p-r)d} }  (\F_r f_p^{x_2})(\pmb{t_{p-r}}, -\pmb{\xi_r})    (\F_r f_p^{x_4})(   \pmb{\wt{t}_{p-r}}, -\pmb{\wt{\xi}_r} ) \prod_{j=1}^{p-r}   \gamma(   t_i -\wt{t}_i ) d\pmb{t_{p-r}} \,  d\pmb{\wt{t}_{p-r}}  \right)\\
& =  \left( \int_{\R^{pd-rd} } (\mathcal{F}_{p-r} \F_r f_p^{x_1})(\pmb{\eta_{p-r}},\pmb{\xi_r})    (\mathcal{F}_{p-r} \F_r f_p^{x_3})( -\pmb{\eta_{p-r}},  -\pmb{\wt{\xi}_r}) \,  \mu(d\pmb{\eta_{p-r}})   \right) 
\\&\qquad\quad \times \left( \int_{\R^{pd-rd} }  (\mathcal{F}_{p-r}  \F_r f_p^{x_2})( \pmb{\wt{\eta}_{p-r}},-\pmb{\xi_r})   (  \mathcal{F}_{p-r}  \F_r f_p^{x_4})(- \pmb{\wt{\eta}_{p-r}} , \pmb{\wt{\xi}_r}) \,  \mu(d \pmb{\wt{\eta}_{p-r}})  \right),
\end{align*}
where $\mathcal{F}_{p-r}$ denotes the Fourier transform with respect to the left-most $p-r$ variables.  It is clear that the composition of $\mathcal{F}_{p-r}$ and $\F_r$ is the usual Fourier transform.  

\medskip

\noindent
{\it Step 3:} 
 Using basic properties of the Fourier transform, we have $ (\mathcal{F}_{p-r} \F_r f_p^{x})(\pmb{\xi_p}) =e^{-\i x \cdot \tau(\pmb{\xi_p})} (\F f_p)(\pmb{\xi_p}  ) $.  So combining facts from the above steps yields that the second integral in \eqref{innerint} is equal to 
\begin{align*}
&  \int_{\R^{2pd}}\mu(d\pmb{\xi_r}) \, \mu(d\pmb{\wt{\xi}_r})  \mu(d\pmb{\eta_{p-r}})  \mu(d\pmb{\wt{\eta}_{p-r}}) (\F f_p)(\pmb{\eta_{p-r}},\pmb{\xi_r})   (\F f_p)( -\pmb{\eta_{p-r}},  -\pmb{\wt{\xi}_r})   \\
& \quad    \times  (\F f_p) ( \pmb{\wt{\eta}_{p-r}},-\pmb{\xi_r})   (    \F f_p)(- \pmb{\wt{\eta}_{p-r}} , \pmb{\wt{\xi}_r})  \,      ~ e^{-\i x_1\cdot (a+b) }  e^{-\i x_2\cdot (\wt{b}-a) }  e^{-\i x_3\cdot ( -\wt{a}-b ) }  e^{-\i x_4\cdot (\wt{a}-\wt{b} ) } \,,
\end{align*}
with the  notation $a = \tau(\pmb{\xi_r}),  b=\tau(\pmb{\eta_{p-r}}), \wt{a} = \tau(\pmb{\wt{\xi}_r})$ and $ \wt{b}=\tau(\pmb{\wt{\eta}_{p-r}})$ {\it throughout this proof.}

 It follows from  Lemma \ref{Bessel} that 
\begin{align*}
&\quad   \int_{ B_R^4} e^{-\i x_1\cdot (a+b) }  e^{-\i x_2\cdot (\wt{b}-a) }  e^{-\i x_3\cdot ( -\wt{a}-b ) }  e^{-\i x_4\cdot (\wt{a}-\wt{b} ) } \, d\pmb{x_4}\\
&=  (2\pi R )^{2d} \| a + b \| ^{-d/2}  \| \wt{b} - a \| ^{-d/2}  \| \wt{a} + b   \| ^{-d/2}   \| \wt{a}-\wt{b} \| ^{-d/2} \\
& \qquad\qquad \times J_{d/2}\big( R\| a + b \| \big) J_{d/2}\big( R\|\wt{b} - a \| \big)J_{d/2}\big( R\| \wt{a} + b \| \big) J_{d/2}\big( R\|\wt{ a} - \wt{b} \| \big) .
\end{align*}
Thus, we have for $r\in\{1, \ldots, p-1\}$,
\begin{align}
&\quad \mathcal{I}_{R}: = (2\pi R)^{-2d}   \big\| g_{p,R} \otimes_r g_{p,R} \big\|^2_{\H^{\otimes (2p-2r)}}  \label{kk2}\\
&=   \int_{\R^{2pd}}\mu(d\pmb{\xi_r}) \, \mu(d\pmb{\wt{\xi}_r})  \mu(d\pmb{\eta_{p-r}})  \mu(d\pmb{\wt{\eta}_{p-r}}) (\F f_p)(\pmb{\eta_{p-r}},\pmb{\xi_r})   (\F f_p)( -\pmb{\eta_{p-r}},  -\pmb{\wt{\xi}_r})  \notag  \\
&\quad     \times (\F f_p) ( \pmb{\wt{\eta}_{p-r}},-\pmb{\xi_r})   (    \F f_p)(- \pmb{\wt{\eta}_{p-r}} , \pmb{\wt{\xi}_r})   \| a + b \| ^{-d/2}  \| \wt{b} - a \| ^{-d/2}  \| \wt{a} + b   \| ^{-d/2}   \notag \\
&\qquad  \times  \| \wt{a}-\wt{b} \| ^{-d/2}J_{d/2}\big( R\| a + b \| \big) J_{d/2}\big( R\|\wt{b} - a \| \big)J_{d/2}\big( R\| \wt{a} + b \| \big) J_{d/2}\big( R\|\wt{ a} - \wt{b} \| \big) .\notag
\end{align}

\medskip

\noindent{\it Step 4:}  In what follows, we prove   that $\lim_{R\to+ \infty} \mathcal{I}_{R} =0$.

\medskip

We decompose the above integral   into  two parts: $\mathcal{I}_{R} ={\displaystyle \int_{\R^{pd}\times \mathcal{D}_\delta} +  \int_{\R^{pd}\times \mathcal{D}^c_\delta} }$, with
\[
 \mathcal{D}_\delta = \{  (\pmb{\xi_r},  \pmb{\eta_{p-r}})\in \R^{pd}: 
\| a +b \| \geq \delta \}  .
\]
To ease the presentation, we introduce for every $\delta\in[0,\infty)$,
\[
\mathbf{T}_\delta(R) :=  \int_{\{  \| \tau(\pmb{\xi_p})  \| \geq \delta\} } \mu(d\pmb{\xi_p})   \vert\F f_p \vert^2 (\pmb{\xi_p})       \| \tau(\pmb{\xi_p}) \| ^{-d}     J_{d/2}\big( R\| \tau(\pmb{\xi_p})  \| \big)^2 .
\]
Note that, by \eqref{need002}  and the symmetry of $\mu$, we have
\[
\mathbf{T}_0(R)=\frac{\Var(G_{p,R})}{p! (2\pi R)^d }, 
\]
which, under the hypothesis {\bf (H2)}, converges to $ \omega_d \Psi_p(0)$, as $R\to+\infty$. 
 
\medskip

 Now on $\R^{pd}\times\mathcal{D}_\delta$, we can write, using Cauchy-Schwarz inequality, 
\begin{align*}
&\left\vert \int_{\R^{pd}\times \mathcal{D}_\delta}  \right\vert \leq   \int_{\R^{pd}} \, \mu(d\pmb{\wt{\xi}_r})   \mu(d\pmb{\wt{\eta}_{p-r}})   \vert\F f_p\vert (-\pmb{\wt{\eta}_{p-r}}, \pmb{\wt{\xi}_r})     \| \wt{a}-\wt{b} \| ^{-d/2} \big\vert J_{d/2}\big( R\|\wt{ a} - \wt{b} \| \big) \big\vert   \\
&         \times  \int_{\mathcal{D}_\delta} \mu(d\pmb{\xi_r})\mu(d\pmb{\eta_{p-r}})   \vert\F f_p \vert( \pmb{\eta_{p-r}}, \pmb{\xi_r})     \| a + b \| ^{-d/2}  \big\vert   J_{d/2}\big( R\| a + b \| \big)\big\vert   \vert\F f_p \vert( -\pmb{\eta_{p-r}}, -\pmb{\wt{\xi}_r})           \\
&      \times     \vert\F f_p \vert(\pmb{\wt{\eta}_{p-r}}, -\pmb{\xi_r})    \| \wt{b} - a \| ^{-d/2}  \| \wt{a} + b   \| ^{-d/2}\big\vert J_{d/2}\big( R\|\wt{b} - a \| \big)J_{d/2}\big( R\| \wt{a} + b \| \big) \big\vert \\
 &\leq  \sqrt{\mathbf{T}_\delta(R)} \int_{\R^{pd}} \, \mu(d\pmb{\wt{\xi}_r})   \mu(d\pmb{\wt{\eta}_{p-r}})   \vert\F f_p\vert (-\pmb{\wt{\eta}_{p-r}}, \pmb{\wt{\xi}_r})     \| \wt{a}-\wt{b} \| ^{-d/2} \big\vert  J_{d/2}\big( R\|\wt{ a} - \wt{b} \| \big) \big\vert    \\
 &\qquad \times    \Bigg(   \int_{\mathcal{D}_\delta}  \mu(d\pmb{\xi_r})\mu(d\pmb{\eta_{p-r}})    \vert\F f_p \vert^2( -\pmb{\eta_{p-r}}, -\pmb{\wt{\xi}_r})         \vert\F f_p \vert^2(\pmb{\wt{\eta}_{p-r}}, -\pmb{\xi_r})    \\
 &\qquad\qquad\qquad \times   \| \wt{b} - a \| ^{-d}  \| \wt{a} + b   \| ^{-d}J_{d/2}\big( R\|\wt{b} - a \| \big)^2  J_{d/2}\big( R\| \wt{a} + b \| \big)^2 \Bigg)^{1/2}\\
 &\leq \sqrt{ \mathbf{T}_\delta(R) \mathbf{T}_0(R)} \Bigg(        \int_{\R^{2pd}} \mu(d\pmb{\wt{\xi}_r})   \mu(d\pmb{\wt{\eta}_{p-r}}) \mu(d\pmb{\xi_r})\mu(d\pmb{\eta_{p-r}})  \vert\F f_p \vert^2( -\pmb{\eta_{p-r}}, -\pmb{\wt{\xi}_r})     \\
 &\qquad \times    \vert\F f_p \vert^2(\pmb{\wt{\eta}_{p-r}}, -\pmb{\xi_r})      \| \wt{b} - a \| ^{-d}  \| \wt{a} + b   \| ^{-d}J_{d/2}\big( R\|\wt{b} - a \| \big)^2  J_{d/2}\big( R\| \wt{a} + b \| \big)^2 \Bigg)^{1/2}  \\
&= \mathbf{T}_0(R)^{3/2} \sqrt{\mathbf{T}_\delta(R)}\,.
\end{align*}
We claim that  
\begin{align}\label{claim1}
\text{for any fixed $\delta > 0$, $\mathbf{T}_\delta(R)\to 0$, as $R\to+\infty$.}
\end{align} Indeed, on $\{ \| \tau(\pmb{\xi_p})  \| \geq \delta>0 \}$, $   J_{d/2}\big( R\| \tau(\pmb{\xi_p})  \| \big)^2$  converges to zero, as $R\to+\infty$; and clearly,
\[
\mathbf{T}_\delta(R) \leq   \delta^{-d} \left( \sup_{t\in\R_+} J_{d/2}(t)^2 \right) \int_{\| \tau(\pmb{\xi_p})  \| \geq \delta} \mu(d\pmb{\xi_p})   \vert\F f_p\vert^2 (\pmb{\xi_p})   < \infty\,,    
\]
so     claim \eqref{claim1} follows from  the dominated convergence theorem. Therefore,  the first part $\int_{\R^{pd}\times \mathcal{D}_\delta}  $ goes to zero, as $R$ tends to infinity. \\

Then, it remains to estimate the integral over $\R^{pd}\times\mathcal{D}_\delta^c$. Similarly,   we  obtain, by applying  Cauchy-Schwarz inequality,    
\begin{align*}
  \left\vert \int_{\R^{pd}\times \mathcal{D}_\delta^c}\right\vert & \leq  \int_{ \mathcal{D}_\delta^c}   \mu(d\pmb{\xi_r}) \mu( d \pmb{\eta_{p-r}} )   \| a + b \| ^{-d/2} \big\vert J_{d/2}\big( R\| a + b \| \big) \big\vert \vert \F f_p\vert ( \pmb{\eta_{p-r}}, \pmb{\xi_r})    \\
  &   \times \sqrt{\mathbf{T}_0(R)} \Bigg( \int_{\R^{pd}}  \| \wt{a}+b \| ^{-d} \| \wt{b}-a \| ^{-d}  J_{d/2}\big( R\|\wt{ a} +b\| \big)^2 J_{d/2}\big( R\|\wt{ b} -a \|\big)^2\\
  &    \qquad\times 
     \vert\F f_p\vert^2  (-\pmb{\eta_{p-r}}, -\pmb{\wt{\xi}_r})       \vert \F f_p\vert^2(\pmb{\wt{\eta}_{p-r}}, -\pmb{\xi_r})   
 \mu\big( d\pmb{\wt{\xi}_r} \big)  \mu\big(  d \pmb{\wt{\eta}_{p-r }} \big)\Bigg)^{  1/2} .
 \end{align*}
Recall that $\mu$ is symmetric. We can write,  after the change of variable ($\pmb{\wt{\eta}_{p-r}}\to -\pmb{\wt{\eta}_{p-r}}$) and then applying  Cauchy-Schwarz inequality,  
\begin{align*}
  \left\vert \int_{\R^{pd}\times \mathcal{D}_\delta^c}\right\vert & \leq \mathbf{T}_0(R)  \mathbf{K}_R,
 \end{align*}
where  
\begin{align*}
\mathbf{K}_R:&=\int_{\R^{pd}\times \{ \| a+b\| < \delta\} }   \mu\big(d\pmb{\xi_r}\big) \mu\big( d \pmb{\eta_{p-r}} \big)  \mu\big( d\pmb{\wt{\xi}_r} \big)  \mu\big(  d \pmb{\wt{\eta}_{p-r }} \big)   \| \wt{a}+b \| ^{-d} \| a+\wt{b} \| ^{-d}  \\
&\qquad\qquad  \times J_{d/2}\big( R\|\wt{ a} +b\| \big)^2 J_{d/2}\big( R\| a+\wt{ b}  \|\big)^2  \vert\F f_p\vert^2(\pmb{\eta_{p-r}}, \pmb{\wt{\xi}_r})      \vert\F f_p\vert^2(\pmb{\wt{\eta}_{p-r}}, \pmb{\xi_r}).
 \end{align*}
  From previous discussion, it holds under  hypothesis {\bf (H2)} that 
  $$\sup\big\{ \mathbf{T}_0(R) : R>0 \big\} < +\infty.$$ So it remains to show that  $\mathbf{K}_R\to 0$, as $R\to+\infty$.  
  
  \medskip
 


Making the following change of variables
 \begin{align*}
& \wt{a} + b \to x \,, \quad (\pmb{\eta_{p-r}}, \pmb{\wt{\xi}_r})  \to \Big(\pmb{\eta_{p-r}}, \pmb{\wt{\xi}_{r-1}}, x - \tau(  \pmb{\eta_{p-r}}  ) - \tau(\pmb{\wt{\xi}_{r-1}})  \Big)  \\
& \wt{b} + a \to y \,, \quad (\pmb{\wt{\eta}_{p-r}}, \pmb{\xi_r})  \to \Big(\pmb{\wt{\eta}_{p-r-1}}, y -     \tau(  \pmb{\wt{\eta}_{p-r-1}}   ) - \tau(\pmb{\xi_r}),    \pmb{\xi_r}   \Big)  
 \end{align*}
  yields
  \begin{align*}
  \mathbf{K}_{R} &= \omega_d^2\int_{\R^{2d}} dxdy \ell_R(x) \ell_R(y) \Psi_p^{(r,\delta)}(x,y),
  \end{align*}
  where 
 $ \Psi_p^{(r,\delta)}(x,y)$ is defined in \eqref{k6}. By our hypothesis {\bf (H3)}, we have as $R\to+\infty$, that $  \omega_d^{-2} \mathbf{K}_{R}$ is convergent to 
 \begin{align*}
&\quad \Psi_p^{(r,\delta)}(0,0)\\
&= \int_{\R^{2pd-2d}} d\pmb{\xi_r}  d\pmb{\eta_{p-r}} d\pmb{\wt{\xi}_{r-1}}  d\pmb{\wt{\eta}_{p-r-1}}   \vert\F f_p\vert^2\Big(\pmb{\eta_{p-r}},    \pmb{\wt{\xi}_{r-1}} , - \tau(  \pmb{\eta_{p-r}}  ) - \tau(\pmb{\wt{\xi}_{r-1}})     \Big)  \\
&  \times    \vert\F f_p\vert^2 \Big(\pmb{\wt{\eta}_{p-r-1}},  -     \tau(  \pmb{\wt{\eta}_{p-r-1}}   ) - \tau(\pmb{\xi_r}),    \pmb{\xi_r}   \Big)  \left(\prod_{i=1}^{r-1} \varphi(\xi_i)\varphi(\wt{\xi}_i)\right)   \varphi\big(  \tau(  \pmb{\eta_{p-r}}  ) +\tau(\pmb{\wt{\xi}_{r-1}} ) \big)\\
     &   \times \varphi(\xi_r)\varphi(\eta_{p-r} ) \varphi\big(  \tau(  \pmb{\wt{\eta}_{p-r-1}}  ) + \tau(\pmb{\xi_r} ) \big) \left(\prod_{j=1}^{p-r-1} \varphi(\eta_j)\varphi(\wt{\eta}_j)\right)\mathbf{1}_{\{ \| \tau(\pmb{\xi_r}) + \tau(\pmb{\eta_{p-r}}) \| < \delta    \}} \,,
\end{align*}
 which converges to zero, as $\delta\downarrow 0$.  This concludes our proof. \qedhere

\end{proof}

\bigskip

Recall the Hilbert-space notation $\H_\mu $ and $ \H_\mu^{\otimes p}$  from the beginning of Section \ref{sec2}. It is clear that 
$$\pmb{\xi_p}\in\R^{pd}\lmto F_R(\pmb{\xi_p}) := (\F f_p)( \pmb{\xi_p} ) \| \tau( \pmb{\xi_p}) \| ^{-d/2} J_{d/2}( R \| \tau( \pmb{\xi_p} ) \| )$$ 
belongs to $\H_\mu^{\otimes p}$ for each $R>0$, since $\F f_p\in \H_\mu^{\otimes p}$ and   $ \| \tau( \pmb{\xi_p}) \| ^{-d/2} J_{d/2}( R \| \tau( \pmb{\xi_p} ) \| )$ is uniformly bounded for any given $R>0$ (see  Lemma \ref{Bessel}).
     We can also define the corresponding contractions in this framework. For $h_1\in\H_\mu^{\otimes p}$ and $h_2\in\H_\mu^{\otimes q}$ ($p,q\in\N$), their $r$-contraction, with $0\leq r \leq p\wedge q$, belongs to $\H_\mu^{\otimes p+q-2r}$ and is defined by 
     \[
    (  h_1\otimes_{r,\mu} h_2  )\big(  \pmb{\xi_{p-r} },  \pmb{\eta_{p-r} }  \big) =  \int_{\R^{rd}} h_1(  \pmb{\xi_{p-r} },  \pmb{a_r }  ) \overline{h_2}\big( \pmb{\eta_{p-r} },  \pmb{a_r }\big) \, \mu(d\pmb{a_r} )\,.
          \]
   One should not confuse this notion with the one introduced in {\bf \small Notation A}.  
     
     \medskip

   With the notation $F_R$ and $ \otimes_{r,\mu}$, we can rewrite $\mathcal{I}_R$ in \eqref{kk2} as follows:
\begin{align*}
\mathcal{I}_R & =\int_{\R^{2pd}}d \mu ~ F_R( \pmb{\eta_{p-r}}, \pmb{\xi_r} )\overline{F_R}( \pmb{\eta_{p-r}}, \pmb{\wt{\xi}_r} ) \overline{F_R}(\pmb{\wt{\eta}_{p-r}}, \pmb{\xi_r} )      F_R( \pmb{\wt{\eta}_{p-r}}, \pmb{\wt{\xi}_r} )  \\
&= \int_{\R^{2pd}} \mu(d\pmb{\eta_{p-r}})  \mu(d\pmb{\wt{\eta}_{p-r}}) \big(F_R\otimes_{r,\mu} F_R\big)\big( \pmb{\eta_{p-r}},  \pmb{\wt{\eta}_{p-r}}   \big)\big(F_R\otimes_{r,\mu} F_R\big)\big(  \pmb{\wt{\eta}_{p-r}},\pmb{\eta_{p-r}}   \big)\\
& =\big\| F_R \otimes_{r,\mu} F_R \big\|^2_{\H_\mu^{\otimes 2p-2r}}\,,
\end{align*}
where we used the fact that $\big(F_R\otimes_{r,\mu} F_R\big)\big( \pmb{\eta_{p-r}},  \pmb{\wt{\eta}_{p-r}}   \big) = \overline{\big(F_R\otimes_{r,\mu} F_R\big)}\big(  \pmb{\wt{\eta}_{p-r}}, \pmb{\eta_{p-r}}   \big)$, which follows simply from the definition of contraction. Hence, we can formulate the following Fourth Moment Theorem.

\begin{theorem}  Fix an integer $p\geq 2$ and $f_p\in\H^{\odot p}$.   Assume  {\bf (H2)}, which implies that, in view of \eqref{need002},
\begin{align}\label{varp}
\sigma^2_p: = p! (2\pi)^d \lim_{R\to+\infty} \big\| F_R \big\|^2_{\H_\mu^{\otimes p}} \in [ 0, +\infty) \,.
\end{align}
Then, the following statements are equivalent: \medskip
\begin{enumerate}

 \item[\bf (S1)] $\dfrac{G_{p,R}}{R^{d/2}}$ converges in law to $N(0,\sigma_p^2)$, as $R\to+\infty$;
 \medskip
 \item[\bf (S2)] $\E\big[ G_{p,R}^4 \big] R^{-2d}$ converges to $3\sigma_p^4$, as $R\to+\infty$;
 \medskip
 \item[\bf (S3)] For every $r\in\{1, \ldots, p-1\}$, $\| F_R \otimes_{r,\mu} F_R\| _{\H_\mu^{\otimes 2p-2r}}\to 0$, as $R\to+\infty$.
  
\end{enumerate}     
 \end{theorem}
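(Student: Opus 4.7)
The plan is to reduce the statement to the classical Nualart--Peccati Fourth Moment Theorem applied to the rescaled sequence $H_R := G_{p,R}/R^{d/2} = I_p^W(h_R)$, where $h_R := R^{-d/2} g_{p,R} \in \H^{\odot p}$ lives in the fixed $p$-th Wiener chaos.

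First, I would verify that $\E[H_R^2] = p!\|h_R\|^2_{\H^{\otimes p}}$ converges to $\sigma_p^2$ under hypothesis {\bf (H2)}. This is a computation already done in the lead-up to equation \eqref{need002}: applying Plancherel, $\F g_{p,R}(\pmb{\xi_p}) = \F f_p(\pmb{\xi_p})\int_{B_R}e^{-\i x\cdot \tau(\pmb{\xi_p})}dx$, which by Lemma \ref{Bessel}\,(1) equals $(2\pi R)^{d/2}\F f_p(\pmb{\xi_p})\|\tau(\pmb{\xi_p})\|^{-d/2}J_{d/2}(R\|\tau(\pmb{\xi_p})\|)$. Therefore
\[
R^{-d}\|g_{p,R}\|^2_{\H^{\otimes p}} = (2\pi)^d\, \|F_R\|^2_{\H_\mu^{\otimes p}},
\]
and the right-hand side converges by the very definition of $\sigma_p^2$ in \eqref{varp}.

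Second, I would invoke the Nualart--Peccati Fourth Moment Theorem (\cite{FMT}; see also \cite[Theorem 5.2.7]{bluebook}), which states that for a sequence $\{I_p(h_R)\}$ in a fixed chaos with $\E[I_p(h_R)^2]\to \sigma_p^2$, the convergence in law to $N(0,\sigma_p^2)$, the convergence of the fourth moment to $3\sigma_p^4$, and the vanishing of $\|h_R\otimes_r h_R\|_{\H^{\otimes 2p-2r}}$ for all $r\in\{1,\dots,p-1\}$ are mutually equivalent. The case $\sigma_p^2=0$ is trivial: hypercontractivity on the $p$-th chaos gives $\E[H_R^4]\le C_p(\E[H_R^2])^2\to 0$, and $\|h_R\otimes_r h_R\|\le \|h_R\|^2\to 0$, so the three statements all hold vacuously.

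Third, I would translate the contraction condition on $h_R$ into the condition (S3) on $F_R$. Since $h_R\otimes_r h_R = R^{-d}(g_{p,R}\otimes_r g_{p,R})$, this amounts to showing
\[
R^{-2d}\|g_{p,R}\otimes_r g_{p,R}\|^2_{\H^{\otimes 2p-2r}} = (2\pi)^{2d}\|F_R\otimes_{r,\mu} F_R\|^2_{\H_\mu^{\otimes 2p-2r}},
\]
which is exactly the identity $\mathcal{I}_R = \|F_R\otimes_{r,\mu}F_R\|^2_{\H_\mu^{\otimes 2p-2r}}$ derived in Steps 1--3 of the proof of Theorem \ref{thm2} (indeed, Steps 1--2 there rewrite the $H$-inner products through Plancherel, and Step 3 accounts for the spatial integrations over $B_R^4$ via Lemma \ref{Bessel}; combined, these yield the displayed identity). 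Consequently, (S3) is equivalent to the contraction condition of the Fourth Moment Theorem applied to $h_R$, completing the chain (S1)$\Leftrightarrow$(S2)$\Leftrightarrow$(S3).

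Since all three ingredients---the variance asymptotics, the Nualart--Peccati theorem, and the Plancherel-type identification of contractions---are either proved earlier in the paper or standard, there is no serious obstacle. The only point requiring mild care is the degenerate case $\sigma_p^2=0$, which as noted above is handled by a direct hypercontractivity argument rather than by quoting the Nualart--Peccati theorem (which is usually stated for positive limiting variance).
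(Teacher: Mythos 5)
Your proposal is correct and follows essentially the same route the paper takes: the theorem is exactly the classical Nualart--Peccati Fourth Moment Theorem applied to $R^{-d/2}g_{p,R}$, with the variance identified through \eqref{need002} and the contractions identified through the identity $\mathcal{I}_R=\|F_R\otimes_{r,\mu}F_R\|^2_{\H_\mu^{\otimes 2p-2r}}$ established in Steps 1--3 of the proof of Theorem \ref{thm2}. Your explicit treatment of the degenerate case $\sigma_p^2=0$ via hypercontractivity is a small but welcome addition that the paper leaves implicit.
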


\begin{remark}  
(i) Recall from Lemma \ref{Bessel} that on $\R_+$, $J_{d/2}(x)\leq C\big(1 \wedge \frac{1}{\sqrt{x} }  \big) $. Therefore, we  obtain the following   estimates:
\begin{align*}
\| F_R \otimes_{r,\mu} F_R\| _{\H_\mu^{\otimes 2p-2r}} &\leq C \big\| F^{(1)} \otimes_{r,\mu} F^{(1)}  \big\| _{\H_\mu^{\otimes 2p-2r}}
\end{align*}
and
\begin{align*}
\| F_R \otimes_{r,\mu} F_R\| _{\H_\mu^{\otimes 2p-2r}} &\leq \frac{C}{R^2} \big\| F^{(2)} \otimes_{r,\mu} F^{(2)}  \big\| _{\H_\mu^{\otimes 2p-2r}}, 
\end{align*}
with $F^{(j)}(\pmb{\xi_p} ) : = \vert\F f_p\vert( \pmb{\xi_p}  )     \| \tau( \pmb{\xi_p}) \| ^{-\frac{d+j-1}{2}}$, $j=1,2$. As a consequence,

 \medskip
 
 (1) if $\| F^{(1)} \otimes_{r,\mu} F^{(1)}   \| _{\H_\mu^{\otimes 2p-2r}} <\infty$  and $\mu$ admits a spectral density, then by the dominated convergence theorem, we have $\| F_R \otimes_{r,\mu} F_R\| _{\H_\mu^{\otimes 2p-2r}} \to 0$, which implies the   Gaussian fluctuation;

\medskip

(2) if $\| F^{(2)} \otimes_{r,\mu} F^{(2)}   \| _{\H_\mu^{\otimes 2p-2r}} <\infty$, we deduce from \eqref{dist} that 
$$
d_{\rm TV}\Big(G_{p,R} /\sigma_{p,R} , N(0,1)\Big) \leq C/R \,.
$$
(ii) In view of the Cauchy-Schwarz inequality for contractions, one has 
$$
\| F^{(j)} \otimes_{r,\mu} F^{(j)}   \| _{\H_\mu^{\otimes 2p-2r}} \leq \| F^{(j)} \| ^2_{\H_\mu^{\otimes p}}\quad\text{for $j=1,2$.}
$$ 
So one may intend to assume 
 \begin{align}
 \| F^{(1)} \| _{\H_\mu^{\otimes p}} \wedge \| F^{(2)} \| _{\H_\mu^{\otimes p}} < \infty \,,    \label{intend}
 \end{align}
which, however, is not reasonable in our framework.  In fact,  \eqref{varp} and \eqref{x=0} tell us that $\| F_R \|^2_{\H_\mu^{\otimes p}}$, which is equal to
\begin{align*}
 \frac{R^d}{2^d \Gamma(\frac{d}{2} +1   )^2} \int_{\{ \tau(\pmb{\xi_p} ) =0 \}}  \vert\F f_p\vert^2(\pmb{\xi_p})    \mu(d\pmb{\xi_p}) + \int_{\{ \| \tau(\pmb{\xi_p} ) \| >0 \}}   \vert \F f_p\vert^2(\pmb{\xi_p}) \ell_R(  \tau(\pmb{\xi_p} ))    \mu(d\pmb{\xi_p}) ,
\end{align*}
converges to $\dfrac{\sigma_p^2}{p! (2\pi)^d}$; if we assume \eqref{intend} or we assume the weaker condition  
\begin{align*}
 \int_{\R^{pd}} \left(   \|   \tau(\pmb{\xi_p}) \| ^{-d-1} \wedge   \|   \tau(\pmb{\xi_p}) \| ^{-d} \right)    \vert\F f_p\vert^2 (\pmb{\xi_p})     ~\mu(d\pmb{\xi_p}) <\infty\,,
 \end{align*}
then the integral over $\{ \| \tau(\pmb{\xi_p} ) \| >0 \}$ vanishes asymptotically, so that we can write
 \begin{align}\label{converge1}
  \frac{R^d}{2^d \Gamma(\frac{d}{2} +1   )^2} \int_{\{ \tau(\pmb{\xi_p} ) =0 \}}    \vert\F f_p\vert^2 (\pmb{\xi_p})   ~\mu(d\pmb{\xi_p})  \xrightarrow{R\to+\infty}   \frac{\sigma_p^2}{p! (2\pi)^d} \,.
 \end{align}
 This forces the integral in \eqref{converge1}   to be zero by dominated convergence, so that $\sigma_p^2 = 0$.

 \end{remark}
 
 \medskip

\subsection{Chaotic central limit theorems} \label{23}

As a continuation of previous section,  we consider the case of infinitely many chaoses and we derive  a chaotic central limit theorem. 
Recall $F\in L^2\big(\Omega\big)$  admits the following chaos expansion \eqref{Fchaos} with Hermite rank $m\geq 1$:
\begin{equation*}
F(W) = \sum_{p\geq m} I^W_p(f_p) \quad{\rm with} \quad f_p\in \H^{\odot p}\,.
\end{equation*}

Let us introduce the following natural hypothesis:
\[ 
{\bf (H4)} \qquad\qquad   \sum_{p\geq m}   p!       \int_{\R^{2pd}}  d\pmb{t_p} \, d\pmb{s_p}  \,    |f_p|(\pmb{s_p}) |f_p|(\pmb{t_p}) \int_{\R^d} \prod_{i=1}^p \vert \gamma\vert\big(t_i-s_i+z\big) dz  < \infty.  \qquad\qquad
 \]
 
 \medskip
 Recall the notation $\kappa_p$ from \eqref{kappa2} and we put
 \begin{equation*}  
\| f_p\|_{\kappa_p} ^2:= \int_{\R^{2dp}}    f_p(\pmb{s_p})f_p(\pmb{t_p})     \kappa_p( \pmb{t_p} - \pmb{s_p}   )  \, d\pmb{t_p} \, d\pmb{s_p} \,.
\end{equation*}
So under   {\bf (H4)},
\begin{equation} \label{sigma}
  \sigma^2 :=  \omega_d\sum_{p\geq m}   p!       \| f_p\|_{\kappa_p}^2\in[0,\infty).
 \end{equation}

Note that an immediate consequence of our hypothesis {\bf (H4)} is the following   result
\begin{align}\label{ded}
\lim_{N\to+\infty} \sup_{R>0} R^{-d}\sum_{q\geq N+1}   \Var\left( \int_{B_R} I^{W}_p(f_p^x) \, dx \right)   = 0 \,.
\end{align}
In fact, one can write,  similarly as before,
\begin{align*}
&\qquad \sup_{R>0}  \frac{1}{\omega_d R^d}\sum_{q\geq N+1}   \Var\left( \int_{B_R} I^{W}_p(f^x_p) \, dx \right) \\
&  = \sum_{q\geq N+1}  p!       \int_{\R^{2pd}}  d\pmb{t_p} \, d\pmb{s_p} \,    f_p(\pmb{s_p})f_p(\pmb{t_p})  \left( \int_{ \R^d} \frac{\text{vol}\big( B_R \cap B_R(-z)   \big) }{\text{vol}(B_R)}    \prod_{i=1}^p\gamma\big(t_i-s_i+z\big) dz  \right) \\
&\leq   \sum_{q\geq N+1}  p!       \int_{\R^{2pd}}  d\pmb{t_p} \, d\pmb{s_p} \,  \big\vert  f_p(\pmb{s_p})f_p(\pmb{t_p})  \big\vert  \left( \int_{ \R^d}     \prod_{i=1}^p  \vert\gamma\vert \big(t_i-s_i+z\big) dz  \right)  \xrightarrow{N\to+\infty} 0 \,.
\end{align*}

Now we state our main result   as a consequence of \eqref{ded}, Theorems \ref{thm1} and  \ref{thm00}.
 
\begin{theorem} \label{CCLT1}   Suppose $F\in L^2(\Omega )$ admits the   chaos expansion \eqref{Fchaos} with Hermite rank $m\geq 2$
and assume that {\bf (H4)} is satisfied.
Suppose that for each $p \geq m$,    the  kernel $f_p\in\H^{\odot p}$  satisfies  
{\rm (i)} {\it or} {\rm (ii)} in Theorem \ref{thm1}. Let $\sigma^2$ be given by \eqref{sigma}.
Then,  as $R\to+\infty$,
\[
R^{-d/2} \int_{B_R} U_xF(W) \, dx \quad\text{converges in law to} \quad N(0, \sigma^2 ) \,.
\]
\end{theorem}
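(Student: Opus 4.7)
The plan is to invoke the chaotic central limit theorem stated as Theorem \ref{thm00}, applied to the sequence (indexed by $R \to +\infty$) of random variables $R^{-d/2} \int_{B_R} U_x F\,dx$, whose chaos expansion has kernels
\[
f_{p,R} := R^{-d/2} g_{p,R} = R^{-d/2} \int_{B_R} f_p^x\,dx \in \H^{\odot p}, \qquad p \geq m,
\]
and $f_{p,R} \equiv 0$ for $p < m$. Thus the task reduces to verifying the three conditions (i), (ii), (iii) of Theorem \ref{thm00}, and identifying the limiting variance.

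For condition (i), note that hypothesis \textbf{(H4)} implies \textbf{(H1)} for every fixed $p \geq m$, since each summand in \textbf{(H4)} is finite. Therefore the computation leading to \eqref{k1} applies term-by-term: using $\text{vol}(B_R \cap B_R(-z))/\text{vol}(B_R) \to 1$ together with the dominated convergence theorem, we obtain
\[
p!\,\|f_{p,R}\|_{\H^{\otimes p}}^2 = R^{-d}\,\Var(G_{p,R}) \longrightarrow p!\,\omega_d\,\|f_p\|_{\kappa_p}^2 =: \sigma_p^2.
\]
Summing over $p \geq m$ then yields $\sum_p \sigma_p^2 = \sigma^2$ as defined in \eqref{sigma}. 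For condition (ii), this is precisely the contraction estimate established in the proof of Theorem \ref{thm1}: under either hypothesis \rm (i) or \rm (ii) of that theorem, we have for each $p \geq m$ and each $r \in \{1, \dots, p-1\}$,
\[
R^{-2d}\,\|g_{p,R} \otimes_r g_{p,R}\|_{\H^{\otimes(2p-2r)}}^2 \longrightarrow 0
\]
as $R \to +\infty$. Condition (iii) is exactly the uniform tail bound \eqref{ded}, which was derived directly from hypothesis \textbf{(H4)}: since $p!\|f_{p,R}\|_{\H^{\otimes p}}^2 = R^{-d}\Var(G_{p,R})$, the bound \eqref{ded} gives
\[
\lim_{N \to +\infty} \sup_{R > 0} \sum_{p \geq N} p!\,\|f_{p,R}\|_{\H^{\otimes p}}^2 = 0,
\]
which is stronger than the $\limsup$ required in Theorem \ref{thm00}(iii).

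With the three conditions verified, Theorem \ref{thm00} yields the desired convergence in law to $N(0, \sigma^2)$. There is no genuine obstacle: all the heavy lifting has already been done in Theorem \ref{thm1} (for the contraction bounds on each fixed chaos) and in the derivation of \eqref{ded} (for the uniform-in-$R$ tail control). The only mildly delicate point is to notice that \textbf{(H4)} simultaneously delivers the per-chaos variance convergence, by reducing to \textbf{(H1)}, and the tail control \eqref{ded}, ensuring the Wiener chaos series of the limit is summable and compatible with the chaotic CLT framework.
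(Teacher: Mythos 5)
Your proposal is correct and follows exactly the route the paper takes: Theorem \ref{CCLT1} is obtained by feeding the per-chaos variance convergence (from {\bf (H1)}, implied termwise by {\bf (H4)}), the contraction estimates of Theorem \ref{thm1}, and the uniform tail bound \eqref{ded} into the chaotic CLT of Theorem \ref{thm00}. Your write-up simply makes explicit the verification of conditions (i)--(iii) that the paper leaves to the reader.
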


  \begin{remark}\label{rem66}  (1) In Theorem \ref{CCLT1}, we exclude the first chaos for the following obvious reason. Under the assumption that $\{f_1, \gamma\}\subset L^1(\R^d)$, 
  $
  R^{-d/2} \int_{B_R} I_1^W(f_1^x)dx$
 is a  centered Gaussian  random variable with variance tending to  $\omega_d\| f_1\|^2_{L^1(\R^d)} \int_{\R^d}\gamma(z)dz$, as $R\to+\infty$; see point (i) in Remark \ref{rem1}.

  \medskip
  
  (2)   Suppose $\gamma(0) < + \infty$ or equivalently $\mu(\R^d)<+\infty$, then $\gamma=\F\mu$ is a   function bounded by $\gamma(0)$.  If $\gamma \in L^{m}(\R^d)$ (for some integer $m\geq 1$), then $\gamma \in L^p(\R^d)$ for any 
    $p\ge m$, so that $ \| \gamma\|^p_{L^p(\R^d)} \leq \gamma(0)^{p-m}  \| \gamma\|^m_{L^m(\R^d)} $. As a result,
    \begin{align*}
    &\quad  \sum_{p\geq m}   p!       \int_{\R^{2pd}}  d\pmb{t_p} \, d\pmb{s_p}  \,    |f_p|(\pmb{s_p}) |f_p|(\pmb{t_p}) \int_{\R^d} \prod_{i=1}^p\vert \gamma\vert\big(t_i-s_i+z\big) dz \\
     &\leq  \sum_{p\geq m}   p!    \| \gamma\|^p_{L^p(\R^d)} \| f_p \|^2_{L^1(\R^{pd})} \leq C \sum_{p\geq m}   p!   \gamma(0)^p \| f_p \|^2_{L^1(\R^{pd})} .
    \end{align*}
    This tells us that  condition \eqref{gamma0} implies {\bf (H4)}, so Theorem \ref{cor0} stands as an easy corollary of our Theorem \ref{CCLT1} and previous point (1).

 \end{remark}

  We can formulate another chaotic central limit theorem based on the spectral measure.      
  
\begin{theorem}\label{CCLT2}  Suppose that  $F\in L^2(\Omega )$ admits the   chaos expansion \eqref{Fchaos} with Hermite rank $m\geq 1$.
Assume that  the spectral measure has a density. 
Suppose that for each $p \geq m$,  the function $\Psi_p$ defined in \eqref{Psi} is continuous at zero and the following boundedness condition holds
(which  implies {\bf (H2)} for each $p$):
\[
{\bf (H4')} \qquad\qquad\qquad\qquad\qquad\qquad    \sum_{p\geq m}   p!      \| \Psi_p\|_\infty  < \infty.\qquad\qquad\qquad\qquad\qquad\qquad\qquad
\]
 Assume additionally that  hypothesis  {\bf (H3)}  holds for each $p\geq m$. Then, 
\[
R^{-d/2} \int_{B_R} U_xF(W) \, dx \xrightarrow[\rm law]{R\to+\infty} N\left(0, (2\pi)^d\omega_d \sum_{p\geq m} p! \Psi_p(0) \right) \,.
\]
  \end{theorem}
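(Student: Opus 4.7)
The plan is to apply the chaotic central limit theorem (Theorem \ref{thm00}) to the sequence
\[
F_R := R^{-d/2}\int_{B_R}U_xF(W)\,dx = \sum_{p\geq m} I_p^W\!\left(R^{-d/2} g_{p,R}\right),
\]
viewed as an $R$-indexed family of centered $L^2$ random variables with kernels $R^{-d/2}g_{p,R}\in\H^{\odot p}$. It then suffices to verify the three hypotheses (i)--(iii) of Theorem \ref{thm00}; the limiting variance will automatically be $\sigma^2=\sum_{p\geq m}\sigma_p^2$, and we will identify it at the end. The standing assumption that the spectral measure has a density is what makes the Fourier-analytic apparatus of Section \ref{22} available for each chaos.

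For hypothesis (i), the computation carried out in Section \ref{22} (just before equation \eqref{need002}) produces the exact identity
\[
\frac{\Var(G_{p,R})}{R^d} = p!\,(2\pi)^d\omega_d\int_{\R^d}\ell_R(x)\,\Psi_p(x)\,dx,
\]
where $\ell_R$ is the approximation of the identity from Lemma \ref{Bessel}(3). Since each $\Psi_p$ is bounded (by {\bf (H4')}) and continuous at $0$, the right-hand side tends to $p!\,(2\pi)^d\omega_d\Psi_p(0)$. Thus
\[
p!\,\big\|R^{-d/2}g_{p,R}\big\|^2_{\H^{\otimes p}} \longrightarrow \sigma_p^2 := p!\,(2\pi)^d\omega_d\,\Psi_p(0), \qquad R\to+\infty,
\]
for every $p\geq m$, which is condition (i).

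Hypothesis (ii) is the contraction condition on each fixed chaos $p\geq 2$. The combined hypotheses ``$\Psi_p$ continuous at $0$'' and $\|\Psi_p\|_\infty<\infty$ (implied by {\bf (H4')}) are precisely {\bf (H2)} for the level $p$; together with {\bf (H3)}, which is assumed chaos by chaos, the Fourier-side analysis in the proof of Theorem \ref{thm2} yields
\[
R^{-2d}\,\big\|g_{p,R}\otimes_r g_{p,R}\big\|^2_{\H^{\otimes(2p-2r)}}\longrightarrow 0 \qquad (r=1,\ldots,p-1).
\]
No new calculation is needed: we simply import that estimate for each $p$ separately. (When $m=1$, the first-chaos component $R^{-d/2}G_{1,R}$ is already Gaussian, so (ii) is vacuous for $p=1$.)

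Finally, for hypothesis (iii) --- the uniform tail control in $R$ --- the variance formula of Step (i) together with $\int\ell_R\,dx=1$ gives the uniform bound
\[
\frac{\Var(G_{p,R})}{R^d}\leq p!\,(2\pi)^d\omega_d\,\|\Psi_p\|_\infty,
\]
so that $\sum_{p\geq N}p!\,\|R^{-d/2}g_{p,R}\|^2_{\H^{\otimes p}}\leq (2\pi)^d\omega_d\sum_{p\geq N}p!\,\|\Psi_p\|_\infty$, whose $\limsup_{R\to\infty}$ vanishes as $N\to\infty$ by {\bf (H4')}. The same bound shows that $\sigma^2=\sum_{p\geq m}\sigma_p^2 = (2\pi)^d\omega_d\sum_{p\geq m}p!\,\Psi_p(0)<\infty$, matching the variance in the statement. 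All three conditions of Theorem \ref{thm00} are verified and the conclusion follows. The only real bookkeeping point --- which I view as the main, though mild, obstacle --- is making sure that (i) and (ii) genuinely require {\bf (H2)}+{\bf (H3)} only at each individual level $p$, so that the summability across levels is used exclusively for the tail bound in (iii); this is indeed the case, and the argument is assembled cleanly.
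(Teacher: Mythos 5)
Your proposal is correct and follows essentially the same route as the paper: the paper also treats the first chaos separately (it is already Gaussian), invokes Theorem \ref{thm2} chaos by chaos under \textbf{(H2)}+\textbf{(H3)} for the per-level variance limit and contraction condition, and verifies the uniform tail bound \eqref{ded} exactly as you do, via $\Var(G_{p,R})/R^d = p!(2\pi)^d\omega_d\int\ell_R\Psi_p\le p!(2\pi)^d\omega_d\|\Psi_p\|_\infty$ and \textbf{(H4')}. No substantive differences.
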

 \begin{proof}  For $m=1$, we should consider the first chaos and it is clear that $R^{-d/2} G_{1,R}$ is centered Gaussian with variance tending to $\omega_d (2\pi)^d\Psi_1(0)$.   
 
 Now let us consider higher-order chaoses. For each $p\geq m\vee 2$,  hypotheses {\bf (H2)} and {\bf (H3)} hold true. This implies that $G_{p,R}R^{-d/2}$ converges in law to $N(0,\sigma_p^2)$, with $\sigma_p$ introduced in Theorem \ref{thm1}. In view of the chaotic central limit theorem (Theorem \ref{thm00}), it remains to check  condition \eqref{ded}. We can write
 \begin{align*}
 \sum_{p\geq N+1}     \frac{\Var\big( G_{p,R}\big)}{\omega_d R^d} =(2\pi)^d  \sum_{p\geq N+1}    p! \int_{\R^d} \ell_R(x) \Psi_p(x) \, dx \leq (2\pi)^d  \sum_{p\geq N+1}    p!  \|\Psi_p \| _\infty \,,
 \end{align*}
where the last inequality follows from the fact that $\ell_R(x)dx$ is a probability measure on $\R^d$; so hypothesis {\bf (H4')} implies \eqref{ded}. Hence, our proof is finished.\qedhere

 \end{proof}

  \begin{corollary}  Suppose that  $F\in L^2(\Omega )$ admits the   chaos expansion \eqref{Fchaos} with Hermite rank $m\geq 1$ and for each $p\geq m$, the kernel $f_p$ belongs to $ L^1(\R^{pd})\cap \H^{\odot p}$. Assume that the spectral measure $\mu$ is finite with spectral density $\varphi$ such that $\varphi$ is uniformly bounded with continuity at zero and 
  \begin{align}\label{ccond}
   \sum_{p\geq m} p!  \| \F f_p \|_\infty^2  \|\varphi\|_{L^1(\R^d)}^{p}   < \infty \,.
  \end{align}
  Then, ${\displaystyle
R^{-d/2} \int_{B_R} U_xF(W) \, dx \xrightarrow[\rm law]{R\to+\infty} N\left(0, (2\pi)^d\omega_d \sum_{p\geq m} p! \Psi_p(0) \right) \,.
}$
 \end{corollary}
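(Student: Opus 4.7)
The plan is to apply Theorem \ref{CCLT2}: it reduces the task to verifying, for each $p \geq m$, that $\Psi_p$ is continuous at zero, that the summation hypothesis \textbf{(H4')} holds, and that hypothesis \textbf{(H3)} is satisfied. The key input throughout is that $f_p \in L^1(\R^{pd})$ makes $\F f_p$ bounded (by $\|f_p\|_{L^1(\R^{pd})}$) and uniformly continuous, while $\varphi \in L^1(\R^d) \cap L^\infty(\R^d)$ gives enough integrability to run dominated convergence.

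For \textbf{(H4')}, I would pull $\|\F f_p\|_\infty^2$ out of \eqref{Psi}, bound $\varphi(x-\tau(\pmb{\xi_{p-1}}))$ by $\|\varphi\|_\infty$, and integrate the remaining $p-1$ factors $\varphi(\xi_i)$ to obtain $\|\Psi_p\|_\infty \leq \|\F f_p\|_\infty^2 \|\varphi\|_\infty \|\varphi\|_{L^1(\R^d)}^{p-1}$. Assumption \eqref{ccond} then yields $\sum_{p\geq m} p!\|\Psi_p\|_\infty < \infty$, up to the harmless factor $\|\varphi\|_\infty/\|\varphi\|_{L^1(\R^d)}$. An analogous computation on \eqref{k6}—bounding each of the two $|\F f_p|^2$ factors by $\|\F f_p\|_\infty^2$, each of the two $(x,y)$-dependent $\varphi$ factors by $\|\varphi\|_\infty$, and integrating the remaining $2p-2$ factors $\varphi(\cdot)$—gives $\|\Psi_p^{(r,\delta)}\|_\infty \leq \|\F f_p\|_\infty^4 \|\varphi\|_\infty^2 \|\varphi\|_{L^1(\R^d)}^{2p-2}$, establishing the boundedness part of \textbf{(H3)}.

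The continuity-at-zero statements are the delicate step, because $\varphi$ is only assumed to be continuous at the single point $0$. For $\Psi_p$, I would write $\Psi_p(x) - \Psi_p(0)$ as a telescoping sum: one term involving the increment of $|\F f_p|^2$ (with $\varphi(-\tau)$ kept fixed) and one involving $\varphi(x-\tau) - \varphi(-\tau)$ (with $|\F f_p|^2(\pmb{\xi_{p-1}}, x-\tau)$ kept). The first term tends to $0$ by dominated convergence, using the continuity of $\F f_p$ and the integrable dominator $2\|\F f_p\|_\infty^2 \|\varphi\|_\infty \prod_{i=1}^{p-1} \varphi(\xi_i)$. For the second term, I would integrate in the variable $\xi_{p-1}$ first: after the change of variable $\rho = -\tau(\pmb{\xi_{p-1}})$, the inner integral collapses to $\|\varphi\|_\infty \|\varphi(\cdot + x) - \varphi\|_{L^1(\R^d)}$, which vanishes as $x\to 0$ by the standard continuity of translation in $L^1(\R^d)$; the remaining $p-2$ outer integrations contribute the finite factor $\|\varphi\|_{L^1(\R^d)}^{p-2}$. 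The continuity of $\Psi_p^{(r,\delta)}$ at $(0,0)$ is handled by the same splitting applied to each of the four $(x,y)$-dependent factors, and the indicator $\mathbf{1}_{\{\|\tau(\pmb{\xi_r}) + \tau(\pmb{\eta_{p-r}})\| < \delta\}}$ does not depend on $(x,y)$ and causes no obstruction.

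The principal obstacle is precisely this substitution of pointwise continuity of $\varphi$ by $L^1$-translation continuity, which is what makes the argument work under the weak hypothesis "continuous only at zero." Once \textbf{(H4')}, continuity of each $\Psi_p$ at zero, and \textbf{(H3)} are in hand, Theorem \ref{CCLT2} delivers the announced convergence with limiting variance $(2\pi)^d\omega_d \sum_{p\geq m} p!\, \Psi_p(0)$.
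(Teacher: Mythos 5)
Your proposal is correct and follows the same overall route as the paper: reduce everything to Theorem \ref{CCLT2} by verifying boundedness and continuity at zero of $\Psi_p$, hypothesis {\bf (H3)}, and the summability condition {\bf (H4')}, the latter via the bound $\|\Psi_p\|_\infty \le \|\F f_p\|_\infty^2 \|\varphi\|_\infty \|\varphi\|_{L^1(\R^d)}^{p-1}$, which is exactly the estimate the paper extracts (there via the Young-convolution inequality \eqref{conv-bdd} and $\|\varphi\|_{L^{p/(p-1)}(\R^d)}^p \le \|\varphi\|_\infty \|\varphi\|_{L^1(\R^d)}^{p-1}$). Where you diverge is in how the continuity statements are established: the paper simply observes that $\varphi \in L^1 \cap L^\infty$ implies $\varphi \in L^{p/(p-1)}(\R^d)$ and then invokes Lemmas \ref{lem2.1} and \ref{lem2.3}, whose proofs rest on translation continuity in $L^{p/(p-1)}$ together with H\"older; you instead redo the telescoping argument from scratch, trading the $L^{p/(p-1)}$-translation continuity for $L^1$-translation continuity after absorbing one extra factor of $\varphi$ into $\|\varphi\|_\infty$ via a change of variables. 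Both are valid; your version is self-contained and makes transparent that only $\varphi \in L^1 \cap L^\infty$ is used, while the paper's is shorter because the lemmas are already available. One small case you gloss over: for $p=1$ there is no integration variable, so the change-of-variable/translation-in-$L^1$ mechanism is vacuous and $\Psi_1(x) = |\F f_1|^2(x)\varphi(x)$ is continuous at zero only because $\varphi$ is assumed continuous at zero --- this is precisely the (only) place that hypothesis is needed, as the paper points out, but since it is assumed, this is not a gap.
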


 \begin{proof} Note that  $\mu$ is finite, which is equivalent to $\varphi\in L^1(\R^d)$. This implies with boundedness of $\varphi$ that $\varphi\in L^q(\R^d)$ for any $q>1$. It is clear that  for any $p\geq 2\vee m$, $f_p\in L^1(\R^d)\cap \H^{\odot p}$ and $\gamma\in L^{p/(p-1)}(\R^d)$, so Lemma \ref{lem2.3} and Lemma \ref{lem2.1} ensure that  hypotheses {\bf (H2)} and {\bf (H3)} are valid on the $p$th chaos.  
 
 If $F$ has the first chaos with $f_1\in L^1(\R^d)$, then $\Psi_1$ is uniformly bounded with continuity at zero (the continuity of $\varphi$ at zero is only required at this point). Therefore, 
 $
G_{1, R}  R^{-d/2} 
 $ converges in law to a centered Gaussian with variance $(2\pi)^d \Psi_1(0)$.

 It remains to notice that $\Psi_p(x) \leq \big\| |\F f_p |^2 \big\|_\infty \varphi^{\ast p}(x) \leq   \big\| |\F f_p |^2 \big\|_\infty \| \varphi\|^p_{L^{p/(p-1)}(\R^d)}$ by \eqref{conv-bdd}. We know that
 $
  \| \varphi\|^p_{L^{p/(p-1)}(\R^d)} \leq \|\varphi\|_\infty  \|\varphi\|_{L^1(\R^d)}^{p-1}
 $
 so that {\bf (H4')} holds in this setting. To see this, we write 
 \[
 \sum_{p\geq m} p! \| \Psi_p\|_\infty \leq   C \sum_{p\geq m} p!  \| \F f_p \|_\infty^2  \|\varphi\|_{L^1(\R^d)}^{p}   \,,
 \]
that is,  {\bf (H4')} is implied by \eqref{ccond}.  Hence, the proof is done by applying Theorem \ref{CCLT2}.            \end{proof}

\section{Proof of Theorems \ref{thmSHE},   \ref{Nchaotic} and \ref{NCF} }\label{sec3}

Let  $u_{t,x}$ be the mild solution to the   linear stochastic heat equation \eqref{pl1}
with initial condition $u_{0,x}=1$ for all $x\in \R^d$,
driven by a Gaussian noise with temporal and spatial covariance kernels being $\gamma_0$  and $\gamma_1$, respectively. We assume  $\gamma_0:\R\to [0,\infty]$  locally integrable and   the Fourier transform of  $\gamma_1$ is a nonnegative tempered measure $\mu_1$ that satisfies the Dalang's condition \eqref{Da}.

Recall that
 \[
 A_t(R) = \int_{B_R} \big( u_{t,x} - 1 \big) \, dx = \sum_{p=1} ^\infty  I_p^W\left( \int_{B_R} f_{t,x,p}dx \right),
 \]
 where, for any integer $p\ge 1$,    $f_{t,x,p}$ is the kernel appearing in the Wiener chaos expansion of $u_{t,x}$ (see \eqref{form-f}).

Let us introduce some notation for later convenience. 

\medskip
\noindent{\bf Notation B.}   For given $t>0$ and $p\in\N$, $\Delta_p(t) = \{ \pmb{s_p}\in\R_+^p: t> s_1 > \ldots > s_p > 0\}$ and $\text{SIM}_p(t)=\{\pmb{s_p}\in\R_+^p: s_1 + \cdots +s_p \leq t \} $. For $\sigma\in\mathfrak{S}_p$, we write $\pmb{x^\sigma_p} =  (x^\sigma_1, \dots, x^\sigma_p) =  (x_{\sigma(1)}, \dots, x_{\sigma(p)})  $, so $\pmb{s^\sigma_p}\in\Delta_p(t)$ means $t > s_{\sigma(1)} > \cdots > s_{\sigma(p)}$ and we write $\int_{\Delta_p(t)} d\pmb{s^\sigma_p}$ for $\int_{[0,t]^p} d\pmb{s_p} {\bf 1}_{\Delta_p(t)}( \pmb{s^\sigma_p}  )$. For fixed integers $1\leq r \leq p-1$, the $r$-contraction $f\otimes_r g$ of $f,g \in\mathscr{H}^{\otimes p}$ is the element in $\mathscr{H}^{\otimes 2p-2r}$ given by 
 \begin{align*}
& \big(f \otimes_{r}g\big)\big( \pmb{s_{p-r}}, \pmb{\wt{s}_{p-r}}, \pmb{\xi_{p-r}}, \pmb{\wt{\xi}_{p-r}}     \big)= \int_{\R_+^{2r} }  d\pmb{a_r}   d\pmb{\wt{a}_r}  \left( \prod_{i=1}^r \gamma_0(a_i-\wt{a}_i) \right)  \int_{ \R^{2dr} }  d\pmb{x_r}     d\pmb{\wt{x}_r}  \\
 &\qquad\qquad\times \left(  \prod_{i=1}^r \gamma_0(x_i-\wt{x}_i) \right)  f( \pmb{s_{p-r}}, \pmb{a_r},  \pmb{\xi_{p-r}} ,  \pmb{x_r}    )g( \pmb{\wt{s}_{p-r}}, \pmb{\wt{a}_r},  \pmb{\wt{\xi}_{p-r}} ,  \pmb{\wt{x}_r}    ),
\end{align*}
 which may be a generalized function.

 \medskip

Here is the plan  for the proof of Theorems \ref{thmSHE} and \ref{Nchaotic}.  Section \ref{sec310}  deals with computing the limit of the  covariance  function of the process  $A_t(R)$ as $R\rightarrow +\infty$,  provided that  $\gamma_1(\R^d)$ is finite. Section
 \ref{sec311} is devoted to the proof of the {\it convergence of the finite-dimensional distributions}, and we prove the {\it tightness} of $\{R^{-d/2} A_t(R), t\ge 0 \}$ in   Section \ref{312}    under the extra assumption \eqref{ADDc}. As a by-product of the computations  in Section \ref{sec310}, we provide a proof of Theorem \ref{Nchaotic}  in Section \ref{sec310a}.

 \subsection{Limiting covariance structure in Theorem \ref{thmSHE}} \label{sec310}
 The main ingredient  is the following \emph{Feymann-Kac representation}.
 
 \begin{lemma}[Feynman-Kac formula]  \label{FKAC}   Let $\gamma_0, \gamma_1$ be given as in Theorem \ref{thmSHE} and we fix  $t, s>0$. Then for any $x,y\in\R^d$, we have 
\begin{equation*}
 \phi_{t,s}(x-y) := \E\big[ u_{t,x} u_{s,y} \big] = \E\left[  e^{\beta_{t,s}(x-y)}   \right] 
 \end{equation*}
 with 
 \begin{equation*} \beta_{t,s}(z):=  \int_0^{t}\int_0^{s} \gamma_0(u-v) \gamma_1(X_u^1 - X_v^2 + z) dudv, 
 \end{equation*}
 where $X^1, X^2$ are two independent standard Brownian motions on $\R^d$ that start at zero.
  \end{lemma}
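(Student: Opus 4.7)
The plan is to match the Wiener chaos expansion of $u_{t,x}u_{s,y}$, taken in expectation, with the power series expansion of $\E[\exp(\beta_{t,s}(x-y))]$, showing that they agree term by term. Starting from the explicit chaos decomposition $u_{t,x} = 1 + \sum_{n\geq 1} I_n^W(f_{t,x,n})$ with $f_{t,x,n}$ given in \eqref{form-f}, orthogonality of Wiener chaoses yields
\begin{equation*}
\E[u_{t,x}u_{s,y}] = 1 + \sum_{n\geq 1} n!\,\langle f_{t,x,n},f_{s,y,n}\rangle_{\mathscr{H}^{\otimes n}},
\end{equation*}
while the Taylor expansion of the exponential gives
\begin{equation*}
\E[e^{\beta_{t,s}(x-y)}] = 1 + \sum_{n\geq 1} \frac{1}{n!}\,\E[\beta_{t,s}(x-y)^n].
\end{equation*}
So the heart of the argument is the identity $n!\,\langle f_{t,x,n}, f_{s,y,n}\rangle_{\mathscr{H}^{\otimes n}} = \frac{1}{n!}\,\E[\beta_{t,s}(x-y)^n]$ for every $n\geq 1$, combined with a suitable domination argument that allows summing and exchanging expectations with series.

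To establish this identity I would first unfold the inner product using \eqref{COVst}:
\begin{equation*}
\langle f_{t,x,n},f_{s,y,n}\rangle_{\mathscr{H}^{\otimes n}}
=\int_{[0,t]^n\times[0,s]^n}\!\!\prod_{i=1}^n\gamma_0(r_i-u_i)\int_{\R^{2nd}}\!\prod_{i=1}^n\gamma_1(y_i-z_i)\,f_{t,x,n}(\pmb{r},\pmb{y})f_{s,y,n}(\pmb{u},\pmb{z})\,d\pmb{y}d\pmb{z}\,d\pmb{r}d\pmb{u}.
\end{equation*}
Then I would expand the right-hand side of the target identity: by Fubini,
\begin{equation*}
\E[\beta_{t,s}(x-y)^n]=\int_{[0,t]^n\times[0,s]^n}\!\prod_{i=1}^n\gamma_0(u_i-v_i)\,\E\Big[\prod_{i=1}^n\gamma_1(X^1_{u_i}-X^2_{v_i}+x-y)\Big]\,d\pmb u d\pmb v,
\end{equation*}
and the inner expectation, by a spatial Fourier transform against the measure $\gamma_1^{\otimes n}$, reduces to an integral against the joint density of $(X^1_{u_1},\dots,X^1_{u_n})$ and that of $(X^2_{v_1},\dots,X^2_{v_n})$. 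These joint densities, by the Markov property of Brownian motion, decompose exactly as products of heat kernels $G(\cdot,\cdot)$ along the ordering $u_{\sigma(1)}>\cdots>u_{\sigma(n)}$, which is precisely the structure appearing in \eqref{form-f}.

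The main bookkeeping hurdle is the combinatorial matching: the factor $\frac{1}{n!}$ in \eqref{form-f} makes $f_{t,x,n}$ a genuinely symmetric kernel, and when two symmetric kernels are paired in $\mathscr{H}^{\otimes n}$ all $(n!)^2$ joint time-orderings of $(\pmb r,\pmb u)$ contribute equally. I would isolate a single ordering, say $r_1>\cdots>r_n$ and $u_1>\cdots>u_n$, use symmetry in $(r_i,y_i)\leftrightarrow(r_{\pi(i)},y_{\pi(i)})$ to re-sum the $(n!)^2$ orderings, and cancel the two $\frac{1}{n!}$ factors against $(n!)^2$; the remaining $n!$ on the left absorbs into the $\frac{1}{n!}$ on the right, yielding equality. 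Throughout, integrability follows from Dalang's condition \eqref{Da}, local integrability of $\gamma_0$, and the moment bounds for $u_{t,x}$ (as in \cite[Theorem 3.2]{HHNT15}), which also permit term-by-term summation to recover $\E[e^{\beta_{t,s}(x-y)}]$. The main obstacle, then, is less conceptual than notational: tracking the symmetrization factors and the identification of the heat-kernel products with Brownian joint densities on each time-ordered piece.
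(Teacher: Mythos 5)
Your proposal is correct and coincides with the argument the paper relies on: the paper itself does not prove this lemma but defers to \cite[Theorem 3.6]{HHNT15}, whose proof is precisely the term-by-term matching $n!\,\langle f_{t,x,n},f_{s,y,n}\rangle_{\mathscr{H}^{\otimes n}}=\frac{1}{n!}\E[\beta_{t,s}(x-y)^n]$ that you describe, and this identity is in fact rederived explicitly in Section \ref{sec311} of the paper (compare \eqref{abexp} with \eqref{acexp}, where the Fourier transform in space together with the representation \eqref{FK} of the ordered heat-kernel products as Brownian characteristic functions does the bookkeeping you sketch). Since all terms are nonnegative, the interchange of expectation and series is justified by monotone convergence once $\E[u_{t,x}^2]<\infty$ is known, so your domination step is also sound.
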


We refer to  \cite[Theorem 3.6]{HHNT15} for  the proof of a more general statement. We point out that in  this reference, the moment formula is stated for $x=y$ and $t=s$, see equation (3.21) therein; one can prove the case $x\neq y$ or $t\neq s$ verbatim.

 It follows from Lemma \ref{FKAC} that  
 \begin{align*}
\Sigma_{s,t} :&= \lim_{R+\infty} R^{-d} \E\big[ A_t(R) A_s(R) \big]  =\lim_{R\to+\infty} R^{-d}  \int_{B_R^2}  \big(\phi_{t,s}(x-y) - 1 \big)dxdy \\
&=\lim_{R\to+\infty} R^{-d}  \int_{\R^d} \big(\phi_{t,s}(z) - 1 \big) \text{vol}\big( B_R \cap B_R(-z) \big)dz =\omega_d \int_{\R^d} \big(\phi_{t,s}(z) - 1 \big) dz,
 \end{align*}
 provided the integral $ \int_{\R^d} \big(\phi_{t,s}(z) - 1 \big) dz $ is finite. Note that in our setting, $\phi(z) \geq 1$ for every $z\in\R^d$; note also that, since $\gamma_1$ is integrable,
 \begin{align}  \notag 
\int_{\R^d} \big(\phi_{t,s}(z) - 1 \big) dz  &  \geq \int_{\R^d}  \E[ \beta_{t,s}(z) ]  dz\\
&   = \left(\int_0^t\int_0^s \gamma_0(u-v) dudv \right) \int_{\R^d}\gamma_1(z)dz  \in (0,\infty), \label{DN3}
 \end{align}
 where the equality follows from Fubini's theorem.

   Note that 
 \[
  \int_{\R^d} \big(\phi_{t,s}(z) - 1 \big) dz  = \sum_{p\geq 1} \frac{1}{p!} \int_{\R^d} \E\big[  \beta_{t,s}(z)^p \big]dz,
  \]
   where the object $ \beta_{t,s}(z)$ can be understood as the ``weighted'' intersection local time of two independent Brownian motions $X^1$ and  $X^2$.

   In order to show that  $ \int_{\R^d} \big(\phi_{t,s}(z) - 1 \big) dz < \infty$, we first estimate
   the $p$th moment of $  \beta_{t,s}(z)$.  Without losing any generality, we assume  $s\leq t$.
   Using that $\gamma_1$ is the Fourier transform of  the spectral density $ \varphi_1$,  which is continuous and bounded due to the finiteness of  $\gamma_1(\R^d)$, we can  write 
\begin{align} 
\E\big[ \beta_{s,t}(z)^p \big] & = \int_{[0,s]^{p}\times [0,t]^{p} } \left( \prod_{j=1}^p\gamma_0(s_j - r_j ) \right) \E\left[ \prod_{j=1}^p\gamma_1(X_{s_j}^1 - X_{r_j}^2 + z) \right] d\pmb{s_p} d\pmb{r_p} \notag  \\
  &= \int_{[0,s]^{p}\times [0,t]^{p} }  \int_{\R^{pd}}  d\pmb{\xi_p} d\pmb{s_p} d\pmb{r_p}   \left( \prod_{j=1}^p \gamma_0(s_j - r_j ) \right) \left(  \prod_{j=1}^p  \varphi_1(\xi_j)  \right)   \notag  \\
  &\qquad\qquad \times \E \left(   \prod_{j=1}^p e^{-\i \xi_j  \cdot (X_{s_j}^1 - X_{r_j}^2 + z) } \right)  \notag    \\ 
  &= \int_{[0,s]^{p}\times [0,t]^{p} }  \int_{\R^{pd}}  d\pmb{\xi_p} d\pmb{s_p} d\pmb{r_p}   \left( \prod_{j=1}^p \gamma_0(s_j - r_j ) \right) \left(  \prod_{j=1}^p  \varphi_1(\xi_j)  \right) e^{-\i z \cdot \tau(\pmb{\xi_p})}  \notag \\
  &\qquad\qquad \times \exp\left(- \frac{1}{2}   \sum_{1\leq i,j \leq p} (s_i\wedge s_j + r_i \wedge r_j) \xi_i \cdot \xi_j       \right),   \label{abexp}
 \end{align}
which is a nonnegative, uniformly continuous and uniformly bounded function in $z$. 
Indeed, it is clear that $0 \le \E[\beta_{s,t}(z)^p] \le \E[(\beta_{s,t}(0)^p] <+\infty$ and the uniform continuity follows from the dominated convergence theorem. Then by  the monotone convergence theorem, we write 
\begin{align*}
\int_{\R^d}\E\big[ \beta_{s,t}(z)^p \big] dz = \lim_{\e\downarrow 0} \int_{\R^d}\E\big[ \beta_{s,t}(z)^p \big] \exp\left( -\frac{\e}{2} \| z\|^2 \right) dz\in [0, \infty].  
\end{align*}
Recall from   \eqref{abexp} that the finiteness of   $ \E\big[ \beta_{s,t}(0)^p \big]$ allows us to apply    Fubini's theorem to get for any $\e > 0$, 
\begin{align}
&T_{p,\e}:=\int_{\R^d}\E\big[ \beta_{s,t}(z)^p \big] \exp\left( -\frac{\e}{2} \| z\|^2 \right) dz  \notag \\
&=  (2\pi)^d  \int_{[0,s]^{p}\times [0,t]^{p} }  \int_{\R^{pd}}  d\pmb{\xi_p} d\pmb{s_p} d\pmb{r_p}   \left( \prod_{j=1}^p \gamma_0(s_j - r_j ) \right) \left(  \prod_{j=1}^p  \varphi_1(\xi_j)  \right) \notag  \\ 
  &\qquad\qquad\quad  \times   G\big(\e,  \tau(\pmb{\xi_p})  \big) \exp\left(- \frac{1}{2}   \sum_{1\leq i,j \leq p} (s_i\wedge s_j + r_i \wedge r_j) \xi_i \cdot \xi_j       \right)  \notag ,
\end{align}
which is  finite. 

Consider first the case $p\ge 2$.  Using that $s\le t$
and  
$$\exp\left(- \frac{1}{2}   \sum_{1\leq i,j \leq p}   (r_i \wedge r_j )\xi_i \cdot \xi_j       \right)  \leq 1,$$
we can bound  $T_{p,\e}$ as follows
\begin{align*}
T_{p,\e} &\leq (2\pi)^d \Gamma_t^p  \int_{\R^{pd}} d\pmb{\xi_p} \int_{[0,t]^p } d\pmb{s_p}  \left( \prod_{j=1}^p    \varphi_1(\xi_j)  \right) \\
&\qquad  \times G\big(\e,  \tau(\pmb{\xi_p})  \big) \exp\left(- \frac{1}{2}   \sum_{1\leq i,j \leq p}   (s_i \wedge s_j) \xi_i \cdot \xi_j       \right) ,
\end{align*}
where the constant $\Gamma_t := \int_{-t}^t \gamma_0(u)du$ is finite for each $t > 0$ in view of the local integrability of $\gamma_0$.   Making  the change of variables   $\pmb{\xi_p} = (\eta_1 - \eta_2, \dots, \eta_{p-1} - \eta_p, \eta_p)$, yields, with the convention  $s_{p+1} = 0$ and  $\eta_0=0$,
\begin{align*}
& T_{p,\e} \leq  (2\pi)^d \Gamma_t^p p! \int_{\R^{pd}} d\pmb{\xi_p} \int_{\Delta_p(t)} d\pmb{s_p} e^{- \frac 12\sum_{j=1}^p (s_j - s_{j+1})\| \xi_1 + \dots + \xi_j\|^2 }G (\e,  \tau(\pmb{\xi_p})  )   \prod_{j=1}^p    \varphi_1(\xi_j)      \\
&=  (2\pi)^d \Gamma_t^p p! \int_{\R^d} d\eta_p G(\e, \eta_p)  \int_{\R^{pd-d}} d\pmb{\eta_{p-1}} \int_{\text{SIM}_p(t)} d\pmb{w_p}   \prod_{j=1}^p    \varphi_1(\eta_{j} - \eta_{j-1})   e^{-  \frac 12w_j\| \eta_j\|^2 }  .
\end{align*}
Put 
\[
Q_p(\eta_p) =  \int_{\R^{pd-d}} d\pmb{\eta_{p-1}} \int_{\text{SIM}_p(t)} d\pmb{w_p}  \left( \prod_{j=1}^p    \varphi_1(\eta_{j} - \eta_{j-1})   e^{- \frac 12 w_j\| \eta_j\|^2 }\right),
\]
then we just obtained
\[
T_{p,\e} \leq  (2\pi)^d \Gamma_t^p p! \int_{\R^d} d\eta_p G(\e, \eta_p) Q_p(\eta_p).
\]
In the following, we will  prove that $ Q_p(\eta_p)$ is uniformly bounded and provide an estimate.  We rewrite $Q_p(\eta_p)$ as follows. With $h_j(\eta) = \exp\big(-\frac 12w_j   \|  \eta \|^2 \big)$,
\begin{align*}
&Q_p(\eta_p) =   \int_{\text{SIM}_p(t)}   d\pmb{w_p}  ~ h_p(\eta_p)   \int_{\R^{pd-d}} d\pmb{\eta_{k-1} }~ \varphi_1( \eta_1) h_1(\eta_1) \varphi_1(\eta_2 - \eta_1) h_2(\eta_2)  \\
& \qquad   \times  \varphi_1(  \eta_3 - \eta_2) h_3(\eta_3) \times\dots \times  \varphi_1(\eta_{p-1}-\eta_{p-2}) h_{p-1}(\eta_{p-1})\varphi_1(\eta_p - \eta_{p-1})  \,.
\end{align*}
Using that $\varphi_1$ is bounded, we get
\begin{equation}
\int_{\R^d}  \varphi_1(\eta_1)  \varphi_1(\eta_2 - \eta_1) h_1(\eta_1)  d\eta_1   \leq \| \varphi_1 \|_\infty   \int_{\R^d} \varphi_1(\eta_1) h_1(\eta_1)d\eta_1.  \label{KU2}
\end{equation}
 On the other hand, using \eqref{ecuu1}, we have 
\[
\int_{\R^d} d\eta_j  h_j(\eta_j)  \varphi_1(\eta_{j+1} -\eta_j)  \leq \int_{\R^d} d\eta_j  \varphi_1(\eta_j)   h_j(\eta_j)
\]
for $j=2,\dots, p-1$. So,
         \begin{align}
Q_p(\eta_p) &\leq \| \varphi_1\|_\infty  \int_{  \text{SIM}_p(t) } d\pmb{w_p}  e^{-\frac 12w_p \| \eta_p\|^2}    \prod_{j=1}^{p-1}    \int_{\R^d} e^{-w_j \| \eta_j\|^2} \varphi_1(\eta_j) d\eta_j  \notag\\
  &\leq t \| \varphi_1\|_\infty  \int_{\R^{pd-d}}  \int_{ \text{SIM}_{p-1}(t)  }     \prod_{i=1}^{p-1} \varphi_1(\xi_i)
   e^{-\frac 12 w_i \| \xi_i\|^2}         d\pmb{w_{p-1}}     d\pmb{\xi_{p-1}} \notag \\
&\leq  t \| \varphi_1\|_\infty  \sum_{j=0}^{p-1} {p-1 \choose j} \frac{t^j}{j!} D_N^j (2C_N)^{p-1-j},    \label{fromh}
\end{align}
 where the last inequality follows from Lemma 3.3 in \cite{HHNT15},
 with the notation
 \begin{align} \label{CNN}
C_N = \int_{\{ \| \xi \| \geq N  \}} \frac{\varphi_1(\xi)     }{\| \xi\|^2} d\xi  
\end{align}
and 
\begin{align*}
D_N =  \int_{\{ \| \xi \| \leq N  \}}   \varphi_1(\xi)    d\xi .
\end{align*}
Notice that these quantities  are finite for any $N >0$ by condition \eqref{Da}. We fix  $N$ such that $0< 4\Gamma_t C_N < 1$.  
  This gives us the uniform boundedness of $Q_p$ and moreover,
 \[
T_{p,\e} \leq   (2\pi)^d \Gamma_t^p p! \| Q_p \|_\infty \leq \| \varphi_1\|_\infty   (2\pi)^d \Gamma_t^pp! t (4C_N)^{p-1}\exp\Big( \frac{tD_N}{2C_N} \Big),
 \]
which  immediately implies   
\begin{align}
\int_{\R^d}\E\big[ \beta_{s,t}(z)^p\big] dz  \leq \| \varphi_1\|_\infty   (2\pi)^d \Gamma_t^p p! t (4C_N)^{p-1}\exp\Big( \frac{tD_N}{2C_N} \Big)< \infty \label{stp1}
\end{align}
and  
\begin{align}
\sum_{p\geq 2}\frac{1}{p!} \int_{\R^d}\E\big[ \beta_{s,t}(z)^p \big] dz & \leq \frac{ (2\pi)^d  \| \varphi_1\|_\infty  t}{4C_N}  \exp\Big( \frac{tD_N}{2C_N} \Big)  \sum_{p\geq 2} (4\Gamma_t C_N)^p  \notag\\
&= \frac{  4\| \varphi_1\|_\infty (2\pi)^d  t C_N \Gamma_t^2    }{1- 4\Gamma_t C_N }  \exp\Big( \frac{tD_N}{2C_N} \Big)  \label{stp2}
\end{align}
is finite, since $0< 4\Gamma_t C_N < 1 $.

To show  the integrability of $\phi_{s,t}-1$,  it remains to check that
\begin{align} \label{bddT1}
 \int_{\R^d}\E\big[ \beta_{s,t}(z) \big] dz < \infty,
\end{align}
which follows from \eqref{DN3}.
Therefore,
\[
\int_{\R^d} \big( \phi_{s,t}(z) - 1 \big) dz \leq  t \Gamma_t \| \gamma_1\|_{L^1(\R^d)}  + \frac{  4\| \varphi_1\|_\infty (2\pi)^d  t C_N \Gamma_t^2    }{1- 4\Gamma_t C_N }  \exp\Big( \frac{tD_N}{2C_N} \Big)  < \infty.
\]
 As a consequence, we proved that, for any $s,t\in\R_+$,
\[
\lim_{R\to+ \infty} \frac{ \E[ A_t(R) A_s(R) ]   }{R^d} = \Sigma_{s,t}=  \omega_d \int_{\R^d} \big( \phi_{s,t}(z) -1 \big)dz \in  (0, \infty).
\]

 \subsection{Convergence of the finite-dimensional distributions in Theorem \ref{thmSHE}}\label{sec311}

 Fix $0 < t_1 < \dots < t_n < \infty$ and put
\[
g_{q,R}(t) = R^{-d/2} \int_{B_R} f_{t, x, q}dx\,.
\]
Then $A_R:=R^{-d/2}\big( A_{t_1}(R), \ldots, A_{t_n}(R) \big)$ falls into the framework of the following Proposition \ref{tool},  the multivariate chaotic central limit theorem borrowed  from \cite[Theorem 2.1]{CNN18}.

\begin{proposition} \label{tool}    
 Fix an integer $n\geq 1$ and consider a family $\big\{ A_R, R> 0 \big\}$ of  random vectors in $\R^n$ such that each component of $A_R=(A_{R, 1}, \ldots, A_{R,n})$ belongs to $ L^2(\Omega, \sigma\{W\}, \mathbb{P})$ and has the following chaos expansion
\[
\qquad\qquad\qquad A_{R,j} = \sum_{q\geq 1} I_q^W (  g_{q, j,R}  )\quad \text{with $g_{q, j,R}$    symmetric kernels. }
\]
Suppose the following conditions {\rm (a)-(d)} hold:
\begin{itemize}
\item[(a)]  $\forall i,j\in\{1, \ldots, n\}$ and  $\forall q\geq 1$, $ \E\big[ I_q^W (  g_{q, j,R}  ) I_q^W (  g_{q, i,R}  )\big]\xrightarrow{R\to+\infty}  \sigma_{i,j,q}$.
\medskip
\item[(b)]  $\forall i\in\{1, \ldots, n\}$, ${\displaystyle \sum_{q\geq 1} \sigma_{i,i,q} < \infty}$.
\medskip
\item[(c)]   For any $1\leq r \leq q-1$,     
$
\big\| g_{q, i,R}\otimes_{r} g_{q, i,R} \big\|_{ \mathscr{H}^{\otimes (2q-2r)}} \xrightarrow{R\to+ \infty} 0$.

 \medskip
\item[(d)]  $\forall i\in\{1, \ldots, n\}$, ${\displaystyle \lim_{N\to+\infty} \sup_{R>0} \sum_{q\geq N+1}   \E\big[   I_q^W (  g_{q, i,R}  )^2 \big]  = 0}$.

\end{itemize}
Then $A_R$ converges in law to $N(0, \Sigma)$ as $R\to+\infty$, where $\Sigma = \big( \sigma_{i,j} \big)_{i,j=1}^n$ is given by  $\sigma_{i,j} = \sum_{q\geq 1} \sigma_{i,j,q}$.
\end{proposition}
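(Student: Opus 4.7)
The plan is to combine a truncation-across-chaoses argument with the multivariate Fourth Moment Theorem of Peccati--Tudor. Conditions (a) and (c) handle the joint convergence on each \emph{fixed} chaos, condition (b) guarantees that the limiting covariance series converges absolutely, and condition (d) controls the high-order chaoses uniformly in $R$, which is what enables me to pass to the limit.

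\medskip

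For each integer $N\geq 1$, I set
\[
A_R^{(N)} := \sum_{q=1}^N V_{q,R}, \qquad V_{q,R} := \bigl(I_q^W(g_{q,1,R}), \ldots, I_q^W(g_{q,n,R})\bigr) \in \R^n.
\]
Since multiple integrals of different orders are $L^2$-orthogonal, the covariance matrix of the \emph{stacked} vector $(V_{1,R},\ldots,V_{N,R})\in\R^{nN}$ is block-diagonal, the $q$-th block being $\Sigma_q^R := \bigl(\E[I_q^W(g_{q,i,R})\,I_q^W(g_{q,j,R})]\bigr)_{i,j}$, which converges by (a) to $\Sigma_q:=(\sigma_{i,j,q})_{i,j}$.

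\medskip

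Next, for each $q\geq 2$, condition (c) together with the univariate Nualart--Peccati Fourth Moment Theorem yields the marginal CLT $I_q^W(g_{q,j,R})\Rw N(0,\sigma_{j,j,q})$ for every $j$. The Peccati--Tudor theorem then upgrades this to the joint convergence $V_{q,R}\Rw N(0,\Sigma_q)$, relying on the striking fact that inside a fixed Wiener chaos, marginal Gaussian limits together with the convergence of the covariance matrix force joint Gaussian convergence. The case $q=1$ is immediate since the first chaos is already Gaussian. Because the limiting covariance of the stacked vector is block-diagonal, the $V_{q}$-blocks are asymptotically independent, whence
\[
A_R^{(N)} \xrightarrow[R\to+\infty]{\rm law} N\bigl(0,\Sigma^{(N)}\bigr), \qquad \Sigma^{(N)}:=\sum_{q=1}^N \Sigma_q.
\]

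\medskip

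To conclude, I would let $N\to\infty$. By (b) and Cauchy--Schwarz, $\Sigma^{(N)}\to \Sigma:=\sum_{q\geq 1}\Sigma_q$ entrywise, so $N(0,\Sigma^{(N)})$ converges weakly to $N(0,\Sigma)$. On the other hand, chaos orthogonality together with (d) gives
\[
\lim_{N\to+\infty}\sup_{R>0}\E\bigl[\|A_R - A_R^{(N)}\|^2\bigr] = 0,
\]
so the tail of the chaos expansion is uniformly $L^2$-negligible in $R$. A standard three-epsilon argument via characteristic functions then yields $A_R\Rw N(0,\Sigma)$. The hard part of the argument is the within-fixed-chaos multivariate CLT: it requires the full strength of the Peccati--Tudor theorem, since verifying the cross-contractions $g_{q,i,R}\otimes_r g_{q,j,R}$ for $i\neq j$ is not assumed, and one has to rely on the automatic upgrade from marginal to joint Gaussian convergence available inside a fixed Wiener chaos.
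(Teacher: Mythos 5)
Your proof is correct and is essentially the standard argument: the paper does not prove this proposition itself but imports it from \cite{CNN18}, and the route you take (truncation across chaoses, the Peccati--Tudor multivariate Fourth Moment Theorem on each fixed chaos with covariance convergence supplied by (a) and marginal CLTs supplied by (c), asymptotic independence of blocks from chaos orthogonality, and a uniform-in-$R$ tail estimate from (d) feeding a three-epsilon characteristic-function argument) is exactly the one behind that cited result. Your observation that cross-contractions $g_{q,i,R}\otimes_r g_{q,j,R}$ need not be controlled because Peccati--Tudor upgrades marginal to joint Gaussian convergence is the right key point.
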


 \medskip
  \noindent{\bf Proof of conditions {\rm (a), (b)} {\bf and} {\rm  (d):  }}  It suffices to prove  that for any $t,s\in\R_+$ and for any $p\geq 1$, $p!\langle g_{p,R}(t), g_{p,R}(s) \rangle_{\mathscr{H}^{\otimes p}}$ is convergent to some limit, denoted by $\sigma_p(t,s)$ and  for each $t\ge 0$,
 \begin{align}\label{Belval}
 \sum_{p\geq 1} \sigma_p(t,t) < +\infty
 \end{align}
 and
 \begin{equation} \label{DN1}
 \lim_{N\to+\infty} \sup_{R>0} \sum_{q\geq N+1}    p! \| g_{p,R}(t) \|_{\mathscr{H}^{\otimes p}} ^2  = 0.
 \end{equation}
  
  It is well-known in the  literature that the $p$th moment of $ \beta_{t,t}(0)$ coincides with the variance of the $p$th chaotic component of the solution $u_{t,x}$; see for instance \cite{HN09}. 
  Then, it is natural to expect that our verification of condition (a) in Proposition \ref{tool} will resemble the  computations we have done for $\E\big[  \beta_{t,s}(z)^p \big]$. Moreover, we will see that condition (\ref{Belval}) is a consequence of the finiteness of the integral
   $ \int_{\R^d} \big(\phi_{t,s}(z) - 1 \big) dz $ proved in Section \ref{sec310}.
    The verification of condition (\ref{DN1}) will be straightforward, as a by-product of  the computations in 
    Section \ref{sec310}.  
  
  Let us start with the case $p=1$.
By an easy computation,
\begin{align} \notag
  &\langle g_{1,R}(t), g_{1,R}(s) \rangle_{\mathscr{H}}  =
  R^{-d} \int_{B^2_R} \langle G(t-\bullet, x-\bullet), G(s- \bullet,  y-\bullet) \rangle_{\mathscr{H}} dxdy\\
  &  \qquad =
 (2\pi)^d  \omega_d   \int_0^{t} \int_0^{s} dudv \gamma_0(u-v) \int_{\R^d} d\xi ~\ell_R(\xi)  \varphi_1(\xi) e^{-\frac{1}{2}(t -u + s- v )   \| \xi\|^2   },  \label{DN2}
\end{align}
where $\ell_R(\xi)$ is the approximation of the identity introduced in Point (3) of Lemma \ref{Bessel}.
Since $\gamma_1$ is integrable on $\R^d$, $\varphi_1$ is uniformly continuous and uniformly bounded. Then, taking the limit as $R\to+\infty$ in \eqref{DN2}, yields 
\[
 \langle g_{1,R}(t), g_{1,R}(s) \rangle_{\mathscr{H}} \xrightarrow{R\to+\infty}  (2\pi)^d\omega_d \varphi_1(0)  \int_0^{t} \int_0^{s} dudv \gamma_0(u-v) =\sigma_1(t,s).
\]
Notice that   $\sigma_1(t,s)=\omega_d \int_{\R^d} \E\big[\beta_{s,t}(z) \big]dz$, in view of \eqref{DN3} and  $(2\pi)^d \varphi_1(0) =\gamma_1(\R^d)$.
\medskip

  Now let us consider higher-order chaos.   For a fixed   $p\geq 2$, we   write
 \begin{align*}
   \E\Big[ I^W_p\big(g_{p,R}(t)  \big) I^W_p\big(g_{p, R}(s) \big)  \Big] = \frac{p!}{R^d} \int_{B_R^2} dx dy ~ \big\langle f_{t,x,p} ,  f_{s,y,p}  \big\rangle_{\mathscr{H}^{\otimes p}}.   
 \end{align*}
The kernel $f_{t,x,p}$ is a nonnegative  function on $\R_+^p\times\R^{pd}$, so         $ \langle f_{t,x,p} ,  f_{s,y,p}   \rangle_{\mathscr{H}^{\otimes p}}\geq 0$.  We first  write, by using  the Fourier transform in space,
 \begin{align}
&\quad \big\langle f_{t,x,p} ,  f_{s,y,p}  \big\rangle_{\mathscr{H}^{\otimes p}}  \notag  \\
&=  \int_{\R_+^{2p}} d\pmb{s_p} d\pmb{\wt{s}_p}  \prod_{j=1}^p    \gamma_0( s_j-\wt{s}_j) \int_{\R^{pd}} \mu_1(\pmb{\xi_p})   \F f_{t, x, p}(\pmb{s_p}, \pmb{\xi_p})   \F f_{s, y, p}(\pmb{\wt{s}_p}, -\pmb{\xi_p}). \label{DTW}
 \end{align}
       Note that for $\pmb{s^\sigma_p}\in \Delta_p(t)$, by the change of  variables $y_1 = x^\sigma_1 - x$, $y_j = x^\sigma_j - x^\sigma_{j-1}$ for $j\geq 2$, we can write, with $X^1$ standard Brownian motion on $\R^d$ as before,
      \begin{align}
       &\quad{\bf 1}_{\Delta_p(t)}( \pmb{s^\sigma_p}   ) \int_{\R^{dp}} d\pmb{x^\sigma_p} e^{-\i \pmb{x^\sigma_p} \cdot \pmb{\xi^\sigma_p}}  G(t - s^\sigma_1, x - x^\sigma_1)  \prod_{i=1}^{p-1} G(s^\sigma_i - s^\sigma_{i+1}, x^\sigma_i - x^\sigma_{i+1})  \notag \\
       &={\bf 1}_{\Delta_p(t)}( \pmb{s^\sigma_p}   ) e^{-\i x\cdot \tau(\pmb{\xi_p})}  \E\left[  \prod_{j=1}^p \exp\left(  -\i  (X^1_{t}  - X^1_{s^\sigma_j} )\cdot \xi^\sigma_j     \right)  \right]  \notag \\
       &={\bf 1}_{\Delta_p(t)}( \pmb{s^\sigma_p}   )  e^{-\i x\cdot \tau(\pmb{\xi_p})}  \E\left[  \prod_{j=1}^p \exp\left(  -\i  (X^1_{t}  - X^1_{s_j} )\cdot \xi_j     \right)  \right],   \label{FK}
      \end{align}
so that
 \begin{align*}
   \F f_{t, x, p}(\pmb{s_p},  \pmb{\xi_p}) =\frac{1}{p!}  e^{-\i x\cdot \tau(\pmb{\xi_p})}  \E\left[  \prod_{j=1}^p \exp\left(  -\i  (X^1_{t}  - X^1_{s_j} )\cdot \xi_j     \right)  \right],
  \end{align*}
for  $\pmb{s_p}\in[0,t]^p$ and
  \begin{align*}
   \F f_{s, y, p}(\pmb{\wt{s}_p},  -\pmb{\xi_p})  = \frac{1}{p!}  e^{\i y\cdot \tau(\pmb{\xi_p})}  \E\left[  \prod_{j=1}^p \exp\left(  \i  (X^1_{s}  - X^1_{\wt{s}_j} )\cdot \xi_j     \right)  \right]
  \end{align*}
for  $\pmb{\wt{s}_p}\in[0,s]^p$. Keeping in mind the  above expressions and making the time changes in \eqref{DTW} (from $s_j$ to $t - s_j$ and from  $\wt{s}_j$ to $s - \wt{s}_j$, for $j=1,\dots,p$) yields 
 \begin{align}
&\quad \big\langle f_{s,x,p} ,  f_{t,y,p}  \big\rangle_{\mathscr{H}^{\otimes p}}  \notag  \\
&=\frac{1}{(p!)^2}  \int_{[0,s]^p\times  [0,t]^p} d\pmb{s_p} d\pmb{r_p}  \prod_{j=1}^p    \gamma_0(t- s_j- s + r_j) \int_{\R^{pd}} \mu_1(\pmb{\xi_p}) e^{-\i (x-y)\cdot \tau(\pmb{\xi_p})} \notag  \\&\qquad \times \E\left[  \prod_{j=1}^p \exp\left(  -\i   X^1_{s_j} \cdot \xi_j     \right)  \right] \cdot  \E\left[  \prod_{j=1}^p \exp\left(  -\i  X^1_{r_j} \cdot \xi_j     \right)  \right], \label{acexp}
 \end{align}
since $\{X^1_t - X^1_{t-u}, u\in[0,t]\}$ and $\{X^1_s - X^1_{s-u}, u\in[0,s]\}$ have the same law as $\{X^1_u , u\in[0,t]\}$ and $\{X^1_u, u\in[0,s]\}$ respectively.  So the expression \eqref{DTW} is indeed a function that depends only on the difference $x-y$.  Furthermore, a quick comparison between  \eqref{abexp} and  \eqref{acexp} reveals that the only difference is that the variables inside the temporal covariance kernel are $\gamma_0(s_j - r_j)$ in  \eqref{abexp}  and   $\gamma_0(t-s_j - s + r_j)$ in \eqref{acexp}.  Going through the same arguments that lead to \eqref{stp1} and \eqref{stp2}, we get (with $s\leq t$)
\begin{align*}
p!  \int_{\R^d}  \langle f_{t,z,p} ,  f_{s,0,p}   \rangle_{\mathscr{H}^{\otimes p}} dz \leq (2\pi)^d \| \varphi_1\|_\infty  \Gamma_t^p t (4C_N)^{p-1} \exp\Big( \frac{tD_N}{2C_N} \Big)  
\end{align*}
and 
  \begin{align*}
   \E\Big[ I^W_p\big(g_{p,R}(t)  \big) I^W_p\big(g_{p, R}(s) \big)  \Big] & = \frac{p!}{R^d} \int_{B_R^2} dx dy ~ \big\langle f_{t,x,p} ,  f_{s,y,p}  \big\rangle_{\mathscr{H}^{\otimes p}} \\
   & = p! \omega_d \int_{\R^d} dz  \big\langle f_{t,0,p} ,  f_{s,z,p}  \big\rangle_{\mathscr{H}^{\otimes p}} \frac{\text{vol}\big( B_R \cap B_R(-z) \big)}{\omega_d R^d} \\
   &\xrightarrow{R\to+\infty}  p! \omega_d \int_{\R^d} dz  \big\langle f_{t,0,p} ,  f_{s,z,p}  \big\rangle_{\mathscr{H}^{\otimes p}}  = \sigma_{p}(t,s),
\end{align*}
with 
 \begin{align}
 \sup_{R>0} \E\big[ I^W_p (g_{p,R}(t)   ) I^W_p (g_{p, R}(s))  \big] \leq  \sigma_{p}(t,s). \label{UNIB}
 \end{align}
This completes the verification of condition (a).  
Notice that 
\[
\sigma_p(t,t)= \frac{\omega_d}{p!}\int_{\R^d} \E[ \beta_{t,t}(z)^p] dz,
\]
 so condition (b) follows from
\eqref{bddT1} and \eqref{stp2}.
 To see condition (d), it is enough to use \eqref{UNIB} and condition (b).

\medskip
 \noindent{\bf Proof of  condition {\rm(c):}}  Given  $t>0$ and  $1\leq r \leq p-1$, we need to prove  that
 \[
\lim_{R\to+\infty}  \big\| g_{p,R}(t) \otimes_r g_{p,R}(t) \big\| _{\mathscr{H}^{\otimes (2p-2r)}} = 0.
 \]
We follow the same routine that leads to \eqref{kk2}.    We put 
 \[
 \mathfrak{f}(\pmb{s_p}, \pmb{y_p}) = f_{t,0,p}(\pmb{s_p}, \pmb{y_p}),
 \]
 and in this way, we have $f_{t,x,p} = \mathfrak{f}^x$, with $\mathfrak{f}^x$ being the spatially shifted version of $\mathfrak{f}$. Now we write (notice that we have the extra temporal variables now)
 \begin{align*}
&\quad (2\pi)^{-2d} \big\| g_{p,R}(t) \otimes_r g_{p,R}(t) \big\| _{\mathscr{H}^{\otimes (2p-2r)}}^2 \\
& = \int_{[0,t]^{4p}} d\pmb{s_r}  d\pmb{\wt{s}_r}d\pmb{v_r} d\pmb{\wt{v}_r} d\pmb{t_{p-r}} d\pmb{\wt{t}_{p-r}}d\pmb{w_{p-r}} d\pmb{\wt{w}_{p-r}}   \left( \prod_{i=1}^r \gamma_0(s_i-\wt{s}_i)\gamma_0(v_i-\wt{v}_i)\right) \\
&\qquad \times \left( \prod_{j=1}^{p-r} \gamma_0(t_j-\wt{t}_j)\gamma_0(w_j-\wt{w}_j)\right)  \wt{\mathcal{J}}_R,
 \end{align*}
 with $\wt{\mathcal{J}}_R = \wt{\mathcal{J}}_R\big( \pmb{s_r},  \pmb{\wt{s}_r}, \pmb{v_r} , \pmb{\wt{v}_r}, \pmb{t_{p-r}}, \pmb{\wt{t}_{p-r}},\pmb{w_{p-r}} ,\pmb{\wt{w}_{p-r}}  \big)$ given by
 \begin{align*}
 \wt{\mathcal{J}}_R &=  \int_{\R^{2pd}}\mu_1(d\pmb{\xi_r}) \, \mu_1(d\pmb{\wt{\xi}_r})  \mu_1(d\pmb{\eta_{p-r}})  \mu_1(d\pmb{\wt{\eta}_{p-r}})  \\
&\quad \times (\F \mathfrak{f})(\pmb{s_r}, \pmb{t_{p-r}},\pmb{\eta_{p-r}},\pmb{\xi_r})   (\F \mathfrak{f})(\pmb{\wt{s}_r}, \pmb{w_{p-r}}, \pmb{\eta_{p-r}},  \pmb{\wt{\xi}_r})  \| a + b \| ^{-d/2}  \| \wt{b} + a \| ^{-d/2}    \\
&\quad     \times (\F \mathfrak{f}) (\pmb{v_r}, \pmb{\wt{t}_{p-r}}, \pmb{\wt{\eta}_{p-r}},\pmb{\xi_r})   (    \F \mathfrak{f})(\pmb{\wt{v}_r},\pmb{\wt{w}_{p-r}},     \pmb{\wt{\eta}_{p-r}} , \pmb{\wt{\xi}_r})    \| \wt{a} + b   \| ^{-d/2} \| \wt{a}+\wt{b} \| ^{-d/2}    \\
&\quad  \times J_{d/2} \big( R\| a + b \| \big) J_{d/2}\big( R\|\wt{b} + a \| \big)J_{d/2}\big( R\| \wt{a} + b \| \big) J_{d/2}\big( R\|\wt{ a} + \wt{b} \| \big), 
 \end{align*}
  where $\F \mathfrak{f}$ stands for the Fourier transform with respect to the spatial variables  and we have used the short-hand notation 
  \begin{center}
  $a = \tau(\pmb{\xi_r}),  b=\tau(\pmb{\eta_{p-r}}), \wt{a} = \tau(\pmb{\wt{\xi}_r})$ and $ \wt{b}=\tau(\pmb{\wt{\eta}_{p-r}})$.
 \end{center}
 Recall from previous steps that,  with $X^1$ standard Brownian motion on $\R^d$,
 \begin{equation}
 (\F \mathfrak{f})(\pmb{s_p}, \pmb{\xi_p}) = (\F f_{t,0,p})(\pmb{s_p}, \pmb{\xi_p})  \label{expform}
  = \frac{1}{p!}  \E\left[ \exp\Big( -\i \sum_{j=1}^p (X_t^1 - X_{s_j }^1) \cdot \xi_j\Big) \right], 
 \end{equation}
 which is a positive, bounded and uniformly continuous function in $\pmb{\xi_p}$. As in the proof of Theorem \ref{thm2} (Step 4), we decompose  the integral in the spatial variable into two parts, that is, we write  for any given $\delta>0$,
 \[
 \wt{\mathcal{J}}_R =  \wt{\mathcal{J}}_{1,R} +  \wt{\mathcal{J}}_{2,R} := \int_{\R^{2pd}} {\bf 1}_{\{ \| a+ b\| \geq \delta \}} + \int_{\R^{2pd}} {\bf 1}_{\{ \| a+ b\| < \delta \}}.
 \]
 Similar to the arguments in Step 4 of  the proof of Theorem \ref{thm2}, by using Cauchy-Schwarz inequality several times, we can write 
 \begin{align*}
 \wt{\mathcal{J}}_{1,R} &\leq \omega_d^2 \left( \int_{ \{ \| \tau(\pmb{\xi_p}) \geq \delta   \} } \ell_R( \tau(\pmb{\xi_p})  )  \bv\F \mathfrak{f} \bv^2(\pmb{s_r}, \pmb{t_{p-r}},\pmb{\xi_p})   \mu_1(d\pmb{\xi_p})  \right)^{1/2} \\
 &\quad \times \left( \int_{  \R^{pd}} \ell_R( \tau(\pmb{\xi_p})  )  \bv\F \mathfrak{f} \bv^2(\pmb{\wt{v}_r}, \pmb{\wt{w}_{p-r}},\pmb{\xi_p})   \mu_1(d\pmb{\xi_p})  \right)^{1/2} \\
  &\quad \times \left( \int_{  \R^{pd}} \ell_R( \tau(\pmb{\xi_p})  )  \bv\F \mathfrak{f} \bv^2(\pmb{\wt{s}_r}, \pmb{w_{p-r}},\pmb{\xi_p})   \mu_1(d\pmb{\xi_p})  \right)^{1/2} \\
   &\quad \times \left( \int_{  \R^{pd}} \ell_R( \tau(\pmb{\xi_p})  )  \bv\F \mathfrak{f} \bv^2(\pmb{v_r}, \pmb{\wt{t}_{p-r}},\pmb{\xi_p})   \mu_1(d\pmb{\xi_p})  \right)^{1/2}.
 \end{align*}
 Therefore, by Cauchy-Schwarz  inequality again applied to the integration in time, we get
  \begin{align}
&\quad  \int_{[0,t]^{4p}} d\pmb{s_r}  d\pmb{\wt{s}_r}d\pmb{v_r} d\pmb{\wt{v}_r} d\pmb{t_{p-r}} d\pmb{\wt{t}_{p-r}}d\pmb{w_{p-r}} d\pmb{\wt{w}_{p-r}}   \left( \prod_{i=1}^r \gamma_0(s_i-\wt{s}_i)\gamma_0(v_i-\wt{v}_i)\right) \notag \\
&\qquad \times \left( \prod_{j=1}^{p-r} \gamma_0(t_j-\wt{t}_j)\gamma_0(w_j-\wt{w}_j)\right)  \wt{\mathcal{J}}_{1,R}  \label{part-one}\\
&\leq   \omega_d^2\Bigg\{ \int_{[0,t]^{4p}} d\pmb{s_r}  d\pmb{\wt{s}_r}d\pmb{v_r} d\pmb{\wt{v}_r} d\pmb{t_{p-r}} d\pmb{\wt{t}_{p-r}}d\pmb{w_{p-r}} d\pmb{\wt{w}_{p-r}}   \left( \prod_{i=1}^r \gamma_0(s_i-\wt{s}_i)\gamma_0(v_i-\wt{v}_i)\right) \notag \\
&\qquad \times \left( \prod_{j=1}^{p-r} \gamma_0(t_j-\wt{t}_j)\gamma_0(w_j-\wt{w}_j)\right) \left( \int_{  \R^{pd}} \ell_R( \tau(\pmb{\xi_p})  )  \bv\F \mathfrak{f} \bv^2(\pmb{\wt{v}_r}, \pmb{\wt{w}_{p-r}},\pmb{\xi_p})   \mu_1(d\pmb{\xi_p})  \right)  \notag \\
&\qquad \qquad\qquad \times   \int_{ \{ \| \tau(\pmb{\xi_p}) \geq \delta   \} } \ell_R( \tau(\pmb{\xi_p})  )  \bv\F \mathfrak{f} \bv^2(\pmb{s_r}, \pmb{t_{p-r}},\pmb{\xi_p})   \mu_1(d\pmb{\xi_p})  \Bigg\}^{1/2}  \notag\\
&\times  \Bigg\{ \int_{[0,t]^{4p}} d\pmb{s_r}  d\pmb{\wt{s}_r}d\pmb{v_r} d\pmb{\wt{v}_r} d\pmb{t_{p-r}} d\pmb{\wt{t}_{p-r}}d\pmb{w_{p-r}} d\pmb{\wt{w}_{p-r}}   \left( \prod_{i=1}^r \gamma_0(s_i-\wt{s}_i)\gamma_0(v_i-\wt{v}_i)\right) \notag \\
&\qquad \times \left( \prod_{j=1}^{p-r} \gamma_0(t_j-\wt{t}_j)\gamma_0(w_j-\wt{w}_j)\right) \left( \int_{  \R^{pd}} \ell_R( \tau(\pmb{\xi_p})  )  \bv\F \mathfrak{f} \bv^2(\pmb{\wt{s}_r}, \pmb{w_{p-r}},\pmb{\xi_p})   \mu_1(d\pmb{\xi_p})  \right)  \notag \\
&\qquad \qquad\qquad \times   \int_{ \R^{pd}} \ell_R( \tau(\pmb{\xi_p})  )  \bv\F \mathfrak{f} \bv^2(\pmb{v_r}, \pmb{\wt{t}_{p-r}},\pmb{\xi_p})   \mu_1(d\pmb{\xi_p})  \Bigg\}^{1/2} \notag \\
& =:  \omega_d^2 V_1 ^{1/2}     V_2  ^{1/2}.  \notag
 \end{align}
 We will prove that  $V_1\to 0$   as $R\to +\infty$ and $V_2$ is uniformly bounded. For the term $V_1$,
 we have the estimate
  \begin{align*}
 V_1 &\leq   \Gamma_t^{2p} \left[ \int_{[0,t]^{p}} d\pmb{t_p}  \int_{  \R^{pd}} \ell_R( \tau(\pmb{\xi_p})  )  \bv\F \mathfrak{f} \bv^2(\pmb{t_p},\pmb{\xi_p})   \mu_1(d\pmb{\xi_p})  \right]\\
 &\qquad\qquad\qquad  \times \int_{[0,t]^p} d\pmb{s_p}   \int_{ \{ \| \tau(\pmb{\xi_p}) \geq \delta   \} } \ell_R( \tau(\pmb{\xi_p})  )  \bv\F \mathfrak{f} \bv^2(\pmb{s_p},\pmb{\xi_p})   \mu_1(d\pmb{\xi_p})\\
 &=:   \Gamma_t^{2p} V_{11}  V_{12}.
 \end{align*}
   We claim  that $V_{11}$ is uniformly bounded and $ V_{12}$ vanishes asymptotically as $R\rightarrow +\infty$. 
  In view of   \eqref{expform},  making the   change of variables $t_j = t - s_j$ and $\eta_j =  \xi_1 +\dots + \xi_j $ for each $j=1,\dots, p$, with $\eta_0=0$, we obtain, using \eqref{fromh}
  \begin{align*}
V_{11}& =\frac{1}{(p!)^2} \int_{[0,t]^{p}} d\pmb{s_p}  \int_{  \R^{pd}} \mu_1(d\pmb{\xi_p})\ell_R( \tau(\pmb{\xi_p})  ) \left( \E\left[ \exp\Big( -\i \sum_{j=1}^p  X_{s_j }^1 \cdot \xi_j\Big) \right] \right)^2\\
&=\frac{1}{p!} \int_{\Delta_p(t)}d\pmb{s_p}  \int_{  \R^{pd}} \mu_1(d\pmb{\xi_p})\ell_R( \tau(\pmb{\xi_p})  ) \exp\left( -\sum_{j=1}^p (s_j -s_{j+1}) \| \xi_1 + \dots+ \xi_j \|^2  \right) \\
&=\frac{1}{p!} \int_{\R^d} d\eta_p \ell_R(\eta_p)  \int_{\R^{pd-d}}d\pmb{\eta_{p-1}} \int_{\text{SIM}_p(t)}d\pmb{w_p}    \prod_{j=1}^p e^{- w_j \| \eta_j \|^2  } \varphi_1(\eta_j - \eta_{j-1}) \\
&\leq \frac{t}{p!} \| \varphi_1\|_\infty \sum_{j=0}^{p-1} {p-1\choose j} \frac{t^j}{j!} D_N^j C_N^{p-1-j} < +\infty.
\end{align*}
    In the same way, we have 
\[
V_{12} \le \left( \int_{\{ \| \tau_1\| \geq \delta \}  } d\tau_1 \ell_R(\tau_1)  \right)\frac{t  \| \varphi_1\|_\infty}{p!} \sum_{j=0}^{p-1} {p-1\choose j} \frac{t^j}{j!} D_N^j C_N^{p-1-j},
\]
 which converges to zero as $R$ tends to infinity.  By the same arguments, we can get the uniform boundedness of $V_2$ as   $R$ tends to infinity. Thus,  the term \eqref{part-one} does not contribute to the limit of $\big\| g_{p,R}(t) \otimes_r g_{p,R}(t) \big\| _{\mathscr{H}^{\otimes  (2p-2r)}}^2$ as $R\to +\infty$.

\medskip

 Now let us look at the second term and we need to prove that
  \begin{align}
&\quad \mathfrak{X}_R := \int_{[0,t]^{4p}} d\pmb{s_r}  d\pmb{\wt{s}_r}d\pmb{v_r} d\pmb{\wt{v}_r} d\pmb{t_{p-r}} d\pmb{\wt{t}_{p-r}}d\pmb{w_{p-r}} d\pmb{\wt{w}_{p-r}}   \left( \prod_{i=1}^r \gamma_0(s_i-\wt{s}_i)\gamma_0(v_i-\wt{v}_i)\right) \notag \\
&\qquad \times \left( \prod_{j=1}^{p-r} \gamma_0(t_j-\wt{t}_j)\gamma_0(w_j-\wt{w}_j)\right)  \wt{J}_{2,R} \xrightarrow{R\to+\infty} 0.  \notag 
 \end{align}
 We can first rewrite $\omega_d^{-2} \wt{J}_{2,R}$ as we did for $\int_{\R^{pd}\times D_\delta^c }$ in the proof of Theorem \ref{thm2}. In fact, using Cauchy-Schwarz multiple times, we  obtain
 \begin{align*}
&   \omega_d^{-2} \wt{J}_{2,R} \leq  \int_{\{  \| a+b\| < \delta  \}}  \mu_1(d\pmb{\xi_r})  \mu_1(d\pmb{\eta_{p-r}}) \sqrt{ \ell_R(a+b) } ~ \F\mathfrak{f} \big(  \pmb{s_r}, \pmb{t_{p-r}},   \pmb{ \eta_{p-r}}, \pmb{\xi_r} \big) \\
&  \times \left(   \int_{\R^{pd}} \mu_1(d\pmb{\wt{\xi}_r})  \mu_1(d\pmb{\wt{\eta}_{p-r}})  \ell_R(\wt{a}+\wt{b}) \bv \F\mathfrak{f} \bv^2\big(  \pmb{\wt{v}_r}, \pmb{\wt{w}_{p-r}},   \pmb{ \wt{\eta}_{p-r}}, \pmb{\wt{\xi}_r} \big)   \right)^{1/2}   \Bigg\{   \int_{\R^{pd}} \mu_1(d\pmb{\wt{\xi}_r})  \\
&\times  \mu_1(d\pmb{\wt{\eta}_{p-r}}) \ell_R(\wt{a}+ b) \ell_R(a+\wt{b}) \bv\F \mathfrak{f} \bv^2\big(  \pmb{v_r}, \pmb{\wt{t}_{p-r}},   \pmb{ \wt{\eta}_{p-r}}, \pmb{\xi_r} \big)  \bv\F \mathfrak{f}\bv^2\big(  \pmb{\wt{s}_r}, \pmb{w_{p-r}}, \pmb{\eta_{p-r}}, \pmb{\wt{\xi}_r}\big)   \Bigg\}^{1/2} \\
&  \leq \Bigg[  \left(   \int_{\R^{pd}} \mu_1(d\pmb{\wt{\xi}_r})  \mu_1(d\pmb{\wt{\eta}_{p-r}})  \ell_R(\wt{a}+\wt{b}) \bv \F\mathfrak{f} \bv^2\big(  \pmb{\wt{v}_r}, \pmb{\wt{w}_{p-r}},   \pmb{ \wt{\eta}_{p-r}}, \pmb{\wt{\xi}_r} \big)   \right) \\
&\qquad \times \left( \int_{\{  \| a+b\| < \delta  \}}  \mu_1(d\pmb{\xi_r})  \mu_1(d\pmb{\eta_{p-r}})  \ell_R(a+b)  \bv \F\mathfrak{f}\bv^2 \big(  \pmb{s_r}, \pmb{t_{p-r}},   \pmb{ \eta_{p-r}}, \pmb{\xi_r} \big) \right)\Bigg]^{1/2} \\
&\qquad\times \Bigg[  \int_{ \{ \| a+b\| < \delta\} \times\R^{pd}}  \mu_1(d\pmb{\xi_r})  \mu_1(d\pmb{\eta_{p-r}})   \mu_1(d\pmb{\wt{\xi}_r})  \mu_1(d\pmb{\wt{\eta}_{p-r}})   \\
&\qquad \times   \bv \F\mathfrak{f}\bv^2 \big(  \pmb{\wt{s}_r}, \pmb{w_{p-r}},   \pmb{ \eta_{p-r}}, \pmb{\wt{\xi}_r} \big)   \bv\F \mathfrak{f}\bv^2 \big(  \pmb{v_r}, \pmb{\wt{t}_{p-r}},   \pmb{\wt{\eta}_{p-r}}, \pmb{\xi_r} \big)     \ell_R(\wt{a}+ b) \ell_R(a+\wt{b})        \Bigg]^{1/2}\\
&: = \wt{V}_1 ^{1/2}  \wt{V}_2 ^{1/2}.
  \end{align*}
 Therefore, 
 \[ 
    \omega_d^{-2} \mathfrak{X}_R \leq  \sqrt{  \mathfrak{X}_{1,R}  \mathfrak{X}_{2,R}  },
 \]
 where 
 \begin{align*}
 & \mathfrak{X}_{1,R} := \int_{[0,t]^{4p}} d\pmb{s_r}  d\pmb{\wt{s}_r}d\pmb{v_r} d\pmb{\wt{v}_r} d\pmb{t_{p-r}} d\pmb{\wt{t}_{p-r}}d\pmb{w_{p-r}} d\pmb{\wt{w}_{p-r}}   \left( \prod_{i=1}^r \gamma_0(s_i-\wt{s}_i)\gamma_0(v_i-\wt{v}_i)\right)\\
 &\qquad \times \left( \prod_{j=1}^{p-r} \gamma_0(t_j-\wt{t}_j)\gamma_0(w_j-\wt{w}_j)\right)  \wt{V}_1 
 \end{align*}
 is uniformly bounded over  $R>0$, as one can verify by the same arguments as before, and
 \begin{align*}
&  \mathfrak{X}_{2,R} :=  \int_{[0,t]^{4p}} d\pmb{s_r}  d\pmb{\wt{s}_r}d\pmb{v_r} d\pmb{\wt{v}_r} d\pmb{t_{p-r}} d\pmb{\wt{t}_{p-r}}d\pmb{w_{p-r}} d\pmb{\wt{w}_{p-r}}   \left( \prod_{i=1}^r \gamma_0(s_i-\wt{s}_i)\gamma_0(v_i-\wt{v}_i)\right)\\
 & \times \left( \prod_{j=1}^{p-r} \gamma_0(t_j-\wt{t}_j)\gamma_0(w_j-\wt{w}_j)\right)   \int_{ \{ \| a+b\| < \delta\} \times\R^{pd}}  \mu_1(d\pmb{\xi_r})  \mu_1(d\pmb{\eta_{p-r}})   \mu_1(d\pmb{\wt{\xi}_r})  \mu_1(d\pmb{\wt{\eta}_{p-r}})   \\
&\qquad \times   \bv\F \mathfrak{f}\bv^2 \big(  \pmb{\wt{s}_r}, \pmb{w_{p-r}},   \pmb{ \eta_{p-r}}, \pmb{\wt{\xi}_r} \big)   \bv\F \mathfrak{f}\bv^2 \big(  \pmb{v_r}, \pmb{\wt{t}_{p-r}},   \pmb{\wt{\eta}_{p-r}}, \pmb{\xi_r} \big)     \ell_R(\wt{a}+ b) \ell_R(a+\wt{b})   \\
&\leq \Gamma_t^{2p}\int_{[0,t]^{2p}} d\pmb{\wt{s_r} }d\pmb{\wt{t}_{p-r} } d\pmb{v_r}d\pmb{w_{p-r}}  \int_{ \{ \| a+b\| < \delta\} \times\R^{pd}}  \mu_1(d\pmb{\xi_r})  \mu_1(d\pmb{\eta_{p-r}})   \mu_1(d\pmb{\wt{\xi}_r}) \mu_1(d\pmb{\wt{\eta}_{p-r}})     \\
&\qquad \times  \bv\F \mathfrak{f}\bv^2 \big(  \pmb{\wt{s}_r}, \pmb{w_{p-r}},   \pmb{ \eta_{p-r}}, \pmb{\wt{\xi}_r} \big)   \bv\F \mathfrak{f}\bv^2 \big(  \pmb{v_r}, \pmb{\wt{t}_{p-r}},   \pmb{\wt{\eta}_{p-r}}, \pmb{\xi_r} \big)     \ell_R(\wt{a}+ b) \ell_R(a+\wt{b}) \\
&=\Gamma_t^{2p}   \int_{ \R^{2pd}}  \mu_1(d\pmb{\xi_p})  \mu_1(d\pmb{\wt{\xi}_{p}})  {\bf 1}_{ \{  \| \xi_1 + \dots+ \xi_r  + \wt{\xi}_{r+1} + \dots+ \wt{\xi}_{p}   \| < \delta\} }  \ell_R\big( \tau(\pmb{\xi_p}) \big)    \ell_R\big( \tau(\pmb{\wt{\xi}_p}) \big)   \\  
&\qquad  \times  \left( \int_{[0,t]^p} d\pmb{s_p}    \bv\F \mathfrak{f}\bv^2 \big(  \pmb{s_p},    \pmb{\wt{\xi}_p} \big) \right) \left(  \int_{[0,t]^p}  d\pmb{t_{p}}     \bv\F \mathfrak{f}\bv^2 \big(  \pmb{t_p},   \pmb{\xi_p} \big) \right)    . 
 \end{align*}
Using  (\ref{expform}) and a change of variable in time,  we can rewrite the last  expression  as follows 
\begin{align} 
\mathfrak{X}_{2,R} & \le  \frac{\Gamma_t^{2p}}{(p!)^2}   \int_{ \R^{2pd}}  \mu_1(d\pmb{\xi_p})  \mu_1(d\pmb{\wt{\xi}_{p}})  {\bf 1}_{ \{  \| \xi_1 + \dots+ \xi_r  + \wt{\xi}_{r+1} + \dots+ \wt{\xi}_{p}   \| < \delta\} } \ell_R\big( \tau(\pmb{\xi_p}) \big)    \ell_R\big( \tau(\pmb{\wt{\xi}_p}) \big) \notag  \\  
&  \qquad \times    \int_{[0,t]^{2p}} d\pmb{s_p}   d\pmb{t_p}   \E\left[ \exp\Big( -\i \sum_{j=1}^p   X_{s_j }^1 \cdot \wt{\xi}_j\Big) \right]    \E\left[ \exp\Big( -\i \sum_{j=1}^p   X_{t_j }^2 \cdot \xi_j\Big) \right]. \notag
 \end{align}
For $\pmb{s_p}\in\Delta_p(t)$, we write 
\[
 \E\left[ \exp\Big( -\i \sum_{j=1}^p   X_{s_j }^1 \cdot \wt{\xi}_j\Big) \right] = \exp\left(  - \sum_{j=1}^p \frac{s_{\sigma(j)} - s_{\sigma(j+1)}}{2} \| \wt{\xi}_{\sigma(1)} + \dots+ \wt{\xi}_{\sigma(j)} \|^2 \right).
\]
Then
\begin{align*}
 &\int_{[0,t]^{p}} d\pmb{s_p}      \E\left[ \exp\Big( -\i \sum_{j=1}^p   X_{s_j }^1 \cdot \wt{\xi}_j\Big) \right]   \\
&= \sum_{\sigma\in\mathfrak{S}_p} \int_{\text{SIM}_p(t/2)} d\pmb{\wt{w}_p} \exp\left(  - \sum_{j=1}^p \wt{w}_j \| \wt{\xi}_{\sigma(1)} + \dots+ \wt{\xi}_{\sigma(j)} \|^2 \right)
\end{align*}
and in the same way,
\begin{align*}
 &\int_{[0,t]^{p}} d\pmb{t_p}      \E\left[ \exp\Big( -\i \sum_{j=1}^p   X_{t_j }^2 \cdot \xi_j\Big) \right]   \\
&= \sum_{\pi\in\mathfrak{S}_p} \int_{\text{SIM}_p(t/2)} d\pmb{w_p} \exp\left(  - \sum_{j=1}^p w_j \| \xi_{\pi(1)} + \dots+\xi_{\pi(j)} \|^2 \right).
\end{align*}
By a further change of variables $ \xi_{\pi(1)} + \dots +\xi_{\pi(j)}  = \eta_j$ and $\wt{\xi}_{\sigma(1)} + \dots + \wt{\xi}_{\sigma(j)} = \wt{\eta}_j$ for given $\sigma, \pi$, we can write 
\[
 {\bf 1}_{ \{  \| \xi_1 + \dots+ \xi_r  + \wt{\xi}_{r+1} + \dots+ \wt{\xi}_{p}   \| < \delta\} } =  {\bf 1}_{ \{  \|  L(\pmb{\eta_p},\pmb{\wt{\eta}_p} )  \| < \delta\} },
\]
where  $L(\pmb{\eta_p},\pmb{\wt{\eta}_p} )$ stands for linear combinations of $\eta_1, \dots, \eta_p, \wt{\eta}_1, \dots \wt{\eta}_p$ that depend on $\sigma,\pi$. With this notation, we have 
\begin{align*}
\mathfrak{X}_{2,R} & \le  \frac{\Gamma_t^{2p}}{(p!)^2}  \sum_{\sigma, \pi\in\mathfrak{S}_p} \int_{\R^{2d}} d\eta_p d\wt{\eta}_p \ell_R(\eta_p)\ell_R(\wt{\eta}_p) \int_{\text{SIM}_p(t/2)^2} d\pmb{w_p} d\pmb{\wt{w}_p} \int_{\R^{2pd-2d}} d\pmb{\eta_{p-1}} d\pmb{\wt{\eta}_{p-1}} \\
&  \qquad  \times \left(\prod_{j=1}^{p-1} \varphi_1(\eta_j - \eta_{j-1}) e^{-w_j \| \eta_j\|^2} \varphi_1(\wt{\eta}_j - \wt{\eta}_{j-1}) e^{-w_j \| \wt{\eta}_j\|^2} \right) \\
&\qquad   \times \varphi_1(\eta_p - \eta_{p-1})  \varphi_1(\eta_p - \eta_{p-1})  e^{-w_p \| \eta_p\|^2 - \wt{w}_p \| \wt{\eta}_p \|^2} {\bf 1}_{ \{  \|  L(\pmb{\eta_p},\pmb{\wt{\eta}_p} )  \| < \delta\} } \\
& =:   \frac{\Gamma_t^{2p}}{(p!)^2}   \sum_{\sigma, \pi\in\mathfrak{S}_p} \int_{\R^{2d}} d\eta_p d\wt{\eta}_p \ell_R(\eta_p)\ell_R(\wt{\eta}_p)  \mathcal{E}^{\sigma,\pi}_\delta(\eta_p, \wt{\eta}_p)
\end{align*}
where $ \mathcal{E}^{\sigma,\pi}_\delta$ is defined in an obvious way.
By the arguments leading to   \eqref{fromh}, it is clear that
 $ \mathcal{E}^{\sigma,\pi}_\delta$ is uniformly bounded. It follows that
 \begin{align*}
& \limsup_{R\to+\infty}\int_{\R^{2d}} d\eta_p d\wt{\eta}_p \ell_R(\eta_p)\ell_R(\wt{\eta}_p)  \mathcal{E}^{\sigma,\pi}_\delta(\eta_p, \wt{\eta}_p) \\
&=  \limsup_{R\to+\infty} \int_{\R^{2d}} d\eta_p d\wt{\eta}_p \ell_R(\eta_p)\ell_R(\wt{\eta}_p)  \mathcal{E}^{\sigma,\pi}_\delta(\eta_p, \wt{\eta}_p) {\bf 1}_{\{ \| \eta_p \| < \delta,  \|\wt{\eta}_p \| < \delta  \}}.
 \end{align*}
 For fixed $\sigma,\pi\in\mathfrak{S}_p$, we have the decomposition $ L(\pmb{\eta_p},\pmb{\wt{\eta}_p} ) = L_1(\eta_p, \wt{\eta}_p)  +  L_2(\pmb{\eta_{p-1}},\pmb{\wt{\eta}_{p-1}} )$, where $L_1(\eta_p, \wt{\eta}_p)$ stands for a linear combination of $\eta_p$ and $\wt{\eta}_p$, while $L_2(\pmb{\eta_{p-1}},\pmb{\wt{\eta}_{p-1}} )$   stands for linear combinations of $\eta_1,\dots, \eta_{p-1}, \wt{\eta}_1, \dots, \wt{\eta}_{p-1}$. Notice that $L_1$ and $L_2$ also depend on $\sigma,\pi$. If $\|\eta_p\| , \| \wt{\eta}_p\| < \delta $, then there exists some constant $K = K(\sigma, \pi)$ such that 
 \[
 \|  L_1(\eta_p, \wt{\eta}_p) \| < K\delta,
  \]
 thus $ {\bf 1}_{ \{  \|  L(\pmb{\eta_p},\pmb{\wt{\eta}_p} )  \| < \delta\} } \leq  {\bf 1}_{ \{  \|  L_2(\pmb{\eta_{p-1}},\pmb{\wt{\eta}_{p-1}} ) \| < (K+1)\delta\} }.$
 As a consequence,
 \begin{align*}
& \int_{\R^{2d}} d\eta_p d\wt{\eta}_p \ell_R(\eta_p)\ell_R(\wt{\eta}_p)  \mathcal{E}^{\sigma,\pi}_\delta(\eta_p, \wt{\eta}_p) {\bf 1}_{\{ \| \eta_p \| < \delta,  \|\wt{\eta}_p \| < \delta  \}} \\
&\leq  t^2 \| \varphi_1\|^2_\infty \int_{\R^{2d}} d\eta_p d\wt{\eta}_p \ell_R(\eta_p)\ell_R(\wt{\eta}_p) \int_{\text{SIM}_{p-1}(t)^2} d\pmb{w_{p-1}} d\pmb{\wt{w}_{p-1}} \int_{\R^{2pd-2d}} d\pmb{\eta_{p-1}} d\pmb{\wt{\eta}_{p-1}} \\
&    \quad \times \left(\prod_{j=1}^{p-1} \varphi_1(\eta_j - \eta_{j-1}) e^{-w_j \| \eta_j\|^2} \varphi_1(\wt{\eta}_j - \wt{\eta}_{j-1}) e^{-w_j \| \wt{\eta}_j\|^2} \right) {\bf 1}_{ \{  \|  L_2(\pmb{\eta_{p-1}},\pmb{\wt{\eta}_{p-1}} ) \| < (K+1)\delta\} } \\
& = t^2 \| \varphi_1\|^2_\infty \int_{\text{SIM}_{p-1}(t)^2} d\pmb{w_{p-1}} d\pmb{\wt{w}_{p-1}} \int_{\R^{2pd-2d}} d\pmb{\eta_{p-1}} d\pmb{\wt{\eta}_{p-1}} \\
&    \quad \times \left(\prod_{j=1}^{p-1} \varphi_1(\eta_j - \eta_{j-1}) e^{-w_j \| \eta_j\|^2} \varphi_1(\wt{\eta}_j - \wt{\eta}_{j-1}) e^{-w_j \| \wt{\eta}_j\|^2} \right) {\bf 1}_{ \{  \|  L_2(\pmb{\eta_{p-1}},\pmb{\wt{\eta}_{p-1}} ) \| < (K+1)\delta\} } \\
&  =: t^2 \| \varphi_1\|^2_\infty  T_\delta(\sigma,\pi).
 \end{align*}
By previous arguments, 
\begin{align*}
 &\int_{\text{SIM}_{p-1}(t)^2} d\pmb{w_{p-1}} d\pmb{\wt{w}_{p-1}} \int_{\R^{2pd-2d}} d\pmb{\eta_{p-1}} d\pmb{\wt{\eta}_{p-1}} \\
&    \quad \times \left( \prod_{j=1}^{p-1} \varphi_1(\eta_j - \eta_{j-1}) e^{-w_j \| \eta_j\|^2} \varphi_1(\wt{\eta}_j - \wt{\eta}_{j-1}) e^{-w_j \| \wt{\eta}_j\|^2} \right) < \infty.
\end{align*}
 Therefore, taking into account that  $L_2(\pmb{\eta_{p-1}},\pmb{\wt{\eta}_{p-1}} )\neq 0$ for almost every $\pmb{\eta_{p-1}}$ and $\pmb{\wt{\eta}_{p-1}}$,  we obtain $T_\delta(\sigma,\pi)\to 0$, as $\delta\downarrow 0$ and
 \[
 \limsup_{R\to+\infty} \mathfrak{X}_{2,R}  \leq  t^2 \| \varphi_1\|^2_\infty   \sum_{\sigma, \pi\in\mathfrak{S}_p}T_\delta(\sigma,\pi),
 \]
 which converges to zero,  as $\delta\downarrow 0$.  This concludes the proof of condition (c).

\medskip
  Combing the above steps, we conclude that if $t_1, t_2,  \dots, t_n\in\R_+$, then 
  \[
  R^{-d/2} \big( A_{t_1}(R), \dots, A_{t_n}(R) \big) \xrightarrow[R\to+\infty]{\rm law} N\Big(0,  \big(\Sigma_{t_i, t_j}\big)_{i,j=1}^n  \Big),
  \]
 where $\Sigma_{t_i, t_j}$ is defined  in \eqref{SIGst}.

\subsection{Proof of tightness in Theorem \ref{thmSHE}}\label{312} 
 In this section, we are going to prove the tightness of $\big\{ \frac{A_t(R)}{R^{d/2} }, t\ge 0 \big\}$ under the extra condition \eqref{ADDc}. Under this condition, one can see easily that 
\begin{align}
\Gamma_{t,\alpha}  := \int_0^{t}\int_0^{t} \gamma_0(r-v) r^{-\alpha} v^{-\alpha} drdv < +\infty \label{GAMMASA}
 \end{align}
 for any $t>0$.
 
 Recall that $\alpha\in(0,1/2)$ is fixed. For    any  $T>0$, we will   show for any $0< s< t \leq T$ and any  integer $k\in[2,\infty)$ 
  \begin{align}\label{G0}
R^{-d/2}\big\|  A_t(R) - A_s(R)\big\|_{L^k(\Omega)}\leq C  \vert t-s\vert^{\alpha},
  \end{align}
  where $C=C_{T,k,\alpha}$ is a constant that depends on $T, k$ and $\alpha$.  If we   pick a large $k$ such that $k\alpha > 2$, we get   the desired tightness by Kolmogorov's criterion.   To show \eqref{G0}, we first derive  the  Wiener chaos expansion of $A_t(R)- A_s(R)$ and apply  the hypercontractivity property of the Ornstein-Uhlenbeck semigroup  (see \emph{e.g.} \cite{Nualart06})  that allows us to
estimate the $L^k(\Omega)$-norm by the $L^2(\Omega)$-norm on a fixed Wiener chaos.

We know that
\[
u_{t,x} = 1 + \int_{\R_+\times\R^d} G(t-s_1, x- y_1){\bf 1}_{[0,t)}(s_1) u_{s_1, y_1} W(ds_1, dy_1)
\]
and if we put 
$$
d(s,t,x;s_1, y_1)=G(t-s_1, x- y_1){\bf 1}_{[0,t)}(s_1) -  G(s-s_1, x- y_1){\bf 1}_{[0,s)}(s_1)
$$
for $s<t$,  we can write 
\[
u_{t,x} - u_{s,x} =\int_{\R_+\times\R^d}  d(s,t,x;s_1, y_1)  u_{s_1, y_1} W(ds_1, dy_1).
\]
We can write $d(s,t,x;s_1, y_1)  = d_1(s,t,x;s_1, y_1) + d_2(s,t,x;s_1, y_1) $ with
\begin{align}\label{DD1}
d_1(s,t,x;s_1, y_1)  =  {\bf 1}_{[0,s)}(s_1) \big[ G(t-s_1, x- y_1)  -  G(s-s_1, x- y_1) \big]   
\end{align}
and
\begin{align}\label{DD2}
 d_2(s,t,x;s_1, y_1) =  {\bf 1}_{[s,t)}(s_1) G(t-s_1, x- y_1).
\end{align}
According to    \cite[Lemma 3.1]{CH}, there exists some constant $C_\alpha$ that depends on $\alpha$ such that
\begin{align}\label{CH31}
\big\vert d_1(s,t,x;s_1, y_1)  \big\vert \leq C_\alpha  (t-s)^{\alpha}  (s-s_1)^{-\alpha} G(4t-4s_1, x- y_1)  {\bf 1}_{[0,s)}(s_1).
\end{align}

Now we can express $A_t(R)-A_s(R)$ as a sum of two chaos expansions that correspond to $d_1$ and $d_2$:
\begin{align*}
A_t(R)-A_s(R)   &=  \sum_{p\ge 1} \int_{B_R}  I_p^W\big( \mathfrak{g}_{1,p,x} \big) dx   +  \sum_{q\geq 1} \int_{B_R} I_q^W\big( \mathfrak{g}_{2,q,x} dx \big)\\
  &=: \sum_{p\ge 1} J_{1,p,R}   +  \sum_{q\geq 1} J_{2,q,R},
\end{align*}
 where $J_{i,p,R} =\int_{B_R}  I_p^W\big( \mathfrak{g}_{i,p,x} \big) dx  $ for $i\in\{1,2\}$ and 
\begin{align*}
\mathfrak{g}_{1,p,x}(\pmb{s_p},\pmb{y_p}) &= \frac{1 }{p!} \sum_{\sigma\in\mathfrak{S}_p} {\bf 1}_{\Delta_p(s)}(\pmb{s^\sigma_p})  d_1(s,t,x;s^\sigma_1, y^\sigma_1)  \prod_{j=1}^{p-1}  G(s^\sigma_j -s^\sigma_{j+1}, y^\sigma_j- y^\sigma_{j+1})\\
\mathfrak{g}_{2,p,x}(\pmb{s_p},\pmb{y_p})  &= \frac{1}{p!} \sum_{\sigma\in\mathfrak{S}_p} {\bf 1}_{\Delta_p(s,t)}(\pmb{s^\sigma_p})  G(t-s^\sigma_1, x-y^\sigma_1) \prod_{j=1}^{p-1}  G(s^\sigma_j -s^\sigma_{j+1}, y^\sigma_j- y^\sigma_{j+1}),
\end{align*}
 with $\Delta_p(s,t) = \{ t > s_1 > \dots > s_p > s\}$.  

 \medskip

 Let us first estimate the $L^2(\Omega)$-norm of $J_{2,p,R}$ in several familiar steps.  As in \eqref{DTW}, \eqref{FK} and \eqref{acexp}, we write for $p\geq 1$, with $X^1, X^2$ independent  standard Brownian motions on $\R^d$, 
 \begin{align*}
 \big\langle    \mathfrak{g}_{2,p,x},  \mathfrak{g}_{2,p,y}  \big\rangle_{\mathscr{H}^{\otimes p}} &= \frac{1}{(p!)^2} \int_{[0,t-s)^{2p}} d\pmb{s_p}d\pmb{r_p} \prod_{j=1}^p \gamma_0(s_j - r_j) \int_{\R^{pd}} \mu_1(d\pmb{\xi_p})  e^{-\i (x-y) \cdot \tau(\pmb{\xi_p})} \\
 &\qquad \times \E\left[  \exp\left(  -\i \sum_{j=1}^p \xi_j  \cdot  X^1_{s_j}  \right) \right] \E\left[  \exp\left(  -\i \sum_{j=1}^p \xi_j  \cdot   X^2_{r_j}   \right) \right],
 \end{align*}
 which is a nonnegative function in $x,y$ that only depends on the difference $x-y$. Observe that this inner product coincides with $\frac{1}{(p!)^2} \E\big[ \beta_{t-s, t-s}(x-y)^p \big]$ for every $p\geq 1$, see \eqref{abexp}.  Therefore,   for $p\geq 2 $, we can write by using \eqref{stp1}
 \begin{align*}
 \big\| J_{2,p,R} \big\|^2_{L^2(\Omega)} &=  p! \int_{B_R^2}dxdy \big\langle    \mathfrak{g}_{2,p,x},  \mathfrak{g}_{2,p,y}  \big\rangle_{\mathscr{H}^{\otimes p}} \leq  p! \omega_d R^d \int_{\R^d}dz  \big\langle    \mathfrak{g}_{2,p,0},  \mathfrak{g}_{2,p,z}  \big\rangle_{\mathscr{H}^{\otimes p}}  \\
 &= \frac{ \omega_d R^d}{p!}\int_{\R^d} dz \E\big[ \beta_{t-s, t-s}(z)^p \big]  \\
 &\leq \omega_d R^d \| \varphi_1\|_\infty   (2\pi)^d \Gamma_{t-s}^p  (t-s) (4C_N)^{p-1}\exp\Big( \frac{(t-s)D_N}{2C_N} \Big)  \\
 &\leq  (t-s)R^d \Big\{ (2\pi)^d \omega_d \|\varphi_1\|_\infty \exp\Big( \frac{TD_N}{2C_N} \Big)\Big\}  \Gamma_{T}^p   (4C_N)^{p-1}. 
 \end{align*}
 Hence, as a consequence of the hypercontractivity  property (see \emph{e.g.} \cite[Corollary 2.8.14]{bluebook}), we have for $k\geq 2$
 \begin{align}
& \frac{1}{R^{d/2}}  \left\|   \sum_{p\geq 2} J_{2,p,R} \right\| _{L^k(\Omega)} \leq    \frac{1}{R^{d/2}}   \sum_{p\geq 2} \left\|  J_{2,p,R} \right\| _{L^k(\Omega)} \leq    \frac{1}{R^{d/2}}  \sum_{p\geq 2}(k-1)^{p/2}   \left\|  J_{2,p,R} \right\| _{L^2(\Omega)} \notag \\
 &\leq  \sqrt{t-s}  \Big\{ (2\pi)^d \omega_d \|\varphi_1\|_\infty \exp\Big( \frac{TD_N}{2C_N} \Big)/ (4C_N)\Big\}^{1/2} \sum_{p\geq 1}  \Big[ 4(k-1) \Gamma_{T}C_N  \Big]^{p/2} \notag \\
 &=  \sqrt{t-s}  \Big\{ (2\pi)^d \omega_d \|\varphi_1\|_\infty \exp\Big( \frac{TD_N}{2C_N} \Big) \Big\}^{1/2} \frac{  \sqrt{ (k-1) \Gamma_{T} }  }{ 1 -   2 \sqrt{ (k-1) \Gamma_{T}C_N }   }, \label{ccomb1}
 \end{align}
provided  $0< 4(k-1) \Gamma_T C_N < 1$, which is always valid for some  $N>0$. 
 For $p=1$, we have, in view of \eqref{DN3},
 \begin{align*}
 R^{-d/2} \|  J_{2,1,R} \| _{L^k(\Omega)} 
& =c_k   R^{-d/2}  \| J_{2,1,R} \| _{L^2(\Omega)} 
 \le c_k \left( \int_{\R^d} \E[ \beta_{t-s,t-s} (z)] dz \right)^{1/2} \\
& \le c_k \sqrt{t-s} (\Gamma_T  \| \gamma_1\|_{L^1(\R^d)} )^{1/2},
 \end{align*}
 where $c_k =( \E[|Z|^k])^{1/k}$, with $Z\sim N(0,1)$.
 \medskip
 
Now let us estimate  the $L^2(\Omega)$-norm of $J_{1,p,R}$. Put 
\[
\wh{d}_1(s,t,x; s_1, y_1) =(s-s_1)^{-\alpha} G(4t-4s_1, x- y_1)  {\bf 1}_{[0,s)}(s_1)
\]
 and 
 \[
 \wh{ \mathfrak{g}}_{1,p,x}(\pmb{s_p},\pmb{y_p}) = \frac{1 }{p!} \sum_{\sigma\in\mathfrak{S}_p} {\bf 1}_{\Delta_p(s)}(\pmb{s^\sigma_p})  \wh{d}_1(s,t,x;s^\sigma_1, y^\sigma_1)  \prod_{j=1}^{p-1}  G(s^\sigma_j -s^\sigma_{j+1}, y^\sigma_j- y^\sigma_{j+1}).
 \]
 From \eqref{CH31} we deduce  that 
 \[
\Big\vert  \big\langle   \mathfrak{g}_{1,p,x},  \mathfrak{g}_{1,p,y}  \big\rangle_{\mathscr{H}^{\otimes p}}   \Big\vert \leq C_\alpha^2 (t-s)^{2\alpha}\big\langle   \wh{\mathfrak{g}}_{1,p,x},  \wh{\mathfrak{g}}_{1,p,y}  \big\rangle_{\mathscr{H}^{\otimes p}}. 
 \] 
 Similarly as before, we can write 
 \begin{align*}
 \big(\F \, \wh{\mathfrak{g}}_{1,p,x}\big) (\pmb{s_p}, \pmb{\xi_p}) &= \frac{1 }{p!} \sum_{\sigma\in\mathfrak{S}_p} {\bf 1}_{\Delta_p(s)}(\pmb{s^\sigma_p}) e^{-\i x\cdot \tau(\pmb{\xi_p})} (s- s^\sigma_1)^{-\alpha}  \\
& \qquad \times \E\left[ e^{ -\i  \sum_{j=1}^p (X^1_{4t} - X^{1}_{4s_1^\sigma} + X^{1}_{s_1^\sigma}  - X^{1}_{s_j^\sigma}) \cdot \xi^\sigma_j } \right],
 \end{align*}
 from which we see that $\big\langle   \wh{\mathfrak{g}}_{1,p,x},  \wh{\mathfrak{g}}_{1,p,y}  \big\rangle_{\mathscr{H}^{\otimes p}}$ is a nonnegative function that depends only on the difference $x-y$ and is given by
 \begin{align}
& \big\langle   \wh{\mathfrak{g}}_{1,p,x},  \wh{\mathfrak{g}}_{1,p,y}  \big\rangle_{\mathscr{H}^{\otimes p}} \notag \\
&=\int_{[0,s]^{2p}} d\pmb{s_p} d\pmb{r_p}  \prod_{j=1}^p\gamma_0(s_j -r_j) \int_{\R^{pd}} \mu_1(d\pmb{\xi_p})\big(\F \, \wh{\mathfrak{g}}_{1,p,x}\big) (\pmb{s_p}, \pmb{\xi_p})  \big(\F \, \wh{\mathfrak{g}}_{1,p,y}\big) (\pmb{r_p}, -\pmb{\xi_p}) \notag  \\
&=\frac{1}{(p!)^2} \sum_{\sigma,\pi\in\mathfrak{S}_p}\int_{\Delta_p(s)^{2}}  d\pmb{s^\sigma_p} d\pmb{r^\pi_p} \frac{\prod_{j=1}^p\gamma_0(s_j -r_j)  }{(s - s_1^\sigma)^{\alpha} (s - r_1^\pi)^{\alpha}}  \int_{\R^{pd}} \mu_1(d\pmb{\xi_p}) e^{-\i (x-y)\cdot \tau(\pmb{\xi_p})}  \notag  \\
&\quad   \times \E\left[ e^{ -\i  \sum_{j=1}^p (X^1_{4t} - X^{1}_{4s_1^\sigma} + X^{1}_{s_1^\sigma}  - X^{1}_{s_j^\sigma}) \cdot \xi^\sigma_j } \right] \E\left[ e^{ -\i  \sum_{j=1}^p (X^1_{4t} - X^{1}_{4r_1^\pi} + X^{1}_{r_1^\pi}  - X^{1}_{r_j^\pi}) \cdot \xi^\pi_j } \right]. \label{see2}
 \end{align}
 Then, we can write for $p\geq 2$,
 \begin{align}
 \big\| J_{1,p,R} \big\|^2_{L^2(\Omega)} &= p! \int_{B_R^2}dxdy \big\langle \mathfrak{g}_{1,p,x},  \mathfrak{g}_{1,p,y} \big\rangle_{\mathscr{H}^{\otimes p}} \notag \\
& \leq  C_\alpha^2 (t-s)^{2\alpha}p! \int_{B_R^2}dxdy \big\langle \wh{\mathfrak{g}}_{1,p,x},  \wh{\mathfrak{g}}_{1,p,y} \big\rangle_{\mathscr{H}^{\otimes p}} \notag \\
&\leq C_\alpha^2 (t-s)^{2\alpha}p! \omega_d R^d\int_{\R^d}dz \big\langle \wh{\mathfrak{g}}_{1,p,0},  \wh{\mathfrak{g}}_{1,p,z} \big\rangle_{\mathscr{H}^{\otimes p}}. \label{see1}
 \end{align}
 By the same trick of inserting $\exp\big(-\frac{\e}{2} \|z\|^2\big)$, we have 
 \begin{align}
 \int_{\R^d}dz \big\langle \wh{\mathfrak{g}}_{1,p,0},  \wh{\mathfrak{g}}_{1,p,z} \big\rangle_{\mathscr{H}^{\otimes p}} =\lim_{\e\downarrow 0} \int_{\R^d}dz \big\langle \wh{\mathfrak{g}}_{1,p,0},  \wh{\mathfrak{g}}_{1,p,z} \big\rangle_{\mathscr{H}^{\otimes p}} e^
 {-\frac{\e}{2} \|z\|^2} = : \lim_{\e\downarrow 0}\wh{T}_{p,\e}, \label{see11}
 \end{align}
 where $\wh{T}_{p,\e}$ is equal to
 \begin{align}
&  \int_{[0,s]^{2p}} d\pmb{s_p} d\pmb{r_p}  \prod_{j=1}^p\gamma_0(s_j -r_j) \int_{\R^{pd+d}}dz \mu_1(d\pmb{\xi_p})\big(\F \, \wh{\mathfrak{g}}_{1,p,0}\big) (\pmb{s_p}, \pmb{\xi_p})  \big(\F \, \wh{\mathfrak{g}}_{1,p,z}\big) (\pmb{r_p}, -\pmb{\xi_p})  \notag \\
&=\frac{(2\pi)^d}{(p!)^2} \sum_{\sigma,\pi\in\mathfrak{S}_p}\int_{\Delta_p(s)^2} d\pmb{s^\sigma_p} d\pmb{r^\pi_p} \frac{\prod_{j=1}^p\gamma_0(s_j -r_j)  }{(s - s_1^\sigma)^{\alpha} (s - r_1^\pi)^{\alpha}}   \int_{\R^{pd}} \mu_1(d\pmb{\xi_p}) G(\e,  \tau(\pmb{\xi_p}) )  \notag  \\
&\quad   \times \E\left[ e^{ -\i  \sum_{j=1}^p (X^1_{4t} - X^{1}_{4s_1^\sigma} + X^{1}_{s_1^\sigma}  - X^{1}_{s_j^\sigma}) \cdot \xi^\sigma_j } \right] \E\left[ e^{ -\i  \sum_{j=1}^p (X^1_{4t} - X^{1}_{4r_1^\pi} + X^{1}_{r_1^\pi}  - X^{1}_{r_j^\pi}) \cdot \xi^\sigma_j } \right]. \label{EASee}
 \end{align}
 Note that for $\pmb{s^\sigma_p}\in\Delta_p(s)$, $2t - 2s^{\sigma}_1 > 2s - 2s^{\sigma}_1 > \frac{1}{2}(s - s^{\sigma}_1)$ so that
 \begin{align}
& \E\left[ e^{ -\i  \sum_{j=1}^p (X^1_{4t} - X^{1}_{4s_1^\sigma} + X^{1}_{s_1^\sigma}  - X^{1}_{s_j^\sigma}) \cdot \xi^\sigma_j } \right]  = e^{-(2t - 2s^\sigma_1) \| \tau(\pmb{\xi_p}) \|^2} e^{-\frac{1}{2} \sum_{j=1}^{p-1} (s^\sigma_j - s^\sigma_{j+1}) \| \xi^\sigma_{j+1} + \dots+ \xi^\sigma_p \|^2  } \notag  \\
&\qquad\qquad \leq e^{-\frac{1}{2}(s -s^\sigma_1) \| \tau(\pmb{\xi_p}) \|^2} e^{-\frac{1}{2} \sum_{j=1}^{p-1} (s^\sigma_j - s^\sigma_{j+1}) \| \xi^\sigma_{j+1} + \dots+ \xi^\sigma_p \|^2  } \notag \\
&\qquad\qquad   = \E\left[ e^{ -\i  \sum_{j=1}^p ( X^{1}_{s}  - X^{1}_{s_j^\sigma}) \cdot \xi^\sigma_j } \right] =  \E\left[ e^{ -\i  \sum_{j=1}^p ( X^{1}_{s}  - X^{1}_{s_j}) \cdot \xi_j } \right]  \notag  \\
&\qquad\qquad  = \exp\left( -\frac{1}{2} \Var  \sum_{j=1}^p ( X^{1}_{s}  - X^{1}_{s_j}) \cdot \xi_j  \right). \label{see3}
 \end{align}
 Therefore,  we can write 
 \begin{align*}
 \wh{T}_{p,\e}&\leq \frac{ (2\pi)^d }{(p!)^2}   \int_{[0,s]^{2p}}   d\pmb{s_p} d\pmb{r_p} \frac{\prod_{j=1}^p\gamma_0(s_j -r_j)  }{(s - s_1)^{\alpha} (s - r_1)^{\alpha}}   \int_{\R^{pd}} \mu_1(d\pmb{\xi_p}) G(\e,  \tau(\pmb{\xi_p}) )  \\
&  \quad  \times  \E\left[ e^{ -\i  \sum_{j=1}^p ( X^{1}_{s}  - X^{1}_{s_j}) \cdot \xi_j } \right]   \E\left[ e^{ -\i  \sum_{j=1}^p ( X^{1}_{s}  - X^{1}_{r_j}) \cdot \xi_j } \right]   {\bf 1}_{\{  s_1 > s_2\vee \dots\vee s_p  \}}  {\bf 1}_{\{  r_1 > r_2\vee \dots \vee r_p  \}}  \\
&\leq   \frac{ (2\pi)^d \Gamma_s^{p-1} }{(p!)^2}   \int_{[0,s]^{p+1}}dr_1 ds_1  \cdots ds_p  \frac{\gamma_0(s_1-r_1) } { (s-r_1)^{\alpha}(s-s_1)^{\alpha}} {\bf 1}_{\{  s_1 > s_2\vee \dots \vee s_p  \}} \\  &\qquad  \times \int_{\R^{pd}} \mu_1(d\pmb{\xi_p}) G(\e,  \tau(\pmb{\xi_p}) )      \exp\left( -   \frac 12 \Var  \sum_{j=1}^p ( X^{1}_{s}  - X^{1}_{s_j}) \cdot \xi_j  \right).  
 \end{align*}
 By the usual time change  $(r_1, s_j)\to (s-r_1, s-s_j)$, we have 
  \begin{align}
 \wh{T}_{p,\e}&\leq   \frac{ (2\pi)^d \Gamma_s^{p-1} }{(p!)^2}   \int_{[0,s]^{p+1}}dr_1 ds_1 \cdots  ds_p  \frac{\gamma_0(s_1-r_1) } { r_1^{\alpha} s_1^{\alpha}} {\bf 1}_{\{  s_1 < s_2\wedge  \dots \wedge s_p  \}}  \notag \\
   &\qquad  \times \int_{\R^{pd}} \mu_1(d\pmb{\xi_p}) G(\e,  \tau(\pmb{\xi_p}) )      \exp\left( - \frac 12 \Var  \sum_{j=1}^p  X^{1}_{s_j} \cdot \xi_j  \right). \notag 
 \end{align}
 Note that for  $s_1 < s_2\wedge \cdots \wedge s_p $
 \begin{align*}
  e^{ -\frac12   \Var  \sum_{j=1}^p  X^{1}_{s_j} \cdot \xi_j  } &= e^{- \frac 12s_1 \| \tau(\pmb{\xi_p})\|^2} e^{- \frac 12 \Var\sum_{j=2}^p (X^1_{s_j} - X^1_{s_1})\cdot \xi_j   } \\
  &= e^{-\frac 12s_1 \| \tau(\pmb{\xi_p})\|^2} e^{- \frac 12\Var\sum_{j=2}^p X^1_{s_j-s_1}\cdot \xi_j   }.
 \end{align*}
 Then,  by another time change $(s_j-s_1\to s_j)$ for $j\geq 2$, we can write
   \begin{align}
   \wh{T}_{p,\e}& \leq   \frac{ (2\pi)^d \Gamma_s^{p-1} }{(p!)^2}   \int_0^s  \int_0^s  dr_1 ds_1   \frac{\gamma_0(s_1-r_1) } { r_1^{\alpha} s_1^{\alpha}}  \int_{[0,s-s_1]^{p-1}} ds_2  \cdots ds_p    \notag  \\
   &\qquad\qquad  \times \int_{\R^{pd}} \mu_1(d\pmb{\xi_p}) G(\e,  \tau(\pmb{\xi_p}) )    e^{-\frac 12 s_1 \| \tau(\pmb{\xi_p})\|^2} e^{-\frac 12 \Var\sum_{j=2}^p X^1_{s_j}\cdot \xi_j   } \notag\\
   &\leq   \frac{ (2\pi)^d \Gamma_s^{p-1} }{(p!)^2} \left(  \int_0^s  \int_0^s  dr_1 ds_1   \frac{\gamma_0(s_1-r_1) } { r_1^{\alpha} s_1^{\alpha}}\right)       \notag\\
   &\quad   \times \int_{[0,s]^{p-1}} ds_2 \cdots ds_p  \int_{\R^{pd}} \mu_1(d\pmb{\xi_p}) G(\e,  \tau(\pmb{\xi_p}) )    e^{-\frac 12 \Var\sum_{j=2}^p X^1_{s_j}\cdot \xi_j   } \notag\\
   &= \frac{ (2\pi)^d \Gamma_s^{p-1} }{(p!)^2} \left(  \int_0^s  \int_0^s  dr_1 ds_1   \frac{\gamma_0(s_1-r_1) } { r_1^{\alpha} s_1^{\alpha}}\right)   (p-1)! \int_{\text{SIM}_{p-1}(s)} dw_2 \cdots  dw_p   \notag   \\
   &\quad   \times  \int_{\R^{pd}} \mu_1(d\pmb{\xi_p}) G(\e,  \tau(\pmb{\xi_p}) )    \exp\left(- \frac 12 \sum_{j=2}^p w_j \| \xi_2 + \cdots + \xi_j\|^2 \right). \label{see44}
 \end{align}
 Now making the change of variables $\eta_j = \xi_1+ \cdots + \xi_j$ yields
 \begin{align*}
 &\int_{\text{SIM}_{p-1}(s)} dw_2 \cdots dw_p \int_{\R^{pd}} \mu_1(d\pmb{\xi_p}) G(\e,  \tau(\pmb{\xi_p}) )    \exp\left(- \frac 12\sum_{j=2}^p w_j \| \xi_2 +  \cdots+ \xi_j\|^2 \right)\\
 &=\int_{\text{SIM}_{p-1}(s)} dw_2  \cdots dw_p \int_{\R^d} d\eta_p G(\e,  \eta_p ) \int_{\R^{pd-d}} d\pmb{\eta_{p-1}} \left( \varphi_1(\eta_1)  e^{-\frac 12 w_p \| \eta_p-\eta_1\|^2}    \right)  \\
  &\qquad \qquad\times \left( \varphi_1(\eta_2-\eta_1) \varphi_1(\eta_3-\eta_2) e^{-\frac 12 w_2\| \eta_2 - \eta_1\|^2}  \right) \left(   \varphi_1( \eta_4-\eta_3) e^{-\frac 12 w_3 \| \eta_3 - \eta_1  \|^2}  \right)\\
  &\qquad\qquad \times  \cdots \times \left(   \varphi_1( \eta_p-\eta_{p-1}) e^{-\frac 12 w_{p-1} \| \eta_{p-1} - \eta_1  \|^2}  \right).
 \end{align*}
 Moreover, we can apply \eqref{ecuu1} and \eqref{ecu1} to  the integral with respect to the variables  $d\eta_2, d\eta_3, \dots, d\eta_{p-1}, d\eta_1$ in order  to get
 \begin{align*}
 \int_{\R^d} d\eta_2 \varphi_1(\eta_2-\eta_1) \varphi_1(\eta_3-\eta_2) e^{-\frac 12 w_2\| \eta_2 - \eta_1\|^2} &\leq \int_{\R^d} \varphi_1(\xi)^2 e^{-\frac 12w_2\| \xi\|^2}d\xi     \\
 \int_{\R^d} d\eta_3  \varphi_1( \eta_4-\eta_3) e^{-\frac 12 w_3 \| \eta_3 - \eta_1  \|^2}  &\leq  \int_{\R^d} \varphi_1(\xi) e^{-\frac 12 w_3\| \xi\|^2}d\xi   \\
   \ldots & \ldots \\
   \int_{\R^d} d\eta_{p-1}  \varphi_1( \eta_p-\eta_{p-1}) e^{-\frac 12 w_{p-1} \| \eta_{p-1} - \eta_1  \|^2} & \leq \int_{\R^d} \varphi_1(\xi) e^{-\frac 12w_{p-1}\| \xi\|^2}d\xi   \\
      \int_{\R^d} d\eta_{1}  \varphi_1(\eta_1)  e^{-\frac 12w_p \| \eta_p-\eta_1\|^2}      & \leq \int_{\R^d} \varphi_1(\xi) e^{-\frac 12 w_p\| \xi\|^2}d\xi . 
 \end{align*}
 Thus, with $\Gamma_{s,\alpha} =    \int_0^s  \int_0^s  dr_1 ds_1    \gamma_0(s_1-r_1)  r_1^{-\alpha} s_1^{-\alpha} $, we have
 \begin{align*}
   \wh{T}_{p,\e}& \leq  \frac{ (2\pi)^d \Gamma_s^{p-1} \| \varphi_1\|_\infty \Gamma_{s,\alpha}}{p!p}  \int_{\text{SIM}_{p-1}(s)} dw_2  \cdots dw_p    \int_{\R^{pd-d}} \prod_{j=2}^p \varphi_1(\xi_j) e^{-\frac 12 w_j \| \xi_j\|^2}
    \\
   &\leq    \frac{ (2\pi)^d \Gamma_s^{p-1} \| \varphi_1\|_\infty \Gamma_{s,\alpha}}{p!p} \sum_{j=1}^{p-1} {p-1 \choose j} \frac{s^j}{j!} D_N^j (2C_N)^{p-1-j} \quad\text{by \eqref{fromh}}\\
   &\leq  \frac{ (2\pi)^d  \| \varphi_1\|_\infty \Gamma_{s,\alpha} \exp\Big( sD_N/(2C_N) \Big)}{p!p}    (4C_N\Gamma_s )^{p-1}.
 \end{align*}
Therefore, for $p\geq 2$,
 \begin{align*}
&\quad  \big\| J_{1,p,R} \big\|^2_{L^2(\Omega)} \\
& \leq  (t-s)^{2\alpha}    R^d \Big\{(2\pi)^d C_\alpha^2\omega_d \| \varphi_1\|_\infty \Gamma_{s,\alpha} \exp\big( sD_N/(2C_N) \big) \Big\}(4C_N\Gamma_s )^{p-1}.
 \end{align*}
 For $p=1$, it is   easier to get the desired bound. Indeed, from \eqref{EASee}, it follows  that 
 \begin{align*}
 \wh{T}_{1,\e} &= (2\pi)^d \int_0^s \int_0^s ds_1 dr_1 \gamma_0(s_1-r_1) (s-s_1)^{-\alpha}(s-r_1)^{-\alpha} \int_{\R^d} d\xi \varphi_1(\xi) G(\e, \xi) \\
 &\qquad\qquad\qquad \times \E\left[ e^{ -\i ( X^1_{4t} - X^1_{4s_1}) \cdot \xi } \right] \E\left[ e^{ -\i ( X^1_{4t} - X^1_{4r_1}) \cdot \xi } \right] \\
 &\leq   (2\pi)^d \| \varphi_1\|_\infty \Gamma_{s,\alpha},
 \end{align*}
 so that
 \[
  \big\| J_{1,1,R} \big\|^2_{L^2(\Omega)} \leq  (t-s)^{2\alpha}    R^d \Big\{(2\pi)^d C_\alpha^2\omega_d \| \varphi_1\|_\infty \Gamma_{s,\alpha}  \Big\}.
  \]
  Hence,
 \begin{align}
& \frac{1}{R^{d/2}}  \left\|   \sum_{p\geq 1} J_{1,p,R} \right\| _{L^k(\Omega)} \leq       \frac{1}{R^{d/2}}  \sum_{p\geq 1}(k-1)^{p/2}   \left\|  J_{1,p,R} \right\| _{L^2(\Omega)} \notag \\
 &\leq  (t-s)^\alpha  \Big\{ (2\pi)^d C_\alpha^2 \omega_d \|\varphi_1\|_\infty \big[1+ \exp( TD_NC_N^{-1}) \big] \Gamma_{s,\alpha} \Big\}^{1/2} \sum_{p\geq 0}  \Big[ 4(k-1) \Gamma_{T}C_N  \Big]^{p/2} \notag \\
 &= (t-s)^\alpha  \frac{ \Big\{ (2\pi)^d C_\alpha^2 \omega_d \|\varphi_1\|_\infty \big[1+ \exp( TD_NC_N^{-1}) \big] \Gamma_{s,\alpha} \Big\}^{1/2} }{1 - 2\sqrt{(k-1) \Gamma_T C_N   }}   , \label{ccomb2}
 \end{align}
provided  $0< 4(k-1) \Gamma_T C_N < 1$, which is always valid for some  $N>0$. 

\smallskip
 
     Combing \eqref{ccomb1} and \eqref{ccomb2}, we get \eqref{G0} and hence the desired tightness. \hfill $\square$

\subsection{Proof of Theorem \ref{Nchaotic}} \label{sec310a}
 We are going to show that, under the hypotheses of Theorem   \ref{Nchaotic}, the first chaos dominates
 and, as a consequence, the proof of the central limit theorem reduces to the computation of the limit variance of the first chaos. The proof will be done in several steps.
 
 \medskip
 \noindent
 {\it Step 1}.  
 We have shown in the proof of Theorem \ref{thmSHE} that, if  $\gamma_0$ is locally integrable, $\gamma_1$ is integrable and Dalang's condition \eqref{Da} is satisfied, then  for any integer $p\geq 2$,  
   \begin{align}
   \Var\Big ( \Pi_p A_t(R) \Big) \sim \sigma_p(t,t) R^d ~\text{as $R\to+\infty$ and }~ \sum_{p\geq 2} \sigma_{p}(t,t) < \infty. \label{SAMEr}
   \end{align}
 The above results also hold true, provided    $\gamma_0$ is locally integrable and the modified version of Dalang's condition \eqref{mDc2} is satisfied. To see the latter point, it is enough to proceed with the same arguments  but replacing the estimate  \eqref{KU2} by
 \[
\int_{\R^d}  \varphi_1(\eta_1)  \varphi_1(\eta_2 - \eta_1) h_1(\eta_1)  d\eta_1   \leq    \int_{\R^d} \varphi_1(\eta_1)^2 h_1(\eta_1)d\eta_1,
\] 
obtained by applying (\ref{ecu1}). 
 Then, we can use the same arguments as in  the proof of \cite[Lemma 3.3]{HHNT15}, with $C_N, D_N$ replaced by 
 \[
 C_N' =\int_{\{  \| \xi \| \geq N \}} \frac{  \varphi_1(\xi)+ \varphi_1(\xi)^2   }{\|\xi\|^2 } d\xi \quad{\rm and } \quad D_N' = \int_{\{  \| \xi \| \leq N \}} \big( \varphi_1(\xi)+ \varphi_1(\xi)^2  \big)d\xi.
 \]
 In this way, instead of the inequality \eqref{fromh}, we can get 
 \begin{align}\label{new35}
 Q_p(\eta_p) \leq t \sum_{j=0}^{p-1} {p-1 \choose j} \frac{t^j}{j!} (D_N')^j (2C_N')^{p-1-j}
 \end{align}
 and by choosing large $N$ such that $0< 4\Gamma_t C_N' < 1$, we can get  instead  of \eqref{stp1}
 \begin{align}\label{new36}
 \int_{\R^d}\E\big[ \beta_{s,t}(z)^p\big] dz  \leq   (2\pi)^d \Gamma_t^p p! t (4C'_N)^{p-1}\exp\Big( \frac{tD'_N}{2C'_N} \Big)< \infty
 \end{align}
  and as a result,
 \[
 \sum_{p\geq 2} \frac{1}{p!} \int_{\R^d} \E\big[ \beta_{t,t}(z)^p \big] dz < +\infty,
 \]
which is equivalent to   \eqref{SAMEr}.

\medskip
\noindent
{\it Step 2.}    
    For the first chaotic component,     if   $\gamma_1\notin L^1(\R^d)$, then 
      \[
    R^{-d} \Var\Big( \Pi_1 A_t(R) \Big) \to  \infty ~\text{as $R\to+\infty$.}
   \]
 This observation, together with Step 1,  justifies part (1)  of Theorem \ref{Nchaotic}.

\medskip
\noindent
{\it Step 3.} 
    When $\gamma_1(z) = \| z \|^{-\beta}$ for some $\beta\in(0, 2\wedge d)$, let us first compute the variance of $\Pi_1 A_t(R)$.  We have
\begin{align}
 \Var\big( \Pi_1 A_t(R) \big)  &=  \int_0^t\int_0^t du dv \gamma_0(u-v)  \notag \\
 &\qquad \times \int_{\R^d} d\xi \int_{B_R^2 }dx dy e^{-\i (x-y)\cdot \xi} c_{d,\beta} \| \xi\|^{\beta- d} e^{-\frac{1}{2} (u+v) \| \xi\|^2}, \notag
\end{align}
for some constant $c_{d,\beta}$.
Then by making change of variables $(x , y,\xi) \to (Rx, Ry, \xi/R)$, we get
\begin{align}
&  \Var\big( \Pi_1 A_t(R) \big) R^{-2d+\beta}\label{changesame}    \\
&= \int_0^t\int_0^t du dv \gamma_0(u-v) \int_{\R^d} d\xi \left[ \int_{B_1^2 }dx dy e^{-\i (x-y)\cdot \xi}\right] c_{d,\beta} \| \xi\|^{\beta- d} e^{-\frac{1}{2R^2} (u+v) \| \xi\|^2}. \notag
\end{align}
This expression is increasing in $R$ and it converges, as $R\rightarrow +\infty$, to
 \[
   \int_0^t\int_0^t du dv \gamma_0(u-v) \int_{\R^d} d\xi  \int_{B_1^2 }dx dy e^{-\i (x-y)\cdot \xi}  \varphi_1(\xi)  = \kappa_\beta \in (0,\infty).
\]
Then, it suffices to show that 
\[
\sum_{p\geq 2} {\rm Var}\big( \Pi_p A_t(R) \big) = o(R^{2d-\beta}),
\]
which implies the central limit theorem \eqref{Nchaotic2}  immediately.  For $p\geq 2$,   we read from \eqref{DTW}, \eqref{FK} and \eqref{acexp} that
\begin{align*}
 \Var\big( \Pi_p A_t(R) \big) & =  \frac{c_{d,\beta}^p}{p!} \int_{B_R^2}dxdy \int_{[0,t]^{2p}} d\pmb{s_p} d\pmb{r_p} \prod_{j=1}^p \gamma_0(s_j-r_j)\int_{\R^{pd}} d\pmb{\xi_p} \left( \prod_{j=1}^p \| \xi_j\|^{\beta -d}\right)  \\
  &\qquad \times e^{-\i (x-y) \cdot \tau(\pmb{\xi_p} )}  e^{-\frac{1}{2} \Var \sum_{j=1}^p \xi_j \cdot  X_{s_j}^1 } e^{-\frac{1}{2} \Var \sum_{j=1}^p \xi_j \cdot  X_{r_j}^2 }.
  \end{align*}
 Note that $$\int_{B_R^2}dxdy  e^{-\i (x-y) \cdot \tau(\pmb{\xi_p} )} = (2\pi R)^d\omega_d \ell_R\big(  \tau(\pmb{\xi_p} )\big)\geq 0  .$$ Then by   similar arguments as before, we obtain
 \begin{align*}
 \Var\big( \Pi_p A_t(R) \big) & \leq  \frac{c_{d,\beta}^p}{p!} \int_{B_R^2}dxdy \int_{[0,t]^{2p}} d\pmb{s_p} d\pmb{r_p} \prod_{j=1}^p \gamma_0(s_j-r_j)\int_{\R^{pd}} d\pmb{\xi_p} \left( \prod_{j=1}^p \| \xi_j\|^{\beta -d}\right)  \\
  &\qquad\qquad\qquad \times e^{-\i (x-y) \cdot \tau(\pmb{\xi_p} )}  \exp\left( - \frac 12\Var \sum_{j=1}^p \xi_j \cdot  X_{s_j}^1 \right) \\
    &\le c_{d,\beta}^p \Gamma_t^p \int_{B_R^2}dxdy \int_{\text{SIM}_p(t)} d\pmb{w_p} \int_{\R^{pd}} d\pmb{\xi_p} \left( \prod_{j=1}^p \| \xi_j\|^{\beta -d}\right)e^{-\i (x-y) \cdot \tau(\pmb{\xi_p} )}  \\
  &\qquad\qquad\qquad \times    \exp\left( -\frac 12 \sum_{j=1}^p w_j \| \xi_1  +\cdots + \xi_j \|^2 \right).
  \end{align*}
By the usual change of variables $\eta_j = \xi_1  +\cdots + \xi_j$, with $\eta_0=0$, and $(x,y,\eta_p)\to (Rx, Ry, \eta_p/R)$, we obtain 
 \begin{align}
& \Var\big( \Pi_p A_t(R) \big)  \leq  c_{d,\beta}^{p-1} \Gamma_t^p R^d  \int_{\text{SIM}_p(t)} d\pmb{w_p} \int_{\R^{pd-d}} d\pmb{\eta_{p-1}} \left( \prod_{j=1}^{p-1} \| \eta_j - \eta_{j-1}\|^{\beta -d} e^{-\frac 12w_j \| \eta_j \|^2}\right)  \notag   \\
  &\qquad\qquad  \times \int_{\R^d}d\eta_p   \|   \eta_p R^{-1} - \eta_{p-1} \|^{\beta-d}  \int_{B_1^2}dxdy  e^{-\i (x-y) \cdot   \eta_p}  e^{-w_p \| \eta_p \|^2/(2R^2)}. \label{displ1}
  \end{align}
Let us first analyze the part in the display \eqref{displ1}, which can be rewritten as 
\begin{align}
& R^{d-\beta}  \int_{\R^d}d\eta_p   \|    \eta_p - R\eta_{p-1} \|^{\beta-d}  \int_{B_1^2}dxdy  e^{-\i (x-y) \cdot   \eta_p}  e^{-w_p \| \eta_p \|^2/(2R^2)} \notag \\
&\leq R^{d-\beta} \int_{B_1^2}dxdy   \int_{\R^d}d\eta_p   \|    \eta_p - R\eta_{p-1} \|^{\beta-d}  e^{-\i (x-y) \cdot   \eta_p} \notag  \\
&=c_{d,\beta} ^{-1} R^{d-\beta} \int_{B_1^2}dxdy  e^{-\i (x-y)\cdot\eta_{p-1}R} \| x- y \|^{-\beta}  =: R^{d-\beta} U_{R}(\eta_{p-1}). \label{sei}
\end{align}
The function $U_R$ defined above is uniformly bounded by  $c_{d,\beta}^{-1} \int_{B_1^2}dxdy   \| x- y \|^{-\beta}$ and  for $\eta_{p-1}\neq 0$, by the Riemann-Lebesgue's Lemma, $0\leq U_R(\eta_{p-1}) $ converges to zero as $R\to+ \infty$.  As a result, 
\begin{align}
R^{-2d+\beta} \sum_{p\geq 2}  \Var\big( \Pi_p A_t(R) \big) & \leq   \sum_{p\geq 2} t\Gamma_t^p  c_{d,\beta}^p \int_{\text{SIM}_{p-1}(t)} d\pmb{w_{p-1}} \int_{\R^{pd-d}} d\pmb{\eta_{p-1}} \notag  \\
&\qquad   \times \left( \prod_{j=1}^{p-1} \| \eta_j - \eta_{j-1} \|^{\beta-d}  e^{- \frac 12w_j \| \eta_j \|^2}\right) U_R(\eta_{p-1}) \notag \\
&\leq  t \left(\int_{B_1^2}dxdy   \| x- y \|^{-\beta}\right)   \sum_{p\geq 2} \Gamma_t^p  \int_{\text{SIM}_{p-1}(t)} d\pmb{w_{p-1}} \notag  \\
&\quad   \times \int_{\R^{pd-d}} d\pmb{\eta_{p-1}} \left( \prod_{j=1}^{p-1} \varphi_1(\eta_j - \eta_{j-1}) e^{-\frac 12w_j \| \eta_j \|^2}\right). \notag
\end{align}
By using \eqref{ecuu1} for the integration with respect to $d\eta_{p-1},\dots,  d\eta_3, d\eta_2$ inductively, we get 
\begin{align*}
 &\quad \sum_{p\geq 2} \Gamma_t^p  \int_{\text{SIM}_{p-1}(t)} d\pmb{w_{p-1}}   \int_{\R^{pd-d}} d\pmb{\eta_{p-1}} \left( \prod_{j=1}^{p-1} \varphi_1(\eta_j - \eta_{j-1}) e^{-\frac 12 w_j \| \eta_j \|^2}\right)\\
 &\leq \sum_{p\geq 2} \Gamma_t^p  \int_{\text{SIM}_{p-1}(t)} d\pmb{w_{p-1}}   \int_{\R^{pd-d}} d\pmb{\eta_{p-1}} \left( \prod_{j=1}^{p-1} \varphi_1(\eta_j ) e^{-\frac 12 w_j \| \eta_j \|^2}\right), 
\end{align*}
which is a convergent series by previous discussion. Then by  dominated convergence and the  Riemann-Lebesgue's lemma, we have
\[
 \sum_{p\geq 2}  \Var\big( \Pi_p A_t(R) \big) =o(R^{2d-\beta}).
 \]
This tells us that the first chaos is indeed dominant and we have the desired Gaussian fluctuation \eqref{Nchaotic2}. This concludes the proof of Theorem \ref{Nchaotic}. \hfill $\square$

\medskip

\subsection{Proof of Theorem \ref{NCF}} Part (1): The proof of  the functional CLT for $\wh{A}_t(R)$ can be done exactly by  the same arguments from Sections 3.1, 3.2 and 3.3 except for using \eqref{new35} and \eqref{new36} instead of \eqref{fromh} and \eqref{stp1}. So we leave the details for interested readers and refer to the forthcoming work \cite{NSZ20} for similar situation when dealing with parabolic Anderson model driven by rough noise. 
 
 \medskip
 
Part (2): By results in part (2) of Theorem \ref{Nchaotic}, $R^{-d+\frac{\beta}{2}} \wh{A}_t(R)$ converges to the zero process in finite-dimensional distributions. So our proof consists in two parts: 
\begin{itemize}
\item[(i)] We prove $\Big\{  R^{-d+\frac{\beta}{2}} \Pi_1\big( A_t(R)\big) : t\in\R_+\Big\}  \xrightarrow[\rm law]{R\to\infty} \wt{\mathcal{G}}.$

\item[(ii)] We prove $\big\{ R^{-d + \frac{\beta}{2}}\wh{A}_t(R): t\geq0 \big\}$ converges in law (hence in  probability) to the zero process, as $R\to\infty$. This will follow from the tightness of   $\big\{ R^{-d + \frac{\beta}{2}}\wh{A}_\bullet(R): R>0 \big\}$.
 \end{itemize}
\noindent{\it Proof of \rm (i):}    It is clear that $R^{-d + \frac{\beta}{2}} \Pi_1\big( A_t(R)\big) =R^{-d + \frac{\beta}{2}}  \int_0^t \int_{\R^d} G_{t-r}(x-z) W(dr,dz)$, $t\in\R_+$ is a centered Gaussian process with
 \begin{align*}
&\quad  R^{-2d+\beta}\E\big[ \Pi_1\big( A_t(R)\big) \Pi_1\big( A_s(R)\big)  \big] \\
&=\int_0^t\int_0^s dudv\gamma_0(u-v) \int_{\R^d} d\xi \left[ \int_{B_1^2} dxdy e^{-\i(x-y)\cdot\xi} \right] c_{d,\beta}\|\xi\|^{\beta-d} e^{-\frac{(t-u+s-v)}{2R^2}\|\xi\|^2}
 \end{align*}
  by the same change of variables as in \eqref{changesame}. By monotone convergence, we have
\[
R^{-2d+\beta}\E\big[ \Pi_1\big( A_t(R)\big) \Pi_1\big( A_s(R)\big)  \big]\xrightarrow{R\to\infty} \int_0^t\int_0^s dudv \gamma_0(u-v)   \int_{B_1^2} dxdy \|x-y\|^{-\beta}.
\]
This implies easily the convergence in finite-dimensional distributions.  As in section 3.3, we let $s<t$ and write 
\[
 \Pi_1\big( A_t(R)\big)-  \Pi_1\big( A_s(R)\big)  = J_{1,1,R} + J_{2,1,R}
\]
with $J_{1,1,R} := \int_0^s\int_{\R^d} \left( \int_{B_R }d_1(s,t,x; s_1, y_1)dx\right) W(ds_1, dy_1)$ and 
\[
J_{2,1,R} := \int_0^t\int_{\R^d} \left( \int_{B_R }d_2(s,t,x; s_1, y_1)dx\right) W(ds_1, dy_1),
\]
where $d_1, d_2$ are introduced in \eqref{DD1}, \eqref{DD2} and
\[
\big\vert d_1(s,t,x; s_1, y_1)   \big\vert  \leq C (t-s)^{\alpha} (s-s_1)^{-\alpha} G(4t - 4s_1, x-y_1){\bf 1}_{[0,s)}(s_1).
\]
As before, we can write
\begin{align*}
&\big\|J_{1,1,R}\big\|_{L^2(\Omega)}^2 = \int_0^s\int_0^s ds_1ds_2 \gamma_0(s_1-s_2) \int_{\R^{2d}} dy_1dy_2 \| y_1- y_2\|^{-\beta}\int_{B_R^2}dx_1dx_2  \\
&\qquad \times d_1(s,t,x_1; s_1, y_1) d_1(s,t,x_2; s_2, y_2) \\
&\leq C(t-s)^{2\alpha} \int_0^s\int_0^s ds_1ds_2 \gamma_0(s_1-s_2)  (s-s_1)^{-\alpha} (s-s_2)^{-\alpha} \int_{\R^{2d}} dy_1dy_2 \| y_1- y_2\|^{-\beta} \\
&\qquad \times \int_{B_R^2}dx_1dx_2G(4t - 4s_1, x_1-y_1) G(4t - 4s_2, x_2-y_2) \\
&= C(t-s)^{2\alpha} \int_0^s\int_0^s ds_1ds_2 \gamma_0(s_1-s_2)  (s-s_1)^{-\alpha} (s-s_2)^{-\alpha} \int_{\R^{d}} d\xi c_{d,\beta} \| \xi\|^{\beta-d} \\
&\qquad\qquad \times \int_{B_R^2}dx_1dx_2 e^{-\i (x_1-x_2)\cdot \xi}  e^{ -(2t-2s_1+2t-2s_2)\|\xi\|^2   }    \\
&\leq C(t-s)^{2\alpha} \int_0^s\int_0^s ds_1ds_2 \gamma_0(s_1-s_2)  s_1^{-\alpha} s_2^{-\alpha} \int_{\R^{d}} d\xi c_{d,\beta} \| \xi\|^{\beta-d} \\
&\qquad\qquad \times \int_{B_R^2}dx_1dx_2 e^{-\i (x_1-x_2)\cdot \xi}.
\end{align*}
 Making the change of variables $(x_1,x_2, \xi )\to (Rx_1, Rx_2, \xi/R)$ yields
\begin{align*}
&\big\|J_{1,1,R}\big\|_{L^2(\Omega)}^2 \leq C(t-s)^{2\alpha} R^{2d-\beta} \int_0^s\int_0^s ds_1ds_2 \gamma_0(s_1-s_2)  s_1^{-\alpha} s_2^{-\alpha} \int_{\R^{d}} d\xi c_{d,\beta} \| \xi\|^{\beta-d} \\
&\qquad\qquad \times \left(\int_{B_1^2}dx_1dx_2 e^{-\i (x_1-x_2)\cdot \xi}\right)  = C(t-s)^{2\alpha} R^{2d-\beta} \Gamma_{s,\alpha} \int_{B_1^2}dxdy\|x-y\|^{-\beta},
\end{align*}
where $\Gamma_{s,\alpha}$ is given as in \eqref{GAMMASA}. Now let us estimate $\big\|J_{2,1,R}\big\|_{L^2(\Omega)}^2$:
\begin{align*}
&\big\|J_{2,1,R}\big\|_{L^2(\Omega)}^2 = \int_s^t\int_s^t ds_1ds_2 \gamma_0(s_1-s_2) \int_{\R^{2d}} dy_1dy_2 \| y_1- y_2\|^{-\beta}\int_{B_R^2}dx_1dx_2  \\
&\qquad \times G(t-s_1, x_1-y_1)  G(t-s_2, x_2-y_2)  \\
&= \int_s^t\int_s^t ds_1ds_2 \gamma_0(s_1-s_2) \int_{\R^{d}} d\xi c_{d,\beta} \| \xi\|^{\beta-d}\int_{B_R^2}dx_1dx_2 e^{-\i (x_1-x_2)\cdot\xi} e^{-\frac{(2t-s_1-s_2)}{2}\|\xi\|^2  }  \\
&\leq R^{2d-\beta}  \int_s^t\int_s^t ds_1ds_2 \gamma_0(s_1-s_2) \int_{\R^{d}} d\xi c_{d,\beta} \| \xi\|^{\beta-d}\int_{B_1^2}dx_1dx_2 e^{-\i (x_1-x_2)\cdot\xi}  \\
&\leq R^{2d-\beta} (t-s)  \left(\int_{B_1^2}dxdy\|x-y\|^{-\beta}\right) \left( \int_{-t}^t \gamma_0(s_1)ds_1\right).
\end{align*}
Hence given  $T\in(0,\infty)$, we have for  any $0<s<t\leq T$ and for any $k\in[2,\infty)$,
\[
\big\| \Pi_1\big( A_t(R)\big)-  \Pi_1\big( A_s(R)\big)  \big\|_{L^k(\Omega)} = c_k \big\| \Pi_1\big( A_t(R)\big)-  \Pi_1\big( A_s(R)\big)  \big\|_{L^2(\Omega)} \leq C (t-s)^\alpha,
\]
 where $c_k$ is the $L^k(\Omega)$-norm of $Z\sim N(0,1)$ and the constant $C$ does not depend on  $R$, $s$ or $t$. This gives us the desired tightness and hence leads to the functional CLT for $\big\{\Pi_1\big(A_t(R)\big): t\in\R_+\big\}$. 
 
 \medskip
 
 \noindent{\it Proof of \rm (ii):} Given $T\in(0,\infty)$, we consider any $0< s<t \leq T$ and as before, we write
 \[
 \Pi_p(A_t(R)) -  \Pi_p(A_s(R)) = J_{1,p,R}+ J_{2,p,R}.
 \]
 Then following the   arguments that led to \eqref{displ1}, we have 
 \begin{align*}
 &\big\| J_{2,p,R}\big\|_{L^2(\Omega)}^2   \leq  C^p R^d  \int_{\text{SIM}_p(t-s)} d\pmb{w_p} \int_{\R^{pd-d}} d\pmb{\eta_{p-1}} \left( \prod_{j=1}^{p-1} \| \eta_j - \eta_{j-1}\|^{\beta -d} e^{-\frac{w_j \| \eta_j \|^2}{2}}\right)  \notag   \\
  &\qquad\qquad  \times \int_{\R^d}d\eta_p   \|   \eta_p R^{-1} - \eta_{p-1} \|^{\beta-d}  \int_{B_1^2}dxdy  e^{-\i (x-y) \cdot   \eta_p}  e^{-w_p \| \eta_p \|^2/(2R^2)} \\
  &  \leq C^p R^{2d-\beta} \int_{\text{SIM}_p(t-s)} d\pmb{w_p} \int_{\R^{pd-d}} d\pmb{\eta_{p-1}} \left( \prod_{j=1}^{p-1} \| \eta_j - \eta_{j-1}\|^{\beta -d} e^{-\frac{w_j \| \eta_j \|^2}{2}}\right), ~\text{see \eqref{sei}}\\
  &\leq C^p R^{2d-\beta} (t-s) \int_{\text{SIM}_{p-1}(t-s)} d\pmb{w_{p-1}} \int_{\R^{pd-d}} d\pmb{\eta_{p-1}}  \prod_{j=1}^{p-1} \| \eta_j - \eta_{j-1}\|^{\beta -d} e^{-\frac{w_j \| \eta_j \|^2}{2}}.
  \end{align*}
   By using \eqref{ecuu1} under the Dalang's condition, we have
 \[
 \int_{\R^{pd-d}} d\pmb{\eta_{p-1}}  \prod_{j=1}^{p-1} \| \eta_j - \eta_{j-1}\|^{\beta -d} e^{-\frac{w_j \| \eta_j \|^2}{2}}\leq  \prod_{j=1}^{p-1}  \int_{\R^{d}} d\eta_j   \| \eta_j \|^{\beta -d} e^{-\frac{w_j \| \eta_j \|^2}{2}}
 \]
 so that by the same application of Lemma 3.3 in \cite{HHNT15} as in \eqref{fromh}, we deduce 
  \begin{align*}
 \big\| J_{2,p,R}\big\|_{L^2(\Omega)}^2     \leq C R^{2d-\beta} (t-s) (4C_N)^{p-1}.
    \end{align*}
    where $C_N > 0$ can be chosen arbitrarily small for large enough $N$, see  \eqref{CNN}.

 Now let us estimate $ \big\| J_{1,p,R}\big\|_{L^2(\Omega)}^2  $: Following the arguments around \eqref{see1}, \eqref{see11}, \eqref{see2}, \eqref{see3} and \eqref{see44}, we can write  
   \begin{align*}
 \big\| J_{1,p,R}\big\|_{L^2(\Omega)}^2   &  \leq C   (t-s)^{2\alpha} \frac{1}{p!} \int_{B_R^2}dxdy \sum_{\sigma,\pi\in\mathfrak{S}_p} \int_{\Delta_p(s)^2} d\pmb{s^\sigma_p} d\pmb{r^\pi_p} \frac{\prod_{j=1}^p\gamma_0(s_j -r_j)  }{(s - s_1^\sigma)^{\alpha} (s - r_1^\pi)^{\alpha}}   \notag  \\
&\quad\times \int_{\R^{pd}} \mu_1(d\pmb{\xi_p}) e^{-\i (x-y)\cdot \tau(\pmb{\xi_p})}   \exp\left(-\frac{1}{2} \Var \sum_{j=1}^p (X_s^1 - X_{s_j}^1)\cdot\xi_j\right),
\intertext{since $ \int_{B_R^2}dxdye^{-\i (x-y)\cdot \tau(\pmb{\xi_p})} $ is nonnegative;  }
&\leq \frac{ C   (t-s)^{2\alpha}  \Gamma_{s,\alpha} \Gamma_s^{p-1}  }{p}   \int_{B_R^2}dxdy \int_{\text{SIM}_{p-1}(s)}dw_2\cdots dw_p \\
&\quad\times \int_{\R^{pd}} \mu_1(d\pmb{\xi_p}) e^{-\i (x-y)\cdot \tau(\pmb{\xi_p})} \exp\left( -\frac{1}{2} \sum_{j=2}^p w_j \| \xi_2 + \cdots+\xi_j \|^2  \right).
    \end{align*}
 Then by the usual change of variables $\eta_j = \xi_1 + \cdots + \xi_j$ and $(x,y,\eta_p)\to (Rx,Ry, \frac{\eta_p}{R})$, we have
 \begin{align*}
 &\quad  \int_{B_R^2}dxdy    \int_{\R^{pd}} \mu_1(d\pmb{\xi_p}) e^{-\i (x-y)\cdot \tau(\pmb{\xi_p})} \exp\left( -\frac{1}{2} \sum_{j=2}^p w_j \| \xi_2 + \cdots+\xi_j \|^2  \right)\\
 &= \int_{B_R^2}dxdy    \int_{\R^{pd}}d\pmb{\eta_p} \| \eta_p - \eta_{p-1} \|^{\beta-d} e^{-\i (x-y)\cdot \eta_p} e^{-\frac{1}{2}\sum_{j=2}^p w_j \| \eta_j - \eta_1\|^2}    \prod_{j=1}^{p-1} \| \eta_j - \eta_{j-1} \|^{\beta-d}\\
 &=R^{2d-\beta}  \int_{\R^{pd -d}}d\pmb{\eta_{p-1}} e^{-\frac{1}{2}\sum_{j=2}^{p-1} w_j \| \eta_j - \eta_1\|^2} \left( \prod_{j=1}^{p-1} \| \eta_j - \eta_{j-1} \|^{\beta-d}  \right)   \\
 &\qquad\qquad \times \left(\int_{B_1^2}dxdy  \int_{\R^d} d\eta_p  \| \eta_p - R\eta_{p-1} \|^{\beta-d} e^{-\i (x-y)\cdot \eta_p}   e^{-\frac{w_p}{2} \| \eta_p R^{-1} - \eta_{p-1} \|^2} \right) \\
 &\leq \frac{\int_{B_1^2} dxdy \|x-y\|^{-\beta} }{c_{d,\beta} }  R^{2d-\beta}  \int_{\R^{pd -d}}d\pmb{\eta_{p-1}} e^{-\frac{1}{2}\sum_{j=2}^{p-1} w_j \| \eta_j - \eta_1\|^2}  \prod_{j=1}^{p-1} \| \eta_j - \eta_{j-1} \|^{\beta-d}   \\
 &\leq C R^{2d-\beta} \prod_{j=1}^{p-1}  \int_{\R^{d}}d\eta_j e^{-\frac{1}{2} w_j \| \eta_j\|^2}   \| \eta_j   \|^{\beta-d}  
 \end{align*}
 where the last inequality is a consequence of \eqref{ecuu1}. So an application of Lemma 3.3 from \cite{HHNT15} yields
 \[
  \big\| J_{1,p,R}\big\|_{L^2(\Omega)}^2  \leq C (t-s)^{2\alpha} (4C_N \Gamma_s)^{p-1}.
  \]
    Therefore, for large enough $N$, we deduce from the hypercontractivity property that for any $k\in[2,\infty)$
    \begin{align*}
&\quad  \big\| \wh{A}_t(R)   -\wh{A}_s(R) \big\|_{L^k(\Omega)} \leq \sum_{p\geq 2} (k-1)^{p/2} \Big(  \big\| J_{1,p,R}\big\|_{L^2(\Omega)} + \big\| J_{2,p,R}\big\|_{L^2(\Omega)} \Big) \\
& \leq C(t-s)^{\alpha} R^{2d-\beta}\sum_{p\geq 2}  \Big( \big[ 4(k-1) C_N\Gamma_s\big]^{p/2} +  \big[ 4(k-1) C_N\big]^{p/2} \Big) \leq C(t-s)^{\alpha} R^{2d-\beta}.
    \end{align*}
  This proves (ii), and hence concludes our proof.    \hfill $\square$

 \section{Proof of technical results  }\label{tech}

\begin{proof}[Proof of Proposition \ref{Maru}]
Recall the definition of $\Psi_p$, which is defined a.e. by the following change of variables:
 \[
 \int_{\R^{pd}} \| \tau(\pmb{\xi_p}) \|^{-d} J_{d/2}(R \| \tau(\pmb{\xi_p})  \|)^2 \vert \F f_p\vert^2(\pmb{\xi_p}) \mu(d\pmb{\xi_p}) = \int_{\R^d} dx \| x\|^{-d} J_{d/2}(R \| x \|)^2 \Psi_p(x)
 \]
 with $\Psi_p(x)$ almost everywhere equal to
 \[
 \int_{\R^{pd-d}}  \vert \F f_p\vert^2(\pmb{\xi_{p-1}}, x - \tau(\pmb{\xi_{p-1}})  ) \varphi(x -\tau(\pmb{\xi_{p-1}})) \prod_{j=1}^{p-1} \varphi(\xi_j) d\pmb{\xi_{p-1}} \,.\]
 We write 
 \[
\sigma_{p,R}^2 R^{-d} = \omega_d p! (2\pi)^d \int_{\R^d} \ell_R(x) \Psi_p(x) dx \geq  \omega_d p! (2\pi)^d \int_{\{  \|x\|\leq R^{-1}\}} R^{d}\ell_1(Rx) \Psi_p(x) dx 
 \]
 and for $y=Rx\in B_1$, we have 
 \begin{align}\label{sharpe}
 (2\pi)^d\omega_d \ell_1(y) = \left(  \int_{B_1} e^{-\i y \cdot u} du \right)^2 = \left(  \int_{B_1} \cos( y \cdot u) du \right)^2 \in\big[ \cos(1)^2 \omega^2_d, \omega^2_d \big]\,.
 \end{align}
As a consequence,
\begin{align*}
\sigma_{p,R}^2 R^{-d}&\geq  p! \omega^2_d  \cos(1)^2 R^d \int_{\|x\|\leq R^{-1}}  \Psi_p(x) dx \\
&= p! \omega^2_d  \cos(1)^2 R^d  \int_{  \{    \| \tau(\pmb{\xi_p} ) \| \leq R^{-1} \} }  \vert  \F f_p\vert^2(\pmb{\xi_p})   \mu(d\pmb{\xi_p}) =p! \omega^2_d  \cos(1)^2 R^d  \wh{\Psi}_p(R^{-1}).
\end{align*}
This gives us 
\[
\liminf_{R\to+\infty}  \sigma_{p,R}^2 R^{-d} \geq \omega_d \cos(1)^2p! \liminf_{R\to+\infty}  R^{d} \wh{ \Psi}_p(R^{-1}) > 0 \,.
\]

 For the upper bound, we proceed as follows: 
\begin{align*}
 \sigma_{p,R}^2  R^{-d} &= \omega_d p! (2\pi)^d \int_{\R^d} \ell_R(x) \Psi_p(x) dx \\
&=  \omega_d p! (2\pi)^d \int_{\|x\|\leq R^{-1}} R^{d}\ell_1(Rx) \Psi_p(x) dx + \omega_d p! (2\pi)^d \int_{\|x\|> R^{-1}}  \ell_R(x) \Psi_p(x) dx \,.
\end{align*}
It follows from \eqref{sharpe} that 
\[
 (2\pi)^d \int_{\|x\|\leq R^{-1}} R^{d}\ell_1(Rx) \Psi_p(x) dx  \leq  \omega_d  R^d \int_{\|x\|\leq R^{-1}} \Psi_p(x) dx = \omega_d R^d \wh{\Psi}_p(R^{-1}) \,.
 \]
 By Lemma \ref{Bessel},  there exists some absolute constant $C$ such that $\ell_R(x)\leq  C (R/n)^d n^{-1}$ for $n \leq R\| x \| < n+1$. Therefore,
 \begin{align*}
&\quad  \int_{\|x\|> R^{-1}}  \ell_R(x) \Psi_p(x) dx =  \sum_{n=1}^\infty \int_{nR^{-1} \leq \|x\| < (n+1)R^{-1}}  \ell_R(x) \Psi_p(x) dx\\
 &\leq C \sum_{n=1}^\infty \int_{nR^{-1} \leq \|x\| < (n+1)R^{-1}} (R/n)^d n^{-1} \Psi_p(x) dx  \\
 &= CR^d \sum_{n=1}^\infty n^{-d-1} \Big( \wh{\Psi}_p(\frac{n+1}{R}) -\wh{\Psi}_p(\frac{n}{R}) \Big) \\
 &= C R^d\sum_{n=2}^\infty  \wh{\Psi}_p(n/R) \big[ (n-1)^{-d-1} - n^{-d-1} \big] \leq C R^d\sum_{n=2}^\infty  \wh{\Psi}_p(n/R) n^{-1} (n-1)^{-d-1}\\
 &=  C R^d\sum_{2\leq n \leq R^\delta + 1}  \wh{\Psi}_p(n/R) n^{-1} (n-1)^{-d-1} + C R^d\sum_{ n > R^\delta + 1}  \wh{\Psi}_p(n/R) n^{-1} (n-1)^{-d-1},
  \end{align*}
  where $\delta = d/(d+1)$.
  This implies
  \begin{align*}
  \int_{\|x\|> R^{-1}}  \ell_R(x) \Psi_p(x) dx
 &\leq C \left( \sup_{h\leq R^{-1} + R^{\delta-1}} \frac{ \wh{\Psi}_p (h) }{h^{d}} \right) \left(\sum_{2\leq n \leq R^\delta + 1} \frac{n^{d-1}}{(n-1)^{d+1}} \right) \\
 & \qquad + C\wh{\Psi}_p(\infty) \sum_{ n > R^\delta + 1}  \frac{  n^{-1} R^d}{(n-1)^{d+1}} \\
 &\leq C\left( \sup_{h\leq R^{-1} + R^{\delta-1}} \wh{\Psi}_p(h) h^{-d} \right) + C   \,.
 \end{align*}
Therefore,  
\[
\limsup_{R\to+\infty} \sigma_{p,R}^2 R^{-d} \leq C   + C \limsup_{R\to+\infty}     \wh{\Psi}_p(h) h^{-d} < \infty \,.
\]
This finishes our proof. \qedhere

\end{proof}

  \begin{proof}[Proof of Lemma \ref{lem2.1}]
  Notice that  the condition $f_p \in L^1(\R^{pd})$ implies that $\F f_p$ is uniformly continuous and bounded. We fix a generic $z\in\R^d$, and we write
  \begin{align*}
  |   \Psi_p(x) -  \Psi_p(z) | &  \leq  \int_{\R^{pd-d}}  \Bigg\vert   \vert \F f_p\vert^2\big(\pmb{\xi_{p-1}}, x -\tau( \pmb{\xi_{p-1}}) \big)  
 \varphi\big(x -\tau(  \pmb{\xi_{p-1}})\big)   \\
 &  \qquad -  \vert \F f_p\vert^2\big(\pmb{\xi_{p-1}},  z-\tau( \pmb{\xi_{p-1}})\big)  
 \varphi\big( z-\tau(  \pmb{\xi_{p-1}})\big)   \Bigg\vert   ~
   \prod_{i=1} ^{p-1} \varphi(\xi_i) 
    d \pmb{\xi_{p-1}}  \\
    &  \leq  A_1(x) + A_2(x),
  \end{align*} 
  where
  \begin{align*}
   A_1(x) &:= \int_{\R^{pd-d}}  \Bigg\vert   \vert \F f_p\vert^2  \big(\pmb{\xi_{p-1}}, x -\tau( \pmb{\xi_{p-1}})\big) 
  -  \vert \F f_p\vert^2\big( \pmb{\xi_{p-1}}, z -\tau( \pmb{\xi_{p-1}})\big)   \Bigg\vert \\
&\qquad\qquad \times \varphi\big(x -\tau( \pmb{\xi_{p-1}}) \big) 
   \prod_{i=1} ^{p-1} \varphi(\xi_i) 
    d \pmb{\xi_{p-1}} 
    \end{align*}
    and
  \begin{align*}
  A_2(x) &:= \int_{\R^{pd-d}}  
  \vert \F f_p\vert^2\big(\pmb{\xi_{p-1}},  z-\tau( \pmb{\xi_{p-1}}) \big)   
  \Bigg\vert \varphi\big(x -\tau(\pmb{\xi_{p-1}})\big) - \varphi\big(z-\tau( \pmb{\xi_{p-1}})\big) \Bigg\vert  \\
 &\qquad\qquad\qquad \times   \prod_{i=1} ^{p-1} \varphi(\xi_i) 
    d \pmb{\xi_{p-1}}.
  \end{align*} 
\noindent{\it Estimation of   $A_1$}: We write
    \begin{align*}
    A_1(x)  & \leq  \sup_{ \pmb{\eta_{p-1}} \in \R^{pd-d} }   \Big|  \vert \F f_p\vert^2\big(\pmb{\eta_{p-1}}, x -\tau( \pmb{\eta_{p-1}}) \big) 
  - \vert \F f_p\vert^2\big(\pmb{\eta_{p-1}},  z-\tau( \pmb{\eta_{p-1}}) \big)  \Big| \\
  & \qquad\qquad\qquad \times   \int_{\R^{pd-d}} \varphi\big(x -\tau( \pmb{\xi_{p-1}}) \big) 
   \prod_{i=1} ^{p-1} \varphi(\xi_i)  d \pmb{\xi_{p-1}}.
    \end{align*}
    The first factor  tends to zero as $x\to 0$, due to the uniform continuity of $\F f_p$. We rewrite the second factor as the $p$-convolution $\varphi^{\ast p}(x)  $ and we deduce from \eqref{conv-bdd} that
\[
\big\| \varphi^{\ast p}  \big\| _\infty  \leq    \| \varphi \|_{L^q(\R^d)} ^p.
\]
 Thus,  we obtain that $A_1(x)\to 0$, as $x\to 0$.  Moreover, the previous computations  also lead to 
  \[
  A_1(x) \leq  \big\|  \vert \F f_p\vert^2 \big\| _\infty  \| \varphi \|_{L^q(\R^d)} ^p < \infty \,.
  \]

  \medskip

\noindent{\it Estimation of  $A_2$}: Using the boundedness of  $\F f_p$, we write 
\begin{align*}
  A_2(x) &\leq C \int_{\R^{pd-d}}      \Big\vert \varphi\big(x -\tau(\pmb{\xi_{p-1}})\big) - \varphi\big(z-\tau( \pmb{\xi_{p-1}})\big) \Big\vert   \prod_{i=1} ^{p-1} \varphi(\xi_i)  d \pmb{\xi_{p-1}} \\
  &=  C \int_{\R^d}  dy    \big\vert \varphi (x - y  ) - \varphi(z-y ) \big\vert   \left( \int_{\R^{pd-2d}}  \varphi\big( y - \tau(\pmb{\xi_{p-2}}) \big) \prod_{i=1} ^{p-2} \varphi(\xi_i)   d\pmb{\xi_{p-2}} \right) \\
  &= C\int_{\R^d}     \big\vert \varphi (x - y  ) - \varphi(z-y ) \big\vert  \varphi^{\ast p-1} (y) \, dy \\
  &\leq C\left(\int_{\R^d}     \big\vert \varphi (x - y  ) - \varphi(z-y ) \big\vert^q dy \right)^{1/q}  \|  \varphi^{\ast p-1} \| _{L^p(\R^d)},
    \end{align*}
where we made the change of variables $\pmb{\xi_{p-1}}\to (  \pmb{\xi_{p-2}}, y- \tau(\pmb{\xi_{p-2}})    )$ in the first equality. We know from the proof of \eqref{conv-bdd}  that $ \|  \varphi^{\ast p-1} \| _{L^p(\R^d)} \leq  \| \varphi \|_{L^q(\R^d)} ^{p-1}$, so
  \[
    A_2(x) \le C \| \varphi \|_{L^q(\R^d)} ^{p-1}   \left(\int_{\R^d}  \big\vert \varphi(x -y) - \varphi(z-y) \big\vert^q  dy\right)^{1/q} \xrightarrow{x\to z} 0\,.
  \]
  The above bound also indicates that $A_2$ is uniformly bounded.  
  
  \medskip 
  
 Hence we conclude our proof by combining the above two estimates.    \end{proof}

  \bigskip

  \begin{proof}[Proof of Lemma \ref{lem2.3}] Let us first prove the boundedness.  Since $f_p \in L^1(\R^{pd})$,  $\F f_p$ is uniformly bounded, so that
\begin{align*}
\big\vert \Psi_p^{(r,\delta)}(x,y) \big\vert \leq C  \varphi^{\ast p}(x) \varphi^{\ast p}(y) \leq C \| \varphi\| _{L^q(\R^d)}^{2p} \,, 
\end{align*}
 where the last inequality follows from \eqref{conv-bdd}.     Now let us show the continuity. To ease the presentation, we define 
\begin{align*}
&\mathbf{M}_{x,y}\equiv\mathbf{M}_{x,y}\big(\pmb{\xi_r}, \pmb{\wt{\xi}_{r-1}},\pmb{\eta_{p-r}} , \pmb{\wt{\eta}_{p-r-1}} \big)\\
&= \vert\F f_p\vert^2  \big(\pmb{\eta_{p-r}}, \pmb{\wt{\xi}_{r-1} },  x-  \tau ( \pmb{\wt{\xi}_{r-1}} ) - \tau( \pmb{\eta_{p-r}})  \big)   \vert \F f_p\vert^2\big(\pmb{\wt{\eta}_{p-r-1}}, y-  \tau(\pmb{\xi_r }) - \tau( \pmb{\wt{\eta}_{p-r-1}}),    \pmb{\xi_r} \big) .
\end{align*}
Suppose $x_n, y_n\in\R^d$ converge to $x$ and $y$ respectively, as $n\to+\infty$. Then
\begin{align*}
&\quad \big\vert \Psi_p^{(r,\delta)}(x,y) -  \Psi_p^{(r,\delta)}(x_n,y_n) \big\vert   \\
&\leq   \int_{\R^{2pd-2d}} d\pmb{\xi_r} d\pmb{\wt{\xi}_{r-1}} d\pmb{\eta_{p-r}} d\pmb{\wt{\eta}_{p-r-1}} {\bf 1}_{ \{  \|    \tau ( \pmb{\xi_r} )   + \tau( \pmb{\eta_{p-r}} )   \| < \delta \}       } \left(\prod_{i=1}^{r-1} \varphi(\xi_i)  \varphi(\wt{\xi}_i) \right) \varphi(\xi_r) \varphi(\eta_{p-r})      \\
  &\qquad  \times \left(\prod_{j=1}^{p-r-1} \varphi(\eta_j)  \varphi(\wt{\eta}_j) \right)\Bigg\vert \mathbf{M}_{x,y}  \varphi\big( y - \tau(\pmb{\xi_r}) -\tau(\pmb{\wt{\eta}_{p-r-1}}) \big)  \varphi\big( x - \tau(\pmb{\wt{\xi}_{r-1}}) -\tau(\pmb{\eta_{p-r}}) \big)  \\
  &\qquad -  \mathbf{M}_{x_n, y_n} \varphi\big( y_n - \tau(\pmb{\xi_r}) -\tau(\pmb{\wt{\eta}_{p-r-1}}) \big)  \varphi\big( x_n - \tau(\pmb{\wt{\xi}_{r-1}}) -\tau(\pmb{\eta_{p-r}}) \big)  \Bigg\vert \leq A_{1,n} +  A_{2,n}\,,
  \end{align*}
where  
\begin{align*}
A_{1,n}=&   \int_{\R^{2pd-2d}} d\pmb{\xi_{r}} d\pmb{\wt{\xi}_{r-1}} d\pmb{\eta_{p-r}} d\pmb{\wt{\eta}_{p-r-1}} {\bf 1}_{ \{  \|    \tau ( \pmb{\xi_r} )   + \tau( \pmb{\eta_{p-r}} )   \| < \delta \}       } \left(\prod_{i=1}^{r-1} \varphi(\xi_i)  \varphi(\wt{\xi}_i) \right) \varphi(\xi_r) \varphi(\eta_{p-r})      \\
  &   \times \left(\prod_{j=1}^{p-r-1} \varphi(\eta_j)  \varphi(\wt{\eta}_j) \right)  \varphi\big( y - \tau(\pmb{\xi_r}) -\tau(\pmb{\wt{\eta}_{p-r-1}}) \big)  \varphi\big( x - \tau(\pmb{\wt{\xi}_{r-1}}) -\tau(\pmb{\eta_{p-r}}) \big)  \\
  &\qquad \times \big\vert \mathbf{M}_{x,y} - \mathbf{M}_{x_n, y_n} \big\vert 
    \end{align*}
    and
\begin{align*}
A_{2,n}&=   \int_{\R^{2pd-2d}} d\pmb{\xi_{r}} d\pmb{\wt{\xi}_{r-1}} d\pmb{\eta_{p-r}} d\pmb{\wt{\eta}_{p-r-1}} {\bf 1}_{ \{  \|    \tau ( \pmb{\xi_r} )   + \tau( \pmb{\eta_{p-r}} )   \| < \delta \}       } \left(\prod_{i=1}^{r-1} \varphi(\xi_i)  \varphi(\wt{\xi}_i) \right) \varphi(\xi_r) \varphi(\eta_{p-r})      \\
  &   \times \left(\prod_{j=1}^{p-r-1} \varphi(\eta_j)     \varphi(\wt{\eta}_j) \right)  \mathbf{M}_{x_n, y_n} \Bigg\vert \varphi\big( y - \tau(\pmb{\xi_r}) -\tau(\pmb{\wt{\eta}_{p-r-1}}) \big)  \varphi\big( x - \tau(\pmb{\wt{\xi}_{r-1}}) -\tau(\pmb{\eta_{p-r}}) \big) \\
  &\qquad\qquad\qquad\qquad   - \varphi\big( y_n - \tau(\pmb{\xi_r}) -\tau(\pmb{\wt{\eta}_{p-r-1}}) \big)  \varphi\big( x_n - \tau(\pmb{\wt{\xi}_{r-1}}) -\tau(\pmb{\eta_{p-r}}) \big)  \Bigg\vert\,.
    \end{align*}
It follows immediately from the first part of our proof that 
\[
A_{1,n}\leq C \| \varphi\|_{L^q(\R^d)}^{2p}   \sup\Big\{   \vert  \mathbf{M}_{x_n,y_n} -\mathbf{M}_{x,y}  \vert:\pmb{\xi_r}, \pmb{\wt{\xi}_{r-1}},\pmb{\eta_{p-r}} , \pmb{\wt{\eta}_{p-r-1}} \Big\} \xrightarrow{n\to+\infty} 0\,,
\]
due to the uniform continuity of $\F f_p$.  Now, using   $\| \F f_p\| _\infty<\infty$, we write 
 \begin{align*}
A_{2,n} &\leq  C  \int_{\R^{2pd-2d}} d\pmb{\xi_{r}} d\pmb{\wt{\xi}_{r-1}} d\pmb{\eta_{p-r}} d\pmb{\wt{\eta}_{p-r-1}}   \left(\prod_{i=1}^{r-1} \varphi(\xi_i)  \varphi(\wt{\xi}_i) \right) \varphi(\xi_r) \varphi(\eta_{p-r})      \\
  &    \times \left(\prod_{j=1}^{p-r-1} \varphi(\eta_j)     \varphi(\wt{\eta}_j) \right)    \Bigg\vert \varphi\big( y - \tau(\pmb{\xi_r}) -\tau(\pmb{\wt{\eta}_{p-r-1}}) \big)  \varphi\big( x - \tau(\pmb{\wt{\xi}_{r-1}}) -\tau(\pmb{\eta_{p-r}}) \big) \\
  &       - \varphi\big( y_n - \tau(\pmb{\xi_r}) -\tau(\pmb{\wt{\eta}_{p-r-1}}) \big)  \varphi\big( x_n - \tau(\pmb{\wt{\xi}_{r-1}}) -\tau(\pmb{\eta_{p-r}}) \big)  \Bigg\vert \leq C ( A_{21,n} + A_{22,n}  ), 
    \end{align*}
with
\begin{align*}
&A_{21,n}: =\int_{\R^{2pd-2d}} d\pmb{\xi_{r}} d\pmb{\wt{\xi}_{r-1}} d\pmb{\eta_{p-r}} d\pmb{\wt{\eta}_{p-r-1}}   \left(\prod_{i=1}^{r-1} \varphi(\xi_i)  \varphi(\wt{\xi}_i) \right) \varphi(\xi_r) \varphi(\eta_{p-r})      \\
  & \quad  \times \left(\prod_{j=1}^{p-r-1} \varphi(\eta_j)     \varphi(\wt{\eta}_j) \right)    \Bigg\vert \varphi\big( y - \tau(\pmb{\xi_r}) -\tau(\pmb{\wt{\eta}_{p-r-1}}) \big) - \varphi\big( y_n - \tau(\pmb{\xi_r}) -\tau(\pmb{\wt{\eta}_{p-r-1}}) \big)   \Bigg\vert \\
  &\qquad \times \varphi\big( x - \tau(\pmb{\wt{\xi}_{r-1}}) -\tau(\pmb{\eta_{p-r}}) \big) \\
  &= \varphi^{\ast p}(x) \int_{\R^{pd-d}} d\pmb{\xi_{p-1}}     \left(\prod_{i=1}^{p-1} \varphi(\xi_i)\right) \Big\vert \varphi\big( y - \tau(\pmb{\xi_{p-1}})   \big) - \varphi\big( y_n -\tau(\pmb{\xi_{p-1}})  \big)   \Big\vert 
  \end{align*}
and smilarly,
\begin{align*}
A_{22,n}:= \varphi^{\ast p}(y_n) \int_{\R^{pd-d}} d\pmb{\xi_{p-1}}     \left(\prod_{i=1}^{p-1} \varphi(\xi_i)\right) \Big\vert \varphi\big( x - \tau(\pmb{\xi_{p-1}})   \big) - \varphi\big( x_n -\tau(\pmb{\xi_{p-1}})  \big)   \Big\vert \,.
\end{align*}
Put $\varphi_y(x) = \varphi(x-y)$, so we can rewrite 
\[
 \int_{\R^{pd-d}} d\pmb{\xi_{p-1}}     \left(\prod_{i=1}^{p-1} \varphi(\xi_i)\right) \Big\vert \varphi\big( x - \tau(\pmb{\xi_{p-1}})   \big) - \varphi\big( x_n -\tau(\pmb{\xi_{p-1}})  \big)   \Big\vert  
 \]
as $\big( \varphi^{\ast p-1} \ast \vert \varphi_{-x} -\varphi_{-x_n }\vert \big) (0) $, which is bounded by 
\[
\big\| \varphi^{\ast p-1} \big\| _{L^p(\R^d)}  \| \varphi_{-x} -\varphi_{-x_n }\|_{L^q(\R^d)} \leq  \big\| \varphi\big\| _{L^q(\R^d)}^{ p-1 } \| \varphi_{-x} -\varphi_{-x_n }\|_{L^q(\R^d)} \xrightarrow{n\to+\infty} 0\,,
\]
that is, $A_{22,n}\to 0$, as $n\to+\infty$. The same arguments also imply that $A_{21,n}\to 0$, as $n\to+\infty$. This concludes our proof.\qedhere

\end{proof}

\begin{lemma} Let $\varphi_1$ be given as in  Theorem \ref{thmSHE}. Then   for any $x,y\in \R^d$ and $s>0$, we have 
\begin{equation} \label{ecu1}
\int_{\R^d} e^{-s \| \eta\|^2}  \varphi_1(\eta -x) \varphi_1(y-\eta) d\eta \le  \int_{\R^d} e^{-s \| \eta\|^2}  \varphi_1^2(\eta)d\eta
\end{equation}
and
\begin{align}\label{ecuu1}
\int_{\R^d} e^{-s \| \eta\|^2}  \varphi_1(\eta -x) d\eta \le  \int_{\R^d} e^{-s \| \eta\|^2}  \varphi_1(\eta)d\eta.
\end{align}

\end{lemma}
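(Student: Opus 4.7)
The plan is a Fourier-analytic argument: both inequalities reduce to the fact that the convolution of a Gaussian with a function whose Fourier transform is a nonnegative measure attains its maximum at the origin. The nonnegativity input in both cases is the covariance kernel $\gamma_1=\F\mu_1\ge 0$.

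For \eqref{ecuu1}, I will substitute the Gaussian Fourier identity
\[
e^{-s\|\eta\|^2}=(4\pi s)^{-d/2}\int_{\R^d}e^{-\|\xi\|^2/(4s)}e^{-i\eta\cdot\xi}\,d\xi
\]
into the left-hand side $I(x)$, interchange the order of integration by Fubini, and evaluate the inner $\eta$-integral: after the substitution $\eta\mapsto\eta-x$ it produces $e^{-ix\cdot\xi}\gamma_1(\xi)$, by the defining identity $\gamma_1(\xi)=\int e^{-i\eta\cdot\xi}\varphi_1(\eta)\,d\eta$. This yields
\[
I(x)=(4\pi s)^{-d/2}\int_{\R^d}e^{-\|\xi\|^2/(4s)}e^{-ix\cdot\xi}\gamma_1(\xi)\,d\xi.
\]
Since $I(x)\in\R$, I can take real parts to replace $e^{-ix\cdot\xi}$ by $\cos(x\cdot\xi)\le 1$; combined with nonnegativity of $\gamma_1$ and of the Gaussian weight, this bounds $I(x)$ by the same expression at $x=0$, namely $I(0)$.

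For \eqref{ecu1}, I will first use the evenness $\varphi_1(y-\eta)=\varphi_1(\eta-y)$ and apply Cauchy--Schwarz with respect to the measure $e^{-s\|\eta\|^2}\,d\eta$, obtaining
\[
\text{LHS}\le K(x)^{1/2}K(y)^{1/2},\qquad K(a):=\int_{\R^d}e^{-s\|\eta\|^2}\varphi_1(\eta-a)^2\,d\eta.
\]
It then suffices to prove $K(a)\le K(0)$ for every $a$, which is precisely the preceding argument applied with $\psi:=\varphi_1^2$ in place of $\varphi_1$. The one new ingredient to verify is nonnegativity of $\F\psi$, which follows from the convolution theorem and nonnegativity of $\gamma_1$: $\F(\varphi_1^2)=(2\pi)^{-d}(\gamma_1\ast\gamma_1)\ge 0$ as the convolution of the nonnegative measure $\gamma_1$ with itself.

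The main technical obstacle is that the Fubini interchange requires $\varphi_1\in L^1(\R^d)$; this is the case in the setting of Theorem~\ref{thmSHE}, since $\gamma_1(\R^d)<\infty$ forces $\int\varphi_1=\mu_1(\R^d)<\infty$, but it \emph{fails} for the Riesz regime $\varphi_1(\xi)\sim\|\xi\|^{\beta-d}$ of Theorem~\ref{Nchaotic}(2). I will bypass this by regularising: set $\varphi_1^\epsilon(\eta):=\varphi_1(\eta)e^{-\epsilon\|\eta\|^2}$, which lies in $L^1(\R^d)$ for any $\epsilon>0$ and whose Fourier transform $(2\pi)^{-d}\gamma_1\ast\F(e^{-\epsilon\|\cdot\|^2})$ is still nonnegative, so the Fourier argument applies to $\varphi_1^\epsilon$. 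Monotone convergence as $\epsilon\downarrow 0$ (valid because $\varphi_1^\epsilon\uparrow\varphi_1$ and all integrands are nonnegative) then transfers both inequalities to $\varphi_1$ itself; the analogous regularisation of $\varphi_1^2$ handles \eqref{ecu1}.
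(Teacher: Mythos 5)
Your argument is correct and follows essentially the same route as the paper's proof: the paper also reduces \eqref{ecu1} to the diagonal case by Cauchy--Schwarz and the symmetry of $\varphi_1$, and then applies Plancherel, exploiting that the Fourier transform of the Gaussian weight is nonnegative, that $\gamma_1\ge 0$ (resp.\ $\gamma_1\ast\gamma_1\ge 0$ for the squared density), and that the resulting phase factor has modulus at most one --- which is exactly your forward-Fourier computation, with your Gaussian regularisation supplying the Fubini justification that the paper leaves implicit. One small correction to a side remark: $\gamma_1(\R^d)<\infty$ yields boundedness and continuity of $\varphi_1$ but not $\varphi_1\in L^1(\R^d)$ (that integral is $\mu_1(\R^d)=\gamma_1(0)$, which can be infinite, e.g.\ for Bessel-type kernels), so the regularisation is needed even under the hypotheses of Theorem \ref{thmSHE}; fortunately your argument already covers this case.
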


\begin{proof} It suffices to prove it for  $x=y$, as the general case follows from the Cauchy-Schwarz inequality and symmetry of $\varphi_1$.

Put $h(\eta)=e^{-s \| \eta\|^2} $, then its Fourier transform $\F h$ is a nonnegative function.  Then,  we write, using Plancherel's identity and the fact $\varphi_1^2 = \frac{1}{(2\pi)^{2d}} \F (\gamma_1\ast \gamma_1)$
\begin{align*}
& \int_{\R^d} h(\eta)  \varphi_1(\eta -x)^2  d\eta = \int_{\R^d} h(\eta+x) \frac{1}{(2\pi)^{2d}} \F (\gamma_1\ast \gamma_1)(\eta)  d\eta  \\
&=  \int_{\R^d} (\F h)(a) e^{\i ax}  \frac{1}{(2\pi)^{2d}}  (\gamma_1\ast \gamma_1)(a)  da     \quad (\text{$\gamma_1$ is also nonnegative})  \\
&\leq  \int_{\R^d} (\F h)(a)  \frac{1}{(2\pi)^{2d}}  (\gamma_1\ast \gamma_1)(a)  da   = \int_{\R^d} h(\eta)  \varphi_1(\eta )^2  d\eta\,,
\end{align*}
which proves \eqref{ecu1}. The same argument also leads easily  to \eqref{ecuu1}. 
\end{proof}

 {\noindent\bf Acknowledgement:} We thank the anonymous referee for suggesting us to add a functional version of Theorem \ref{Nchaotic}, which is the content of Theorem \ref{NCF}.
   
   \vspace*{-0.2cm}

\end{document}